\title{Ranks based on strong amalgamation Fra\"{i}ss\'{e} classes}
\author{Vincent Guingona}
\author{Miriam Parnes}
\address{Towson University}
\email{vguingona@towson.edu}
\urladdr{https://tigerweb.towson.edu/vguingona/}
\email{mparnes@towson.edu}
\date{\today}
\thanks{Special thanks to C. D. Hill, D. Ulrich, and the anonymous reviewer. \\ \indent 2010 \emph{Mathematics Subject Classification}. 03C45.}
\def\papermode{1}
\newcommand{\analgtriv}{a strong amalgamation }
\newcommand{\algtriv}{strong amalgamation }
\newcommand{\AlgTriv}{Strong Amalgamation }
\newtheorem{thm}{Theorem}[section]
\newtheorem{cor}[thm]{Corollary}
\newtheorem{lem}[thm]{Lemma}
\newtheorem{prop}[thm]{Proposition}
\newtheorem*{claim*}{Claim}
\newtheorem*{caseone}{Case 1}
\newtheorem*{casetwo}{Case 2}
\newtheorem{ques}[thm]{Question}
\theoremstyle{remark}
\newtheorem{rem}[thm]{Remark}
\newtheorem{expl}[thm]{Example}
\theoremstyle{definition}
\newtheorem{defn}[thm]{Definition}
\newcommand{\tp}{\mathrm{tp} }
\newcommand{\qftp}{\mathrm{qftp} }
\newcommand{\concat}{{}^{\frown} }
\newcommand{\sig}{\mathrm{sig} }
\newcommand{\acl}{\mathrm{acl} }
\newcommand{\fraisse}{Fra\"{i}ss\'{e} }
\newcommand{\KK}{\mathbf{K} }
\newcommand{\LO}{\mathbf{LO} }
\newcommand{\GG}{\mathbf{G} }
\newcommand{\HH}{\mathbf{H} }
\newcommand{\EE}{\mathbf{E} }
\newcommand{\BS}{\mathbf{S} }
\newcommand{\TT}{\mathbf{T} }
\newcommand{\Mon}{\mathfrak{C} }
\newcommand{\cC}{\mathcal{C} }
\newcommand{\cD}{\mathcal{D} }
\newcommand{\cS}{\mathcal{S} }
\newcommand{\cG}{\mathcal{G} }
\newcommand{\cGs}{\mathcal{G}_{\mathrm{s}} }
\newcommand{\cGns}{\mathcal{G}_{\mathrm{ns}} }
\newcommand{\arity}{\mathrm{arity} }
\newcommand{\oa}{\overline{a} }
\newcommand{\ob}{\overline{b} }
\newcommand{\oc}{\overline{c} }
\newcommand{\od}{\overline{d} }
\newcommand{\oj}{\overline{j} }
\newcommand{\ox}{\overline{x} }
\newcommand{\oy}{\overline{y} }
\newcommand{\oz}{\overline{z} }
\newcommand{\ee}{\overline{e} }
\newcommand{\ozero}{\overline{0} }
\newcommand{\Rk}{\mathrm{Rk} }
\newcommand{\dpRk}{\mathrm{dpRk} }
\newcommand{\opDim}{\mathrm{opDim} }
\newcommand{\lex}{\mathrm{lex} }
\newcommand{\Pow}{\mathcal{P} }
\newcommand{\image}{\mathrm{im} }
\newcommand{\sstar}{*}
\newcommand{\selfsim}{definably self-similar}
\newcommand{\generic}{fully relational}
\newcommand{\mathiff}{\Longleftrightarrow }
\newcommand{\condiff}{\text{iff } }
\begin{document}

\begin{abstract}
 In this paper, we introduce the notion of $\KK$-rank, where $\KK$ is \analgtriv \fraisse class.  Roughly speaking, the $\KK$-rank of a partial type is the number ``copies'' of $\KK$ that can be ``independently coded'' inside of the type.  We study $\KK$-rank for specific examples of $\KK$, including linear orders, equivalence relations, and graphs.  We discuss the relationship of $\KK$-rank to other ranks in model theory, including dp-rank and op-dimension (a notion coined by the first author and C. D. Hill in previous work).
\end{abstract}

\maketitle


\section{Introduction}

In model theory, there are many different notions of dimension and rank that are used to measure the complexity of partial types in first-order theories.  Some of these notions of rank involve measuring the largest ``size'' of a certain combinatorial configuration that exists in the type.  For example, the dp-rank of a partial type is the largest depth of an ICT-pattern in the type (see Definition \ref{defn_dprank}).  Ideally, one would like a general framework that simultaneously captures various combinatorial notions of rank together in a single unified notion.

In a modest step towards that goal, we introduce a novel class of ranks we call \emph{$\KK$-rank} for \analgtriv \fraisse class, $\KK$.  The idea is to concretely codify the notion of a ``combinatorial configuration'' by using $\KK$-configurations (see Definition \ref{Defn_KKConfig}).  Roughly speaking the $\KK$-rank of a partial type counts the maximum number of ``copies'' of $\KK$ that can be ``independently coded'' in the type.  More formally, ``coding'' is captured by the notion of $\KK$-configuration and the number of ``copies'' is captured by iterative free superpositions (see Definition \ref{Defn_StarOp}), which leads to the notion of $\KK$-rank (see Definition \ref{Defn_KKRank}).

In some instances, $\KK$-rank does generalize known notions of model theoretic rank.  For example, if $\KK$ is the class of finite linear orders, then $\KK$-rank (linear order rank) generalizes dp-rank in distal theories (see Example \ref{Expl_LONIP}).  This is a consequence of the fact that linear order rank generalizes op-dimension (see Definition \ref{Defn_OPDim}) on theories without the independence property (i.e., NIP theories); see Proposition \ref{Prop_NIPLOopdim}.  The notion of op-dimension was first introduced by the first author and C. D. Hill in \cite{gh2} and has since been utilized in other model theoretic studies; e.g., \cite{sim}.  As another example, if $\KK$ is the class of all finite sets with a single equivalence relation, then $\KK$-rank is bounded by the dp-rank; see Proposition \ref{Prop_EEUpperBound}.  Both dp-rank and op-dimension are additive \cites{gh2, kou} (see Definition \ref{Defn_Additive}); we examine under what conditions, on both $\KK$ and the target theory, $\KK$-rank is additive.  From this analysis on the specific class $\KK$ of all finite linear orders, we derive a result which may be of independent interest: We give a new characterization of NIP for certain theories based on the growth rate of $\KK$-rank; see Theorem \ref{Thm_NIPLinearQuadLO}.

The original idea for the ``coding'' part of this work comes from a paper by the first author and Hill \cite{gh}, where they study a related notion called positive local combinatorial dividing lines.  The requirements on the \fraisse classes considered in that paper are more stringent; specifically, they are required to be indecomposable (see Section 1 of \cite{gh}).  In the current paper, we do not make this assumption.  The notion of ``coding'' in this manner is also related to the phenomenon of non-collapsing generalized indiscernibles, studied by the first author, Hill, and L. Scow in \cite{ghs}; a detailed explanation of this relationship may be found in Section 3 of \cite{gh}.  When building generalized indiscernibles indexed by a class $\KK$, one needs to assume $\KK$ has the Ramsey property.  However, certain useful uniformity aspects of indiscernibility may still exist in the absence of the Ramsey property.  First, we develop a ``pseudo-indiscerniblity'' when the index class is ``nice'' (see Proposition \ref{Prop_WeakGoodEmbedding} and Proposition \ref{Prop_GoodEmbedding}).  We then utilize this in type-counting arguments (e.g., Proposition \ref{Prop_KKquad2}).

This paper is organized as follows:  In Section \ref{Sect_Prelim}, we introduce the notation used in the paper and cover some basic definitions.  In Section \ref{Sect_ATFC}, we discuss the relevant concepts surrounding \algtriv \fraisse classes.  Primarily, we discuss the notion of free superposition, which formalizes the idea of ``independent copies'' of a \fraisse class.  In Section \ref{Sect_Config}, we study the notion of configurations, which formalizes the idea of ``coding'' a \fraisse class into a partial type.  In Section \ref{Sect_DividingLines}, we connect the work in this paper back to the dividing lines considered in \cite{gh}.  In particular, we discuss an interesting generalization of a few results from that paper.  In Section \ref{Sect_Ranks}, we define and examine $\KK$-ranks for various \algtriv \fraisse classes, $\KK$.  In Subsection \ref{Subsect_LOR}, we study $\KK$-rank where $\KK$ is the class of all finite linear orders, in Subsection \ref{Subsect_EQR}, we study $\KK$-rank where $\KK$ is the class of all finite sets with a single equivalence relation, and in Subsection \ref{Subsect_GR}, we study $\KK$-rank where $\KK$ is the class of all finite graphs.  We study each of these $\KK$-ranks for types in the theory of the random graph in Subsection \ref{Subsect_RandGraph} and, in Subsection \ref{Subsect_RankDivide}, we explore the additivity of some ranks in theories without the independence property.  Finally, in Section \ref{Sect_Conclusion}, we discuss some interesting open problems.


\section{Preliminaries}\label{Sect_Prelim}

Let $L$ be a first-order language.  By the \emph{signature} of $L$, denoted $\sig(L)$, we mean the set of constant symbols, function symbols, and relation symbols used in $L$.  We say $L$ is \emph{finite relational} if $\sig(L)$ is finite and consists only of relation symbols.  For a relation symbol $R \in \sig(L)$, we denote the arity of $R$ by $\arity(R)$.  If $M$ is an $L$-structure and $A \subseteq M$, we let $L(A)$ denote the language that expands $L$ by adding constant symbols to the signature for each $a \in A$.  Abusing notation, also let $L(A)$ also denote the set of $L(A)$-formulas.  For languages $L$ and $L_0$, we say $L_0$ is a \emph{reduct} of $L$ if $\sig(L_0) \subseteq \sig(L)$.  If $M$ is an $L$-structure and $L_0$ is a reduct of $L$, let $M |_{L_0}$ denote the reduct of $M$ to $L_0$.  If $M$ and $N$ are $L$-structures, write $M \cong_L N$ to mean that $M$ and $N$ are isomorphic as $L$-structures (where we drop the $L$ if it is clear).

In this paper, we will often be working with two first-order theories in different languages simultaneously, the ``index'' theory and the ``target'' theory.  Typically, the index theory will be the \fraisse limit of \analgtriv \fraisse class (see Definition \ref{Defn_ATFC}) in a finite relational language and the target theory will be an arbitrary complete first-order theory in an arbitrary language.

On the target side, suppose that $T$ is a complete first-order theory in a language $L$.  We use $\Mon$ to denote the \emph{monster model} of $T$; in this paper, it suffices to take $\Mon$ to be any model of $T$ that is at least $\aleph_1$-saturated.  We will also consider partial types $\pi(\oy)$, which are consistent collections of $L(A)$-formulas with free variables $\oy$ for some $A \subseteq \Mon$.  In this paper, we only consider such partial types over a \emph{small} $A$ (i.e., $A$ is smaller than the saturation of $\Mon$).  For a partial type $\pi$ and $M$ a substructure of $\Mon$, let $\pi(M)$ denote the set of all realizations of $\pi$ from $M$.  If $\varphi$ is a formula and $P$ is some property, then we write $\varphi^{\condiff P}$ to denote the formula $\varphi$ if $P$ is true and $\neg \varphi$ if $P$ is false.  If $t < 2$, we will write $\varphi^t$ to denote $\varphi^{\condiff t = 1}$.

For the following two definitions, let $T$ be a complete, first-order theory in a language $L$, let $\Mon$ be a monster model of $T$, and let $\pi(\oy)$ be a partial type.  We define two notions of rank that we will consider in this paper, \emph{dp-rank} and \emph{op-dimension}.  For simplicity of presentation, we will only consider $\omega$-valued dp-ranks and op-dimensions (generally, these can be defined to be ordinal-valued).

\begin{defn}\label{defn_dprank}
 Let $m < \omega$ and $\beta$ be an ordinal.  We say that $\pi$ has an \emph{ICT-pattern} of depth $m$ and length $\beta$ if there exist $L(\Mon)$-formulas $\varphi_i(\oy, \oz_i)$ for $i < m$ and $\oc_{i,j} \in \Mon^{|\oz_i|}$ for $i < m$ and $j < \beta$ such that, for all $g : m \rightarrow \beta$, the partial type
 \[ 
  \pi(\oy) \cup \{ \varphi_i(\oy, \oc_{i,j})^{\condiff g(i) = j} : i < m, j < \beta \}
 \]
 is consistent.  The \emph{dp-rank} of $\pi$ is the maximum $m < \omega$ such that $\pi$ has an ICT-pattern of depth $m$ and length $\omega$.  We denote the dp-rank of $\pi$ by $\dpRk(\pi)$.
\end{defn}

\begin{defn}\label{Defn_OPDim}
 Let $m < \omega$ and $\beta$ be an ordinal.  We say that $\pi$ has an \emph{IRD-pattern} of depth $m$ and length $\beta$ if there exist $L(\Mon)$-formulas $\varphi_i(\oy, \oz_i)$ for $i < m$ and $\oc_{i,j} \in \Mon^{|\oz_i|}$ for $i < m$ and $j < \beta$ such that, for all $g : m \rightarrow \beta$, the partial type
 \[ 
  \pi(\oy) \cup \{ \varphi_i(\oy, \oc_{i,j})^{\condiff g(i) < j} : i < m, j < \beta \}
 \]
 is consistent.  The \emph{op-dimension} of $\pi$ is the maximum $m < \omega$ such that $\pi$ has an IRD-pattern of depth $m$ and length $\omega$.  We denote the op-dimension of $\pi$ by $\opDim(\pi)$.
\end{defn}

In this paper, we attempt to generalize ``combinatorial patterns,'' like ICT-patterns or IRD-patterns, in order to define a generalized notion of rank.  To do this, we view the patterns as coming from an ``index'' theory.

On the index side, let $L_0$ be a finite relational language and let $\KK$ be a class of finite $L_0$-structures closed under isomorphism.
\begin{itemize}
 \item We say that $\KK$ has the \emph{hereditary property} if, for all $B \in \KK$ and $A \subseteq B$, $A \in \KK$.
 \item We say that $\KK$ has the \emph{joint embedding property} if, for all $A_0, A_1 \in \KK$, there exist $B \in \KK$ and embeddings $f_t : A_t \rightarrow B$ for each $t < 2$.
 \item We say that $\KK$ has the \emph{amalgamation property} if, for all $A, B_0, B_1 \in \KK$ and embeddings $f_t : A \rightarrow B_t$ for each $t < 2$, there exist $C \in \KK$ and embeddings $g_t : B_t \rightarrow C$ such that $g_0 \circ f_0 = g_1 \circ f_1$.
 \item We say that $\KK$ is a \emph{\fraisse class} if it has the hereditary property, the joint embedding property, and the amalgamation property.
\end{itemize}
The \emph{\fraisse limit} of a \fraisse class $\KK$ is the unique (up to isomorphism) countable $L_0$-structure $\Gamma$ such that $\Gamma$ is ultrahomogeneous and $\KK$ is the class of all finite structures embeddable into $\Gamma$ (see Theorem 6.1.2 of \cite{littlehodges}).  Since $L_0$ is a finite relational language, the theory of the \fraisse limit of $\KK$ is $\aleph_0$-categorical and eliminates quantifiers (see Theorem 6.4.1 of \cite{littlehodges}).  Abusing terminology, we will typically say that $\KK$ has a certain property if its \fraisse limit does.  This allows us to avoid writing the phrase ``whose \fraisse limit satisfies'' throughout the paper.

In this paper, we will be interested in ``coloring properties'' of the limits of \fraisse classes.  The following two definitions can be found in, for example, \cite{ezs}; here, we have rephrased them to be about colorings of the \fraisse limit.

\begin{defn}\label{Defn_Indivisible}
 Let $\KK$ be a \fraisse class with \fraisse limit $\Gamma$.  We say that $\KK$ is \emph{indivisible} if, for all $k < \omega$ and $c : \Gamma \rightarrow k$, there exist $\Gamma' \subseteq \Gamma$ with $\Gamma' \cong \Gamma$ and $i < k$ such that
 \[
  c(\Gamma') = \{ i \}.
 \]
\end{defn}

\begin{defn}\label{Defn_AgeIndivisible}
 Let $\KK$ be a \fraisse class with \fraisse limit $\Gamma$.  We say that $\KK$ is \emph{age indivisible} if, for all $k < \omega$, all $c : \Gamma \rightarrow k$, and all $A \in \KK$, there exist an embedding $f : A \rightarrow \Gamma$ and $i < k$ such that $c(f(A)) = \{ i \}$.
\end{defn}

It is clear that indivisibility implies age indivisibility.

In the next section, we will study these coloring properties in the context of \algtriv \fraisse classes.


\section{\AlgTriv \fraisse Classes}\label{Sect_ATFC}

In this section, we define the notion of \algtriv \fraisse class.  We then explore the free superposition and its relationship to some properties of \algtriv \fraisse classes.

Fix $L_0$ a finite relational language and let $\KK$ be a \fraisse class in $L_0$.  We can strengthen the amalgamation property as follows: We say $\KK$ satisfies the \emph{strong amalgamation property} if, for all $A, B_0, B_1 \in \KK$ and embeddings $f_t : A \rightarrow B_t$ for each $t < 2$, there exist $C \in \KK$ and embeddings $g_t : B_t \rightarrow C$ such that $g_0 \circ f_0 = g_1 \circ f_1$ and $g_0(B_0) \cap g_1(B_1) = g_0(f_0(A))$.  Since the language is relational, we may assume that the empty structure is in $\KK$, so we obtain a ``strong'' joint embedding property from the strong amalgamation property.  Moreover, if $\Gamma$ is the \fraisse limit of $\KK$, then $\KK$ has the strong amalgamation property if and only if, for all $A \subseteq \Gamma$, $\acl(A) = A$; see (2.15) of \cite{cameron_1990}.

\begin{defn}\label{Defn_ATFC}
 Let $\KK$ be a \fraisse class in a finite relational language.  We say that $\KK$ is a \emph{\algtriv \fraisse class} if it satisfies the strong amalgamation property.
\end{defn}

For each $t < 2$, let $\KK_t$ be a class of finite $L_t$-structures, where $L_t$ is a finite relational language.  Let $L_2$ be the language whose signature is the disjoint union of the signatures of $L_0$ and $L_1$ and define the \emph{free superposition} of $\KK_0$ and $\KK_1$, denoted $\KK_0 \sstar \KK_1$, as the class of all finite $L_2$-structures $A$ such that $A |_{L_t} \in \KK_t$ for each $t < 2$.

\begin{rem}\label{Rem_Gluing}
 Suppose that $A \in \KK_0$, $B \in \KK_1$, and $f : A \rightarrow B$ is a bijection.  Then, we can ``glue'' $A$ and $B$ together via $f$ to make an element of $\KK_0 \sstar \KK_1$.  Formally, let $C$ be the $L_2$-structure with universe $A$ such that, for all $R \in \sig(L_2)$ and $\oa \in A^{\arity(R)}$,
 \begin{itemize}
  \item if $R \in \sig(L_0)$, then $C \models R(\oa) \mathiff A \models R(\oa)$, and
  \item if $R \in \sig(L_1)$, then $C \models R(\oa) \mathiff B \models R(f(\oa))$.
 \end{itemize}
 Then, clearly $C \in \KK_0 \sstar \KK_1$.  Indeed, $C |_{L_0} = A$ and $C |_{L_1} \cong_{L_1} B$.
\end{rem}

\begin{prop}[Lemma 3.22 of \cite{bod2}]\label{Prop_ATFCstar}
 If $\KK_0$ and $\KK_1$ are \algtriv \fraisse classes, then $\KK_0 \sstar \KK_1$ is \analgtriv \fraisse class.
\end{prop}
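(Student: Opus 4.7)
The plan is to verify the three defining properties of an algebraically trivial \fraisse class directly for $\KK_0 \sstar \KK_1$: hereditary property, joint embedding property, and strong amalgamation property. Since the language is relational and we may assume $\emptyset \in \KK_0 \sstar \KK_1$, joint embedding follows as the degenerate case of strong amalgamation over the empty structure, so the real content is hereditary property and strong amalgamation.

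Hereditary property is immediate: if $A \in \KK_0 \sstar \KK_1$ and $B \subseteq A$, then $B |_{L_t} \subseteq A |_{L_t} \in \KK_t$, so $B |_{L_t} \in \KK_t$ by hereditary property in each $\KK_t$, hence $B \in \KK_0 \sstar \KK_1$.

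For strong amalgamation, suppose $A, B_0, B_1 \in \KK_0 \sstar \KK_1$ with embeddings $f_t : A \to B_t$ for $t < 2$. Taking reducts yields embeddings $f_t : A |_{L_s} \to B_t |_{L_s}$ in each component $\KK_s$. Apply strong amalgamation in $\KK_0$ to obtain $C^0 \in \KK_0$ with embeddings $g_t^0 : B_t |_{L_0} \to C^0$ such that $g_0^0 \circ f_0 = g_1^0 \circ f_1$ and $g_0^0(B_0 |_{L_0}) \cap g_1^0(B_1 |_{L_0}) = g_0^0(f_0(A |_{L_0}))$. Since the language is relational, we may transport $C^0$ onto the set-theoretic pushout $P := (B_0 \sqcup B_1)/{\sim}$, where $f_0(a) \sim f_1(a)$ for all $a \in A$, so that the embeddings $g_t^0$ become the canonical inclusions $B_t \hookrightarrow P$. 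Applying strong amalgamation in $\KK_1$ analogously and transporting onto the same set $P$, we obtain $C^1 \in \KK_1$ with universe $P$ and the same inclusion maps as the canonical embeddings.

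Now let $C$ be the unique $L_2$-structure on $P$ with $C |_{L_0} = C^0$ and $C |_{L_1} = C^1$; by construction $C \in \KK_0 \sstar \KK_1$. The canonical maps $g_t : B_t \to C$ are embeddings in both reducts, hence are $L_2$-embeddings; they satisfy $g_0 \circ f_0 = g_1 \circ f_1$ by the pushout property; and $g_0(B_0) \cap g_1(B_1) = g_0(f_0(A))$ by the disjointness in the pushout. The only delicate point in the argument is ensuring that the two strong amalgams in $\KK_0$ and $\KK_1$ can be realized on a common underlying set; this is the main (minor) obstacle and is handled by exploiting the freedom in strong amalgamation, in a relational language, to choose any representative for the amalgam up to isomorphism, in particular the set-theoretic pushout.
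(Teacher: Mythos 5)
Your proof is correct and follows essentially the same route as the paper: amalgamate strongly in each reduct separately, then identify the two amalgams on a common underlying set (your set-theoretic pushout $P$ plays the role of the paper's bijection $h : C_0 \rightarrow C_1$ commuting with the embeddings) and glue the two structures as in Remark \ref{Rem_Gluing}. The only point worth making explicit is that before transporting $C^s$ onto $P$ you should shrink it to the substructure on $g_0^s(B_0) \cup g_1^s(B_1)$ via the hereditary property so the cardinalities match, which is the same adjustment the paper makes when arranging $|C_0| = |C_1|$.
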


Although the result is known, we give a proof here, as it will help in the proof of Proposition \ref{Prop_SelfSimStar} below.

\begin{proof}
 We begin by exhibiting the strong amalgamation property.  Fix structures $A, B_0, B_1 \in \KK_0 \sstar \KK_1$ and fix $L_2$-embeddings $f_0 : A \rightarrow B_0$ and $f_1 : A \rightarrow B_1$.  In particular, $f_0$ and $f_1$ are both $L_s$-embeddings for each $s < 2$.  By the strong amalgamation property of $\KK_s$, there exist $C_s \in \KK_s$ and $L_s$-embeddings $g^s_t : B_t \rightarrow C_s$ for $t < 2$ such that $g^s_0 \circ f_0 = g^s_1 \circ f_1$ and $g^s_0(B_0) \cap g^s_1(B_1) = g^s_0(f_0(A))$.  By embedding into a larger structure and using the hereditary property, we may assume that $|C_0| = |C_1|$.  There exists a bijection $h : C_0 \rightarrow C_1$ such that the following diagram commutes:
 \begin{center}
  \begin{tikzpicture}
   \draw (0,0) node {$A$};
   \draw[->] (0.25,0.25) -- (0.5,0.5) node[anchor = south east] {$f_0$} -- (0.75,0.75);
   \draw (1,1) node {$B_0$};
   \draw[->] (0.25,-0.25) -- (0.5,-0.5) node[anchor = north east] {$f_1$} -- (0.75,-0.75);
   \draw (1,-1) node {$B_1$};
   \draw[->] (1.3,1) -- (2,1) node[anchor = south] {$g_0^0$} -- (2.7,1);
   \draw[->] (1.25,0.8) -- (2.75,-0.8);
   \draw (1.8,0.7) node {$g_0^1$};
   \draw[->] (1.3,-1) -- (2,-1) node[anchor = north] {$g_1^1$} -- (2.7,-1);
   \draw[->] (1.25,-0.8) -- (2.75,0.8);
   \draw (1.8,-0.6) node {$g_1^0$};
   \draw (3,1) node {$C_0$};
   \draw (3,-1) node {$C_1$};
   \draw[->] (3,0.7) -- (3,0) node[anchor = west] {$h$} -- (3,-0.7);
  \end{tikzpicture}
 \end{center}
 As in Remark \ref{Rem_Gluing}, endow $C_0$ with an $L_2$-structure via $h$ and call it $C_2$.
 
 To exhibit the hereditary property, fix $B \in \KK_0 \sstar \KK_1$ and let $A \subseteq B$.  In particular, $A |_{L_0}$ is a $L_0$-substructure of $B |_{L_0}$, so $A |_{L_0} \in \KK_0$.  Similarly, $A |_{L_1} \in \KK_1$.  Thus, $A \in \KK_0 \sstar \KK_1$.
\end{proof}

\begin{expl}
 Note that the strong amalgamation property is necessary to conclude that the free superposition is even a \fraisse class.  For example, for each $t < 2$, let $L_t$ be the language with one unary predicate, $P_t$, and let $\KK_t$ be the class of all $L_t$-structures where at most one element satisfies $P_t$.  This is clearly a \fraisse class, but does not have strong amalgamation.  On the other hand, $\KK_0 \sstar \KK_1$ is not a \fraisse class, as it fails joint embedding.  Let $A_0 = \{ a_0, a_1 \}$ where $P_0(a_0)$ and $P_1(a_1)$ and let $A_1 = \{ a_2 \}$ where $P_0(a_2)$ and $P_1(a_2)$.  Then, there exists no $B \in \KK_0 \sstar \KK_1$ which embeds $A_0$ and $A_1$ simultaneously.
\end{expl}

\begin{defn}\label{Defn_PropertiesofKK}
 Let $\KK$ be \analgtriv \fraisse class in $L_0$, $A \in \KK$, and $R$ a relation of $L_0$ with arity $n$.
 \begin{enumerate}
  \item We say $R$ is \emph{symmetric} on $A$ if, for all $a \in {}^n A$ and all $\sigma \in S_n$, if $A \models R(a)$, then $A \models R(a \circ \sigma)$.
  \item We say $R$ is \emph{trichotomous} on $A$ if, for all $a \in {}^n A$ such that $a(i) \neq a(j)$ for all $i < j < n$, there exists exactly one $\sigma \in S_n$ such that $A \models R(a \circ \sigma)$.
  \item We say $R$ is \emph{reflexive} on $A$ if, for all $a \in {}^n A$ such that $a(i) = a(j)$ for all $i < j < n$, $A \models R(a)$.
  \item We say $R$ is \emph{irreflexive} on $A$ if, for all $a \in {}^n A$ such that $a(i) = a(j)$ for some $i < j < n$, $A \models \neg R(a)$.
  \item If $n = 2$, we say $R$ is \emph{transitive} if, for all $a, b, c \in A$, if $A \models R(a,b) \wedge R(b,c)$, then $A \models R(a,c)$.
 \end{enumerate}
 We say $A$ has one of the above properties if, for all $R \in \sig(L_0)$, $R$ has that property on $A$.  We say $\KK$ has one of the above properties if, for all $A \in \KK$, $A$ has that property.
\end{defn}

\begin{prop}\label{Prop_StarProperties}
 Each of the properties in Definition \ref{Defn_PropertiesofKK} is closed under free superposition.
\end{prop}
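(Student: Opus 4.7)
The plan is to observe that each of the five properties in Definition \ref{Defn_PropertiesofKK} is formulated in a per-relation manner: $\KK$ has the property if and only if, for every $A \in \KK$ and every $R \in \sig(L_0)$, the relation $R$ has that property on $A$. Combined with the fact that $\sig(L_2)$ is the disjoint union of $\sig(L_0)$ and $\sig(L_1)$, this will let me handle all five properties by a single uniform argument based on reducts.

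First I would fix $A \in \KK_0 \sstar \KK_1$ and a relation symbol $R \in \sig(L_2)$. Since the signatures $\sig(L_0)$ and $\sig(L_1)$ are disjoint, there is a unique $t < 2$ with $R \in \sig(L_t)$. By the definition of the reduct, for every tuple $\oa$ of the appropriate length we have $A \models R(\oa)$ if and only if $A|_{L_t} \models R(\oa)$. Each of the five properties (symmetric, trichotomous, reflexive, irreflexive, transitive) is a statement purely about which tuples satisfy $R$, so $R$ has the property on $A$ if and only if $R$ has the property on $A|_{L_t}$.

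Now suppose $\KK_0$ and $\KK_1$ both have a given property from Definition \ref{Defn_PropertiesofKK}. By the definition of $\KK_0 \sstar \KK_1$, we have $A|_{L_t} \in \KK_t$, so $R$ has the property on $A|_{L_t}$, hence on $A$. Since $R \in \sig(L_2)$ was arbitrary, $A$ has the property; since $A \in \KK_0 \sstar \KK_1$ was arbitrary, $\KK_0 \sstar \KK_1$ has the property. Iterating this argument over each of the five properties yields the proposition.

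No real obstacle arises; the proof is a routine unwinding of definitions and depends only on two structural facts, namely that the signatures of $L_0$ and $L_1$ are disjoint and that each of the listed properties is first-order expressible using just the single relation symbol in question.
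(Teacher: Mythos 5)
Your proof is correct and is essentially the paper's own argument: the paper states it contrapositively (any witness to failure in $\KK_0 \sstar \KK_1$ reducts to a failure in some $\KK_t$), while you run the same reduct observation in the positive direction, using that each relation symbol lives in exactly one of the two disjoint signatures and that each property is a per-relation condition preserved under taking the relevant reduct. No gap; this matches the intended proof.
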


\begin{proof}
 Any witness to the failure of one of these properties in $\KK_0 \sstar \KK_1$ reducts to a failure of the same property in either $\KK_0$ or $\KK_1$.
 \if\papermode0
 { \color{violet}
 
  \emph{More Detail:} For each $t < 2$, let $\KK_t$ be \analgtriv \fraisse class in the finite relational language $L_t$.  Let $\KK_2 = \KK_0 \sstar \KK_1$, where $\KK_2$ is in the language $L_2$, whose signature is the disjoint union of the signatures of $L_0$ and $L_1$.
 
  (Symmetric): Suppose $\KK_2$ is not symmetric, witnessed by $R \in \sig(L_2)$.  Thus, there exist $A \in \KK_2$, $a \in {}^{\arity(R)} A$, and $\sigma \in S_{\arity(R)}$ such that $A \models R(a)$ and $A \models \neg R(a \circ \sigma)$.  Let $t < 2$ be such that $R \in \sig(L_t)$ and let $A_t = A |_{L_t}$.  Then, $A_t \models R(a)$ and $A_t \models \neg R(a \circ \sigma)$.  Thus, $\KK_t$ is not symmetric.
 
  The remainder of the properties follow similarly.
 }
 \fi
\end{proof}

\begin{defn}
 We have a few \algtriv \fraisse classes that we examine in particular in this paper.
 \begin{enumerate}
  \item (Sets) Let $\BS$ denote the class of all finite $L_0$-structures where $L_0$ has empty signature.
  \item (Linear Orders) Let $\LO$ denote the class of all finite $L_0$-structures that are trichotomous, irreflexive, and transitive, where $L_0$ is a language with one binary relation symbol.
  \item (Equivalence Relations) Let $\EE$ denote the class of all finite $L_0$-structures that are symmetric, reflexive, and transitive, where $L_0$ is a language with one binary relation symbol.
  \item (Graphs) Let $\GG$ denote the class of all finite $L_0$-structures that are symmetric and irreflexive, where $L_0$ is a language with one binary relation symbol.
  \item (Hypergraphs) For $k \ge 2$, let $\HH_k$ denote the class of all finite $L_0$-structures that are symmetric and irreflexive, where $L_0$ is a language with one $k$-ary relation symbol.  Clearly $\GG = \HH_2$.
  \item (Tournaments) Let $\TT$ denote the class of all finite $L_0$-structures that are trichotomous and irreflexive, where $L_0$ is a language with one binary relation symbol.
 \end{enumerate}
\end{defn}

\begin{defn}\label{Defn_StarOp}
 Suppose $\KK$ is \analgtriv \fraisse class and fix $n \ge 1$.  Then, define $\KK^{\sstar n}$ recursively as follows:
 \begin{enumerate}
  \item $\KK^{\sstar 0} = \BS$,
  \item $\KK^{\sstar (n+1)} = \KK^{\sstar n} \sstar \KK$.
 \end{enumerate}
\end{defn}

\begin{expl}
 For any \algtriv \fraisse class $\KK$, notice that
 \[
  \BS \sstar \KK = \KK \sstar \BS = \KK.
 \]
 So, in particular, $\KK^{\sstar 1} = \KK$ and $\BS^{\sstar n} = \BS$ for all $n \ge 1$.
\end{expl}

\begin{expl}
 For all $n \ge 1$, $\LO^{\sstar n}$ is the class of all finite sets with $n$ linear orders.
\end{expl}

\begin{expl}\label{Expl_GeneralHypergraphs}
 In any finite relational language $L_0$ where all relations are at least binary, the class of $L_0$-hypergraphs, $\HH_{L_0}$, is the set of all finite $L_0$-structures that are symmetric and irreflexive.  By Proposition \ref{Prop_StarProperties},
 \[
  \HH_{L_0} = \HH_{k_0} \sstar \dots \sstar \HH_{k_{n-1}},
 \]
 where $k_0 \le \dots \le k_{n-1}$ list all the arities (with repetition) of the relation symbols in $L_0$.  By Proposition \ref{Prop_ATFCstar}, $\HH_{L_0}$ is \analgtriv \fraisse class.
\end{expl}

In the remainder of this section, we will introduce tools that will be used to compute $\KK$-rank for specific \algtriv \fraisse classes $\KK$ in Section \ref{Sect_Ranks}.  We use the following proposition to build substructures of \fraisse limits that are isomorphic to the original limit.

\begin{prop}\label{Prop_WeakHom}
 Suppose that $\Gamma$ is the \fraisse limit of $\KK$ and $\Gamma' \subseteq \Gamma$.  If, for all $A, B \in \KK$ with $A \subseteq B$ and $|B \setminus A| = 1$ and for all embeddings $f : A \rightarrow \Gamma'$, there exists an embedding $g: B \rightarrow \Gamma'$ extending $f$, then $\Gamma' \cong \Gamma$.
\end{prop}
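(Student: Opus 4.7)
The plan is to verify that $\Gamma'$ is a countable $L_0$-structure satisfying the defining properties of the \fraisse limit of $\KK$, namely $\age(\Gamma') = \KK$ together with the full (many-point) extension property; by uniqueness of the \fraisse limit (Theorem 6.1.2 of \cite{littlehodges}), this will force $\Gamma' \cong \Gamma$. Countability is immediate since $\Gamma' \subseteq \Gamma$ and $\Gamma$ is countable.

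To establish $\age(\Gamma') = \KK$, note that $\age(\Gamma') \subseteq \age(\Gamma) = \KK$ is trivial. For the reverse inclusion, I induct on $|A|$ for $A \in \KK$. Since $L_0$ is relational, the empty structure lies in $\KK$ and embeds into $\Gamma'$ via the empty map. For the inductive step, given $A \in \KK$ with $|A| \geq 1$, pick $a \in A$ and let $A_0$ be the $L_0$-substructure of $A$ on $A \setminus \{a\}$; by the inductive hypothesis there is an embedding $f : A_0 \to \Gamma'$, and the one-point extension hypothesis (applied with $A_0 \subseteq A$) extends $f$ to an embedding $A \to \Gamma'$.

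Next, I upgrade the one-point hypothesis to the full extension property: for every embedding $f : A \to \Gamma'$ with $A \in \KK$ and every $B \in \KK$ with $A \subseteq B$, $f$ extends to an embedding $B \to \Gamma'$. This is a routine induction on $|B \setminus A|$; at the inductive step, choose $b \in B \setminus A$, let $A'$ be the $L_0$-substructure of $B$ on $A \cup \{b\}$ (which lies in $\KK$ by the hereditary property), apply the one-point hypothesis to extend $f$ to $f' : A' \to \Gamma'$, and then apply the inductive hypothesis to $f'$ and $A' \subseteq B$.

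With $\age(\Gamma') = \KK$, countability, and the full extension property in hand, the standard characterization of the \fraisse limit yields $\Gamma' \cong \Gamma$ (one may also finish by a direct back-and-forth between $\Gamma$ and $\Gamma'$, which both satisfy the extension property). There is no real obstacle here; the content of the proposition is the observation that the one-step version of the extension property suffices, since hereditariness allows intermediate structures to be built up one element at a time.
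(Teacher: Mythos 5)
Your proof is correct and follows essentially the same route as the paper: the paper simply cites Lemma 6.1.4 of \cite{littlehodges} (the characterization of the \fraisse limit via countability, age, and weak homogeneity), noting that the one-point extension hypothesis suffices because intermediate structures can be built up one element at a time. You have just written out the details of that citation.
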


\begin{proof}
 This follows from Lemma 6.1.4 of \cite{littlehodges}.
 \if\papermode0
 { \color{violet}
 
  \emph{More Detail:} To show that $\Gamma'$ is isomorphic to the \fraisse limit of $\KK$, we must show that $\Gamma'$ has the same age as $\Gamma$, $\Gamma'$ is countable, and that $\Gamma'$ is weakly-homogeneous (see Lemma 6.1.4 of \cite{littlehodges}).  Since $\Gamma' \subseteq \Gamma$, $\Gamma'$ is countable.  By assuming that $\KK$ contains the empty $L_0$-structure, it suffices to show that $\Gamma'$ is weakly-homogeneous.  Since we can extend any finite structure ``one element at a time,'' it suffices to do this with $B \subseteq C$ such that $|C \setminus B| = 1$.  This is precisely the hypothesis of the proposition.
  }
 \fi
\end{proof}

The following definition is made in a general context, but we will mostly be interested in the case where $\Gamma$ is the \fraisse limit of \analgtriv \fraisse class $\KK$.

\begin{defn}\label{Defn_SelfSim}
 Let $\Gamma$ be a structure in any language, $L$.  We say $\Gamma$ is (\emph{quantifier-free}) \emph{\selfsim} if, for any finite $A \subseteq \Gamma$ and any complete, non-algebraic (quantifier-free) $1$-$L$-type $p$ over $A$, $p(\Gamma)$ is isomorphic to $\Gamma$.
\end{defn}

Note that, when $\Gamma$ is the \fraisse limit of a \fraisse class in a finite relational language, by quantifier elimination, being quantifier-free \selfsim\ is equivalent to being \selfsim.

\begin{lem}\label{Lem_SelfSim}
 Let $\KK$ be \analgtriv \fraisse class in a finite relational language $L_0$ with \fraisse limit $\Gamma$.  Then, $\Gamma$ is \selfsim\ if and only if, for all $B, B', C \in \KK$ such that $B \subseteq B'$ and $|B' \setminus B| = 1$, for all $A \subseteq C$ and $p$ a complete, non-algebraic $1$-$L_0$-type over $A$, for all embeddings $f : B \rightarrow p(C)$, there exist $C' \in \KK$ with $C' \supseteq C$ and an embedding $f' : B' \rightarrow p(C')$ extending $f$.
\end{lem}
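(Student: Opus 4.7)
The plan is to prove each direction by using the extension (weak-homogeneity) property of the Fra\"{i}ss\'{e} limit $\Gamma$ (Lemma 6.1.4 of \cite{littlehodges}, which also underlies Proposition \ref{Prop_WeakHom}), together with the observation that, since $p$ is a quantifier-free $1$-type over $A$, whether an element realizes $p$ is preserved under every $L_0$-embedding fixing $A$. I will use this transfer principle implicitly throughout.

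For the forward direction, I will assume $\Gamma$ is \selfsim\ and will be given data $B \subseteq B'$ in $\KK$ with $|B' \setminus B| = 1$, $A \subseteq C \in \KK$, a complete non-algebraic qftype $p$ over $A$, and an embedding $f : B \to p(C)$. My approach is to move everything into $\Gamma$: first embed $C \hookrightarrow \Gamma$ so that $f$ becomes an embedding $B \to p(\Gamma)$, and note that self-similarity of $\Gamma$ yields $p(\Gamma) \cong \Gamma$. Then I will apply the weak-homogeneity of this Fra\"{i}ss\'{e} limit inside $p(\Gamma)$ to extend $f$ to an embedding $f' : B' \to p(\Gamma)$, and take $C' := C \cup f'(B')$ as a substructure of $\Gamma$; this $C'$ lies in $\KK$, extends $C$, and contains $f'(B') \subseteq p(C')$ by the transfer principle above.

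For the backward direction, I will assume the stated extension property and, given finite $A \subseteq \Gamma$ together with a complete non-algebraic qftype $p$ over $A$, verify the hypothesis of Proposition \ref{Prop_WeakHom} for $\Gamma' := p(\Gamma)$. So I will fix $B \subseteq B'$ in $\KK$ with $|B' \setminus B| = 1$ and an embedding $f : B \to p(\Gamma)$, set $C := A \cup f(B) \subseteq \Gamma$, and observe that $f$ is in fact an embedding $B \to p(C)$. The hypothesis then produces some $C' \supseteq C$ in $\KK$ and an extension $f' : B' \to p(C')$. Finally, I will embed $C'$ back into $\Gamma$ over $C$ (again by the weak-homogeneity of $\Gamma$), composing with $f'$ to get the desired $g : B' \to p(\Gamma)$.

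The main obstacle is really the bookkeeping of whether ``realizing $p$'' transfers correctly between $C$, $C'$, and $\Gamma$ in each direction; this is immediate once one notes that $p$ is quantifier-free and all of the relevant maps are $L_0$-embeddings fixing $A$, but it is the one place where a sloppy reading could slip. Everything else is a direct application of the Fra\"{i}ss\'{e} extension machinery that Proposition \ref{Prop_WeakHom} and its underlying lemma already provide.
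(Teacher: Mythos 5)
Your proposal is correct and follows essentially the same route as the paper's proof: the forward direction moves $C$ into $\Gamma$, uses $p(\Gamma)\cong\Gamma$ and weak homogeneity to extend $f$, and sets $C'=C\cup f'(B')$; the backward direction verifies the hypothesis of Proposition \ref{Prop_WeakHom} for $p(\Gamma)$ by taking $C=A\cup f(B)$ and re-embedding $C'$ into $\Gamma$ over $C$. Your explicit remark about transferring ``realizes $p$'' along embeddings fixing $A$ is exactly the (quantifier-elimination) point the paper leaves implicit.
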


\begin{proof}
 ($\Rightarrow$): Assume that $\Gamma$ is \selfsim.  Fix $B$, $B'$, $C$, $A$, $p$, and $f$ as in the lemma.  Since $\Gamma$ is the \fraisse limit of $\KK$, we may assume that $C \subseteq \Gamma$.  Since $\Gamma$ is \selfsim, $p(\Gamma) \cong \Gamma$.  By assumption, $f(B) \subseteq p(\Gamma)$.  Since $p(\Gamma)$ is also the \fraisse limit of $\KK$, there exists an embedding $f' : B' \rightarrow p(\Gamma)$ extending $f$.  Let $C' = f'(B') \cup C$.  This gives the desired extension.
 
 ($\Leftarrow$): Fix $A \subseteq \Gamma$ finite and $p$ a complete, non-algebraic $1$-$L_0$-type over $A$.  We show that the hypothesis of Proposition \ref{Prop_WeakHom} is satisfied for $p(\Gamma)$.  Consider $B, B' \in \KK$ with $B \subseteq B'$ and $|B' \setminus B| = 1$ and suppose that $f : B \rightarrow p(\Gamma)$ is an embedding.  Let $C = f(B) \cup A$, so $f$ is an embedding of $B$ into $p(C)$.  By assumption, there exists $C' \in \KK$ with $C' \supseteq C$ and an embedding $f' : B' \rightarrow p(C')$ extending $f$.  Since $\Gamma$ is the \fraisse limit of $\KK$, we may assume $C' \subseteq \Gamma$ and thus $f'$ embeds $B'$ into $p(\Gamma)$.
\end{proof}

The preceding lemma gives us a characterization of when the \fraisse limit of a \fraisse class is \selfsim\ in terms of the class.  Thus, we will say that $\KK$ is \emph{\selfsim} if its \fraisse limit is \selfsim.

\if\papermode0
 { \color{violet}
 
 \emph{More Detail:} We cannot hope to replace the definition of \selfsim\ by changing ``complete type over a finite set'' to ``definable formula.''  For example, consider the formula $\theta(x) = a \le x \le b$ in $\mathbb{Q}$ (for some $a, b \in \mathbb{Q}$ with $a < b$).  Then, even though $\mathbb{Q}$ (and hence $\LO$) is \selfsim, we don't have $\theta(\mathbb{Q}) \cong \mathbb{Q}$.
 }
\fi

The next lemma is essentially the same as Theorem 2.6 of \cite{sauer2020}, but we include a proof here for completeness.

\begin{lem}\label{Lem_Coloring}
 Suppose that $\KK$ is a \selfsim\ \algtriv \fraisse class.  Then, $\KK$ is indivisible.
\end{lem}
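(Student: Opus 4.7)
My plan is to induct on the number of colors $k$. The $k=1$ case is trivial (take $\Gamma' = \Gamma$), so the real content lives in a two-color lemma, and the general case follows by merging colors.

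For the $k=2$ case, suppose $c : \Gamma \to 2$. If $c^{-1}(0)$, viewed as an induced substructure, is isomorphic to $\Gamma$, we are done. Otherwise, the contrapositive of Proposition \ref{Prop_WeakHom} supplies $A \subseteq B$ in $\KK$ with $|B \setminus A| = 1$ and an embedding $f : A \to c^{-1}(0)$ that admits no embedding extension $g : B \to c^{-1}(0)$. Let $b$ be the unique element of $B \setminus A$ and let $p$ be the complete quantifier-free $1$-type over $f(A)$ obtained by transporting the quantifier-free type of $b$ over $A$ through $f$. By quantifier elimination in $\Th(\Gamma)$, $p$ is a complete $1$-type, and by algebraic triviality (strong amalgamation forces $\acl(A) = A$ in $\Gamma$), $p$ is non-algebraic. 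The failure of extension inside $c^{-1}(0)$ says precisely that no element of $c^{-1}(0)$ realizes $p$, so $p(\Gamma) \subseteq c^{-1}(1)$. Since $\KK$ is \selfsim, $p(\Gamma) \cong \Gamma$, yielding the desired monochromatic copy inside $c^{-1}(1)$.

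For $k > 2$, I merge colors $1, \ldots, k-1$ and apply the $k=2$ case to the resulting two-coloring. This yields $\Gamma_0 \subseteq \Gamma$ with $\Gamma_0 \cong \Gamma$ that is either constantly colored $0$ (done), or satisfies $c(\Gamma_0) \subseteq \{1, \ldots, k-1\}$. In the latter case, $\Gamma_0$ is itself a copy of the \fraisse limit of $\KK$ and is therefore \selfsim, so the induction hypothesis applied to $c|_{\Gamma_0}$ produces a monochromatic copy of $\Gamma$ inside $\Gamma_0 \subseteq \Gamma$.

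The delicate point is the $k=2$ step, specifically justifying the contrapositive use of Proposition \ref{Prop_WeakHom} and ensuring that the type $p$ is non-algebraic. The former is legitimate because any $\Gamma' \subseteq \Gamma$ with $\Gamma' \cong \Gamma$ automatically satisfies the one-point extension hypothesis of that proposition; the latter relies squarely on algebraic triviality. Once these observations are in place, the \selfsim\ hypothesis does the heavy lifting by promoting $p(\Gamma) \subseteq c^{-1}(1)$ to a genuine isomorphic copy of $\Gamma$.
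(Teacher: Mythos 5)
Your proof is correct and follows essentially the same route as the paper's: reduce to two colors, use the contrapositive of Proposition \ref{Prop_WeakHom} to extract a one-point extension problem $A \subseteq B = A \cup \{b\}$ that fails in $c^{-1}(0)$, and observe that the realization set of the (complete, non-algebraic) type of $b$ over $f(A)$ lands entirely in $c^{-1}(1)$ and is a copy of $\Gamma$ by definable self-similarity. The paper's $\Gamma' = \{d \in \Gamma : \tp_{L_0}(d, f(A)) = \tp_{L_0}(b, A)\}$ is exactly your $p(\Gamma)$; you merely make explicit the color-merging reduction and the non-algebraicity of $p$, which the paper leaves implicit.
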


\begin{proof}
 Let $\Gamma$ be the \fraisse limit of $\KK$, let $k < \omega$, and let $c : \Gamma \rightarrow k$.  We find $\Gamma' \subseteq \Gamma$ with $\Gamma' \cong \Gamma$ and $i < k$ such that $c(\Gamma') = \{ i \}$.  We may assume $k = 2$.  Suppose that $c^{-1}( \{ 0 \} ) \not\cong \Gamma$.  Then, by the contrapositive of Proposition \ref{Prop_WeakHom}, there exist $A, B \in \KK$ with $A \subseteq B$ and $B = \{ b \} \cup A$, and $f: A \rightarrow c^{-1}( \{ 0 \} )$ an embedding that does not extend to an embedding of $B$ into $c^{-1}( \{ 0 \} )$.  Then, consider
 \[
  \Gamma' = \left\{ d \in \Gamma : \tp_{L_0}(d,f(A)) = \tp_{L_0}(b,A) \right\}.
 \]
 Since $\Gamma$ is \selfsim, $\Gamma' \cong \Gamma$.  On the other hand, for any $d \in \Gamma'$, the function extending $f$ to a function from $B$ to $\Gamma$ by sending $b$ to $d$ is an embedding.  Thus, $c(d) = 1$.  In other words, $c(\Gamma') = \{ 1 \}$.
\end{proof}

In the next proposition, we show that being \selfsim\ is closed under free superposition.

\begin{prop}\label{Prop_SelfSimStar}
 Suppose that $\KK_0$ and $\KK_1$ are \selfsim.  Then, $\KK_0 \sstar \KK_1$ is \selfsim.
\end{prop}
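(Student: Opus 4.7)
The plan is to verify the combinatorial criterion from Lemma \ref{Lem_SelfSim} for $\KK := \KK_0 \sstar \KK_1$ and then invoke that lemma. So fix $B, B' \in \KK$ with $B \subseteq B'$ and $B' \setminus B = \{ b \}$, a finite $C \in \KK$ with $A \subseteq C$, a complete non-algebraic $1$-$L_2$-type $p$ over $A$, and an $L_2$-embedding $f : B \to p(C)$. For each $t < 2$, form the $L_t$-reducts $B_t, B'_t, C_t, A_t$ and set $p_t := p|_{L_t}$. Since $\KK_t$ is algebraically trivial, any algebraic complete $1$-$L_t$-type over $A_t$ is of the form ``$x = a$'' for some $a \in A_t$; were $p_t$ of this form, $p$ itself would force $x = a$ and hence be algebraic, a contradiction. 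So each $p_t$ is a complete non-algebraic $1$-$L_t$-type over $A_t$, and $f$ restricts to an $L_t$-embedding of $B_t$ into $p_t(C_t)$.

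Next, apply the Lemma \ref{Lem_SelfSim} criterion to $\KK_t$ for each $t < 2$, obtaining $C'_t \in \KK_t$ with $C_t \subseteq C'_t$ and an $L_t$-embedding $f'_t : B'_t \to p_t(C'_t)$ extending $f$. Using strong amalgamation in $\KK_t$, I would enlarge each $C'_t$ so that $d_t := f'_t(b)$ lies outside the universe of $C$ (if $f'_t(b) \in C_t$ already, adjoin a fresh element with the same $L_t$-atomic type over $f(B_t) \cup A_t$ as $f'_t(b)$, and reassign $f'_t(b)$ to this new element), and further so that $|C'_0| = |C'_1|$. Then, as in the proof of Proposition \ref{Prop_ATFCstar} and Remark \ref{Rem_Gluing}, I would pick a bijection $h : C'_0 \to C'_1$ that is the identity on the universe of $C$ and satisfies $h(d_0) = d_1$, and glue $C'_0$ and $C'_1$ via $h$ to form $C' \in \KK_0 \sstar \KK_1$ with $C'|_{L_0} = C'_0$ and $C'|_{L_1} \cong_{L_1} C'_1$. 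Since $h$ pointwise fixes the universe of $C$, the $L_2$-structure induced on that universe inside $C'$ coincides with $C$, so $C' \supseteq C$. Finally, define $f' : B' \to C'$ by $f'|_B = f$ and $f'(b) := d_0$. Because each $f'_t$ is an $L_t$-embedding into $p_t(C'_t)$ and the signatures $\sig(L_0)$ and $\sig(L_1)$ are disjoint, $f'$ is an $L_2$-embedding into $p(C')$, completing the verification.

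The main obstacle, I expect, is the enlargement step: carefully adjusting each pair $(C'_t, f'_t)$ via strong amalgamation so that $d_t$ becomes ``new'' (outside the universe of $C$) and $|C'_0| = |C'_1|$, all while keeping $C_t \subseteq C'_t$ and $f'_t$ an $L_t$-embedding of $B'_t$ into $p_t(C'_t)$ extending $f$. Once those adjustments are in place, the bijection-gluing recipe from the proof of Proposition \ref{Prop_ATFCstar} produces $C'$, and the remaining checks on $f'$ are routine.
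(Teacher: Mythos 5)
Your proposal is correct and follows essentially the same route as the paper's proof: restrict to the $L_t$-reducts, apply self-similarity of each $\KK_t$ via the criterion of Lemma \ref{Lem_SelfSim} to get $C'_t$ and $f'_t$, equalize cardinalities, and glue via a bijection fixing $C$ pointwise as in Proposition \ref{Prop_ATFCstar} and Remark \ref{Rem_Gluing}. The one place you go beyond the paper is the strong-amalgamation step relocating $f'_t(b)$ outside the universe of $C$; the paper silently assumes the commuting bijection exists, and your adjustment is a legitimate (and correct) way to justify that detail.
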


\begin{proof}
 Let $L_0$ be the language of $\KK_0$, let $L_1$ be the language of $\KK_1$, and let $L_2$ be the language whose signature is the disjoint union of the signatures of $L_0$ and $L_1$, which serves as the language for $\KK_2 = \KK_0 \sstar \KK_1$.
 
 We use the characterization in Lemma \ref{Lem_SelfSim}.  Fix $B, B', C \in \KK_2$ such that $B \subseteq B'$, fix $A \subseteq C$, fix $p(x)$ a complete, non-algebraic $1$-$L_2$-type over $A$, and fix an $L_2$-embedding $f : B \rightarrow p(C)$.  In particular, for each $t < 2$, $f$ is an $L_t$-embedding of $B$ into $p |_{L_t}(C)$.  For each $t < 2$, since $\KK_t$ is \selfsim, there exist $C'_t \in \KK_t$ with $C |_{L_t} \subseteq C'_t$ and an $L_t$-embedding $f'_t : B' \rightarrow p |_{L_t}(C'_t)$.  Using the hereditary property, we may assume that $|C'_0| = |C'_1|$.  Then, as in the proof of Proposition \ref{Prop_ATFCstar}, there exists a bijection $g$ from $C'_0$ to $C'_1$ such that the following diagram commutes:
 \begin{center}
  \begin{tikzpicture}
   \draw (0,0) node {$B$};
   \draw[->] (0.25,0.25) -- (0.5,0.5) node[anchor = south east] {$\iota$} -- (0.75,0.75);
   \draw (1,1) node {$B'$};
   \draw[->] (0.25,-0.25) -- (0.5,-0.5) node[anchor = north east] {$f$} -- (0.75,-0.75);
   \draw (1,-1) node {$C$};
   \draw[->] (1.3,1) -- (2,1) node[anchor = south] {$f'_0$} -- (2.7,1);
   \draw[->] (1.25,0.8) -- (2.75,-0.8);
   \draw (1.8,0.7) node {$f'_1$};
   \draw[->] (1.3,-1) -- (2,-1) node[anchor = north] {$\iota$} -- (2.7,-1);
   \draw[->] (1.25,-0.8) -- (2.75,0.8);
   \draw (1.8,-0.6) node {$\iota$};
   \draw (3,1) node {$C'_0$};
   \draw (3,-1) node {$C'_1$};
   \draw[->] (3,0.7) -- (3,0) node[anchor = west] {$g$} -- (3,-0.7);
  \end{tikzpicture}
 \end{center} 
 As in Remark \ref{Rem_Gluing}, create the structure $C' \in \KK_2$ with universe $C'_0$ endowed with $L_2$-structure given by $g$.  Then, it is not hard to show that $f'_0$ is an $L_2$-embedding of $B'$ into $p(C')$.
\end{proof}

\begin{expl}\label{Expl_LOGGSelfSim}
 The classes $\LO$, $\GG$, and $\TT$ are \selfsim.  Moreover, for all $k \ge 2$, $\HH_k$ is \selfsim.  By Proposition \ref{Prop_SelfSimStar}, for all $n \ge 1$, $\LO^{\sstar n}$, $\GG^{\sstar n}$, and $\TT^{\sstar n}$ are \selfsim\ and, for all $k \ge 2$, $\HH_k^{\sstar n}$ is \selfsim.  On the other hand, $\EE$ is not \selfsim.
\end{expl}

\begin{proof}
 In the theory of dense linear orders, for any complete, non-algebraic $1$-type $p$ over a finite subset of $\mathbb{Q}$, $p(\mathbb{Q})$ is an open interval, which is clearly isomorphic to $\mathbb{Q}$.  Hence, $\LO$ is \selfsim.
 
 Next, consider $\HH_k$ for some $k \ge 2$ in the language $L_0$ with one $k$-ary relation symbol, $E$.  Fix $B, B', C \in \HH_k$ with $B \subseteq B'$, fix $A \subseteq C$, and fix $p$ a complete, non-algebraic $1$-$L_0$-type over $A$.  Suppose that $f : B \rightarrow p(C)$ is an embedding and that $B' = B \cup \{ b' \}$.  Create an $L_0$-structure $C'$ where $C' = C \cup \{ c' \}$ by setting, for all $\ob \in B^{k-1}$,
 \[
  C' \models E(c', f(\ob)) \mathiff B' \models E(b', \ob),
 \]
 and, for all $\oa \in A^{k-1}$,
 \[
  C' \models E(c', \oa) \mathiff E(x, \oa) \in p(x),
 \]
 and add no additional edges (except those necessary to create symmetry).  Finally, extend $f$ to $f'$ by setting $f'(b') = c'$.  It is easy to check that $f'$ is an embedding from $B'$ into $p(C')$.  A similar argument works for $\TT$, where the ``direction'' of each edge is determined by either $B'$ or $p$.
 
 Finally, consider the class $\EE$ in the language $L_0$ with one binary relation symbol, $E$, and let $\Gamma$ be the \fraisse limit of $\EE$.  Fix $a \in \Gamma$ and let $p(x)$ be the complete $1$-$L_0$-type over $\{ a \}$ extending $x \neq a \wedge E(x,a)$.  Clearly $p(\Gamma) \not\cong \Gamma$.
\end{proof}

The above proof for $\HH_k$ can be modified to show that, if $\KK$ has $3$-amalgamation (see \cite{hru}), then $\KK$ is \selfsim.  On the other hand, $\LO$ witnesses that the converse is false.

Although $\EE$ is not \selfsim, we can analyze $\EE^{\sstar m}$ for $m \ge 1$.  The following lemma aids in this analysis.

\begin{lem}\label{Lem_EEmuniverse}
 For $m < \omega$, let $L_m$ be the language with $m$ binary relation symbols $E_i$ for $i < m$.  For any set $I$, we can put an $L_m$-structure on $I^{m+1}$ by setting, for all $i < m$ and all $\oa, \ob \in I^{m+1}$, $E_i(\oa, \ob)$ if $a_i = b_i$.  With this $L_m$-structure:
 \begin{enumerate}
  \item if $I$ is finite, then $I^{m+1} \in \EE^{\sstar m}$; and
  \item if $I$ is countably infinite, then $I^{m+1}$ is isomorphic to the \fraisse limit of $\EE^{\sstar m}$.
 \end{enumerate}
\end{lem}

\begin{proof}
 Trivial.
\end{proof}

\begin{expl}\label{Expl_EE2color}
 Although $\EE$ is indivisible, for $m \ge 2$, $\EE^{\sstar m}$ is not indivisible.
\end{expl}

\begin{proof}
 It follows from the Pigeonhole Principle that $\EE$ is indivisible.

 Fix $m \ge 2$ and let $\Gamma$ be the \fraisse limit of $\EE^{\sstar m}$.  By Lemma \ref{Lem_EEmuniverse}, we can suppose $\Gamma$ has universe $\omega^{m+1}$.  Consider the coloring $c : \Gamma \rightarrow 2$ given by
 \[
  c(\oa) = \begin{cases} 0 & \text{ if } a_0 < a_1, \\ 1 & \text{ if } a_0 \ge a_1 \end{cases}.
 \]
 Towards a contradiction, suppose there is $\Gamma' \cong \Gamma$ with $c(\Gamma') = \{ t \}$.  For any $\oa \in \Gamma'$, there are only finitely many $E_t$-classes in $\Gamma'$ that have non-empty intersection with the $E_{1-t}$-class of $\oa$ in $\Gamma'$, a contradiction.
 \if\papermode0
  { \color{violet}
  
   More Details: If $t = 0$, then $a_0 < a_1$, since $c(\oa) = 0$.  If $\ob \in \Gamma'$ is such that $b_1 = a_1$, then $b_0 < b_1 = a_1$, so there are only finitely many such choices for $b_0$.  Thus, there are only finitely many $E_0$-classes in $\Gamma'$ that intersect with the $E_1$-class of $\oa$ in $\Gamma'$.  This works similarly for $t = 1$.
  }
 \fi
\end{proof}

To deal with $\EE^{\sstar m}$ in Section \ref{Sect_Ranks}, we need a condition that is weaker than being \selfsim, but which is still strong enough to run counting arguments.  It turns out that age indivisibility is a sufficient condition for our purposes.  By Lemma \ref{Lem_Coloring}, if $\KK$ is \selfsim, then $\KK$ is age indivisible.

\begin{expl}\label{Expl_EEWeakSelfSim}
 For all $m < \omega$, $\EE^{\sstar m}$ is age indivisible.  In particular, age indivisibility is strictly weaker than being \selfsim.
\end{expl}

\begin{proof}
 Let $\Gamma$ be the \fraisse limit of $\EE^{\sstar m}$.  By Lemma \ref{Lem_EEmuniverse}, we may assume that $\Gamma$ has universe $\omega^{m+1}$.  Let $c : \Gamma \rightarrow k$ be a coloring and let $A \in \EE^{\sstar m}$.  Let $n = |A|$.  By Corollary \ref{Cor_ColorBoxes}, there exist $Y_0, \dots, Y_m \in \binom{\omega}{n}$ such that $c$ is constant on $B = \prod_{i \le m} Y_i$.  On the other hand, there is clearly an embedding $g : A \rightarrow B$.
 \if\papermode0
 { \color{violet}
 
  \emph{More Detail:} See the construction in the proof of Proposition \ref{Prop_EELowerBound}.
  }
 \fi
 Thus, $c$ is constant on $g(A)$.  This shows that $\EE^{\sstar m}$ is age indivisible.
\end{proof}

The property described in the following definition is mild, only requiring that any combination of relation symbols of the same arity that can happen, does happen.  However, it provides a lower bound for the number of types in our type-counting arguments in Section \ref{Sect_Ranks}.

\begin{defn}
 Let $\KK$ be \analgtriv \fraisse class.  We say that $\KK$ is \emph{\generic} if, for all $n < \omega$, for all functions $f$ from relation symbols in $L_0$ of arity $n$ to $2$, there exist $A \in \KK$ and $\oa \in A^n$ such that $a_i \neq a_j$ for all $i < j < n$ and, for all relation symbols $R$ in $L_0$ of arity $n$, $A \models R(\oa)$ if and only if $f(R) = 1$.
\end{defn}

Note that, for a language with a single $n$-ary relation symbol, being \generic\ means that there is one (non-repeating) $n$-tuple where the relation holds and one where it fails.

\begin{expl}
 Notice that $\BS$, $\LO$, $\EE$, $\GG$, $\TT$, and $\HH_k$ for all $k \ge 2$ are all \generic.
\end{expl}

\begin{prop}\label{Prop_GenericStar}
 Suppose that $\KK_0$ and $\KK_1$ are \generic.  Then, $\KK_0 \sstar \KK_1$ is \generic.
\end{prop}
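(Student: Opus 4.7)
The plan is to reduce the fully relational property for $\KK_0 \sstar \KK_1$ to the same property for $\KK_0$ and $\KK_1$ separately, using the gluing construction of Remark \ref{Rem_Gluing}. Let $L_2$ denote the language of $\KK_0 \sstar \KK_1$, whose signature is the disjoint union of those of $L_0$ and $L_1$. Fix $n < \omega$ and a function $f$ from the arity-$n$ relation symbols of $L_2$ into $2$, and split $f$ as $f_0 \cup f_1$, where $f_t$ is the restriction of $f$ to the arity-$n$ relation symbols of $L_t$.

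Applying the fully relational property of each $\KK_t$ to $f_t$ yields $A_t \in \KK_t$ and a tuple $\oa_t \in A_t^n$ with pairwise distinct components such that, for every arity-$n$ relation symbol $R \in \sig(L_t)$, $A_t \models R(\oa_t) \mathiff f_t(R) = 1$. I would then enlarge the smaller of $A_0, A_1$ by joint embedding so that, without loss of generality, $|A_0| = |A_1|$ while $\oa_0$ and $\oa_1$ still lie in the new structures. This is the same ``embed into a larger structure'' size-equalization move used in the proofs of Proposition \ref{Prop_ATFCstar} and Proposition \ref{Prop_SelfSimStar}, and is routine since each $\KK_t$ is a \fraisse class.

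Because the components of $\oa_0$ (respectively $\oa_1$) are pairwise distinct and $|A_0| = |A_1|$, I can pick a bijection $h : A_0 \to A_1$ with $h(\oa_0(i)) = \oa_1(i)$ for all $i < n$. Following Remark \ref{Rem_Gluing}, I would equip the set $A_0$ with an $L_2$-structure $C$ whose $L_0$-reduct equals $A_0$ and whose $L_1$-structure is the pullback of $A_1$ along $h$. Then $C \in \KK_0 \sstar \KK_1$ by definition of free superposition. A case split on whether an arity-$n$ symbol $R$ lies in $\sig(L_0)$ or $\sig(L_1)$ then gives $C \models R(\oa_0) \mathiff f(R) = 1$: if $R \in \sig(L_0)$, this holds because $C |_{L_0} = A_0$, while if $R \in \sig(L_1)$, it holds because $h$ is an $L_1$-isomorphism from $C |_{L_1}$ onto $A_1$ sending $\oa_0$ to $\oa_1$. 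Thus $\oa_0$ witnesses the fully relational property of $\KK_0 \sstar \KK_1$ at $(n,f)$.

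I do not expect any real obstacle: the only nontrivial step is arranging $|A_0| = |A_1|$, but this is exactly the size-equalization trick already deployed earlier in the paper, and once the two witnesses live on sets of the same cardinality the gluing mechanism of Remark \ref{Rem_Gluing} handles the rest automatically.
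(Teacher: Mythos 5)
Your proof is correct and follows essentially the same route as the paper: split $f$ over the two signatures, apply the fully relational property of each $\KK_t$ to get witnesses $(A_t, \oa_t)$, equalize the universes, and glue via Remark \ref{Rem_Gluing} using a bijection matching the two tuples componentwise. The only (immaterial) difference is that the paper equalizes sizes by shrinking each $A_t$ to $\{a^t_i : i < n\}$ via the hereditary property rather than enlarging the smaller one, which makes the bijection $a^0_i \mapsto a^1_i$ canonical, but your enlargement move works just as well here.
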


\begin{proof}
 For each $t < 2$, let $\KK_t$ be \analgtriv \fraisse class in the finite relational language $L_t$ that is \generic.  Let $\KK_2 = \KK_0 \sstar \KK_1$, where $\KK_2$ is in the language $L_2$ whose signature is the disjoint union of the signatures of $L_0$ and $L_1$.  Fix $n < \omega$ and, for $t < 3$, let $\sig_n(L_t)$ denote the set of all relation symbols of $L_t$ of arity $n$.  Let $f : \sig_n(L_2) \rightarrow 2$.  For each $t < 2$, since $\KK_t$ is \generic, there exist $A_t \in \KK_t$ and $\oa^t \in A_t^n$ such that $a_i^t \neq a_j^t$ for all $i < j < n$ and, for all $R \in \sig_n(L_t)$, $A_t \models R(\oa^t)$ if and only if $f(R) = 1$.  By the hereditary property, we may assume that $A_t = \{ a_i^t : i < n \}$.  Then, let $A$ be the $L_2$-structure with universe $A_0$ induced by the bijection $a_i^0 \mapsto a_i^1$ as in Remark \ref{Rem_Gluing}.  Thus, $A \in \KK_2$ and, for all $R \in \sig_n(L_2)$, $A \models R(\oa_0)$ if and only if $f(R) = 1$.
\end{proof}


\section{Configurations}\label{Sect_Config}

Throughout this section, let $L$ be any language, let $T$ be a complete $L$-theory, and let $\Mon$ be a monster model of $T$.

The following definition will be used to capture what we mean by ``coding'' the class $\KK$ in the partial type $\pi$.

\begin{defn}\label{Defn_KKConfig}
 Let $\KK$ be \analgtriv \fraisse class in a finite relational language $L_0$ and let $\pi(\oy)$ be a partial type in $T$.  A \emph{$\KK$-configuration} into $\pi$ is a family of functions $(I, f_A)_{A \in \KK}$ such that
 \begin{enumerate}
  \item $I : \sig(L_0) \rightarrow L(\Mon)$;
  \item for all $A \in \KK$, $f_A : A \rightarrow \pi(\Mon)$; and
  \item for all $R \in \sig(L_0)$, for all $A \in \KK$, for all $\oa \in A^{\arity(R)}$,
   \[
    A \models R(\oa) \mathiff \Mon \models I(R)(f_A(\oa)).
   \]
 \end{enumerate}
 Note that, for each $n$-ary relation symbol $R$ in $L_0$, the $L(\Mon)$-formula $I(R)$ has free variables consisting of an $n$-tuple of tuples of variables, each of the same sort as $\oy$.
 
 For a small $C \subseteq \Mon$, we say that a $\KK$-configuration $(I, f_A)_{A \in \KK}$ is \emph{over} $C$ if the image of $I$ is contained in the set of all $L(C)$-formulas.  We say $(I, f_A)_{A \in \KK}$ is \emph{parameter-free} if it is over $\emptyset$.  We say $(I, f_A)_{A \in \KK}$ is \emph{injective} if $f_A$ is injective for each $A \in \KK$.
\end{defn}

A $\KK$-configuration can be defined in terms of the \fraisse limit of $\KK$.

\begin{lem}\label{Lem_FiniteConfig}
 Let $\KK$ be \analgtriv \fraisse class in a finite relational language $L_0$ with \fraisse limit $\Gamma$, let $\pi$ be a partial type in $T$, and let $C \subseteq \Mon$ be small.  There exists a $\KK$-configuration into $\pi$ over $C$ if and only if there exist $I : \sig(R_0) \rightarrow L(C)$ and $f : \Gamma \rightarrow \pi(\Mon)$ such that, for all $R \in \sig(L_0)$ and for all $\oa \in \Gamma^{\arity(R)}$,
 \[
  \Gamma \models R(\oa) \mathiff \Mon \models I(R)(f(\oa)).
 \]
\end{lem}

\begin{proof}
 ($\Leftarrow$): Suppose $I$ and $f$ are given.  For each $A \in \KK$, let $f_A$ be obtained by composing $f$ with an embedding of $A$ into $\Gamma$.  Then, $(I, f_A)_{A \in \KK}$ is a $\KK$-configuration into $\pi$.
 
 ($\Rightarrow$): Let $(I, f_A)_{A \in \KK}$ be a $\KK$-configuration into $\pi(\oy)$.  For each $a \in \Gamma$, let $\oy_a$ be a tuple of variables in $T$ of the same sort as $\oy$ such that $\oy_a$ and $\oy_b$ are disjoint for $a \neq b$.  Consider the type $\Sigma$ in the free variables $(\oy_a)_{a \in \Gamma}$ consisting of:
 \begin{enumerate}
  \item $\pi(\oy_a)$ for all $a \in \Gamma$; and
  \item $I(R)(\oy_{a_0}, \dots, \oy_{a_{n-1}})^{\text{iff } \Gamma \models R(\oa)}$ for all $n$-ary $R \in \sig(L_0)$ and $\oa \in \Gamma^n$.
 \end{enumerate}
  By assumption, $\Sigma$ is finitely satisfiable.  By compactness, $\Sigma$ is consistent and, by saturation of $\Mon$, it has a realization in $\Mon$, say $( \oc_a )_{a \in \Gamma}$.  Define $f : \Gamma \rightarrow \pi(\Mon)$ by setting $f(a) = \oc_a$.  Then $I$ and $f$ are the desired functions.
 \if\papermode0
 { \color{violet}
 
  \emph{More Detail:}
  
  ($\Rightarrow$): Suppose that $f : \Gamma \rightarrow \pi(\Mon)$ is a $\KK$-configuration over $C$.  For each $R \in \sig(L_0)$, let $\varphi_R \in L(C)$ witness this (i.e., for all $\ob \in \Gamma^{\arity(R)}$, $\Gamma \models R(\ob)$ if and only if $\Mon \models \varphi_R(f(\ob))$).  Set $I(R) = \varphi_R$.  Then, for any $A \in \KK$, choose some embedding $g : A \rightarrow \Gamma$.  Thus, for all $R \in \sig(L_0)$, for all $\oa \in A^{\arity(R)}$, $A \models R(\oa)$ if and only if $\Gamma \models R(g(\oa))$ if and only if $\Mon \models I(R)(f(g(\oa)))$.
 
  ($\Leftarrow$): Consider the type
  \begin{align*}
   \Sigma(\oy_b)_{b \in \Gamma} = & \bigcup \{ \pi(\oy_b) : b \in \Gamma \} \cup \\ & \left\{ I(R)(\oy_{\ob})^{\text{iff } \Gamma \models R(\ob)} : R \in \sig(L_0), \ob \in \Gamma^{\arity(R)} \right\}.
  \end{align*}
  Choose some finite $\Sigma_0 \subseteq \Sigma$.  Let $A \subseteq \Gamma$ be finite such that, for each variable $\oy_b$ in a formula in $\Sigma_0$, $b \in A$.  By hypothesis on $A$, $\Sigma_0$ is consistent, witnessed by $(f(a))_{a \in A}$ for the given $f : A \rightarrow \pi(\Mon)$.  By compactness, $\Sigma$ is consistent.  Let $(\oc_b)_{b \in \Gamma}$ witness this.  Define $f : \Gamma \rightarrow \pi(\Mon)$ by setting $f(b) = \oc_b$ for all $b \in \Gamma$.  Then, for all $R \in \sig(L_0)$, for all $\oa \in \Gamma^{\arity(R)}$, $\Gamma \models R(\oa)$ if and only if $\Mon \models I(R)(f(\oa))$.  Thus, $f$ is a $\KK$-configuration into $\pi$ over $C$.
  }
 \fi
\end{proof}

In light of the previous lemma, configurations are closely related to a notion called ``trace definability,'' studied in \cite{wals}.

\begin{expl}\label{Expl_Config}
 If $T$ is the theory of the random $3$-hypergraph and $\pi(x) = (x = x)$, where $x$ is a singleton, then there exists a $\GG$-configuration into $\pi$.  Specifically, if $R$ is the ternary relation in $L$ and $E$ is the binary relation in $L_0$, then let $I(E)(x,y) = R(x,y,a)$ for some $a \in \Mon$; this gives the desired $\GG$-configuration.  However, there does not exist a parameter-free $\GG$-configuration into $\pi$; by quantifier elimination, any $L$-formula must be a Boolean combination of formulas of the form $R(x,y,z)$ and equality.
\end{expl}
 
We will see that any configuration can be made to be parameter-free at the cost of changing the type; see Lemma \ref{Lem_ParameterFreeConfig}.  On the other hand, if $T$ has NIP, we will see that there exists no $\GG$-configuration into any partial type in $T$ (see Theorem \ref{Thm_CCExamples} (2)).

In the preceding example, we saw a case where the target theory was the theory of a \fraisse limit of a \fraisse class other than the index class $\KK$.  In this case, we were unable to find a parameter-free configuration into $x = x$.  However, if $T$ is the theory of the \fraisse limit of $\KK$, we can.

\begin{lem}\label{Lem_IdentityConfig}
 Let $\KK$ be \analgtriv \fraisse class in a finite relational language $L_0$, let $T$ be the theory of the \fraisse limit of $\KK$, and let $\pi(x) = (x = x)$, where $x$ is a singleton.  Then, there exists a parameter-free $\KK$-configuration into $\pi$.
\end{lem}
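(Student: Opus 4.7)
The plan is to take $f$ to be (essentially) the identity embedding of $\Gamma$ into $\Mon$. Since $T = \Th(\Gamma)$, the target language $L$ equals $L_0$, and since $\pi(x) = (x = x)$, we have $\pi(\Mon) = \Mon$. So a $\KK$-configuration is just an $L_0$-embedding $\Gamma \to \Mon$ together with witnessing $L$-formulas, and the obvious candidate for $f$ is some embedding and for each $R \in \sig(L_0)$ the obvious candidate for $\varphi_R$ is the atomic formula $R(\oy)$ itself.

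First I would produce the embedding: since $\Mon \models T$ is $\aleph_1$-saturated, downward L\"owenheim--Skolem gives a countable $N \preceq \Mon$. Since $T$ is $\aleph_0$-categorical (as $\Gamma$ is the \fraisse limit in a finite relational language), $N \cong \Gamma$, and composing this isomorphism with the inclusion $N \hookrightarrow \Mon$ yields an $L_0$-embedding $f : \Gamma \to \Mon$.

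Next, for each $R \in \sig(L_0) = \sig(L)$, set $\varphi_R(\oy_0,\dots,\oy_{n-1}) = R(\oy_0,\dots,\oy_{n-1})$, which is a parameter-free $L$-formula. Because $f$ is an $L_0$-embedding, atomic formulas are preserved and reflected: for every $\oa \in \Gamma^{\arity(R)}$,
\[
  \Gamma \models R(\oa) \mathiff \Mon \models R(f(\oa)) \mathiff \Mon \models \varphi_R(f(\oa)).
\]
This is exactly the defining condition of Definition \ref{Defn_KKConfig}, and since no parameters appear in the $\varphi_R$, the configuration $f$ is parameter-free.

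There is essentially no obstacle here; the content is just that $\Gamma$ sits inside $\Mon$ as an $L_0$-substructure (in fact elementarily). The point of the lemma is to record this canonical ``identity'' configuration, which makes the index theory into a universal source of $\KK$-configurations and will presumably serve as the base case when bounding $\KK$-rank from below in later constructions.
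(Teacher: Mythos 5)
Your proposal is correct and is essentially identical to the paper's proof, which simply takes any embedding $f : \Gamma \rightarrow \Mon$ and sets $\varphi_R = R$ for each $R \in \sig(L_0)$. The only difference is that you spell out the existence of the embedding via L\"owenheim--Skolem and $\aleph_0$-categoricity, which the paper leaves implicit.
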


\begin{proof}
 Let $I$ be the inclusion function on $\sig(L_0)$ and, for each $A \in \KK$, let $f_A : A \rightarrow \Mon$ be any embedding.  Then, $(I, f_A)_{A \in \KK}$ is a $\KK$-configuration into $\pi$.
\end{proof}

\begin{lem}\label{Lem_SmallerTypes}
 Let $\KK$ be \analgtriv \fraisse class in a finite relational language $L_0$.  If $\pi_0(\oy)$ and $\pi_1(\oy)$ are partial types in $T$, $\pi_0(\oy) \vdash \pi_1(\oy)$, and there exists a $\KK$-configuration in $\pi_0$, then there exists a $\KK$-configuration into $\pi_1$.
\end{lem}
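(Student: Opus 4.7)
The proof is essentially immediate from the definitions. The plan is to observe that a $\KK$-configuration $f : \Gamma \rightarrow \pi_0(\Mon)$ continues to be a $\KK$-configuration when the codomain is enlarged, and that $\pi_0 \vdash \pi_1$ forces exactly such an enlargement.

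In detail, I would first fix a $\KK$-configuration $f_0 : \Gamma \rightarrow \pi_0(\Mon)$, witnessed by $L(\Mon)$-formulas $\varphi_R$ for each $R \in \sig(L_0)$ as in Definition \ref{Defn_KKConfig}. Since $\pi_0(\oy) \vdash \pi_1(\oy)$, every realization of $\pi_0$ in $\Mon$ is a realization of $\pi_1$ in $\Mon$; that is, $\pi_0(\Mon) \subseteq \pi_1(\Mon)$. Therefore $f_0$, viewed as a function into $\pi_1(\Mon)$, is well-defined.

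Finally, the defining biconditional is unchanged: for each $R \in \sig(L_0)$ and each $\oa \in \Gamma^{\arity(R)}$, we still have $\Gamma \models R(\oa)$ if and only if $\Mon \models \varphi_R(f_0(\oa))$, with the same witnessing formulas. Thus $f_0 : \Gamma \rightarrow \pi_1(\Mon)$ is a $\KK$-configuration into $\pi_1$. There is no real obstacle here; the only thing to note is that if $f_0$ was over a small set $C$, then so is the resulting configuration into $\pi_1$, since the formulas $\varphi_R$ do not change.
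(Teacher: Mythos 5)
Your proof is correct and is essentially identical to the paper's own one-line argument: since $\pi_0(\Mon) \subseteq \pi_1(\Mon)$, the same map with the same witnessing formulas is a $\KK$-configuration into $\pi_1$. Your extra remark about preservation of the parameter set $C$ is a harmless (and true) bonus.
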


\begin{proof}
 Since $\pi_0(\Mon) \subseteq \pi_1(\Mon)$, any $\KK$-configuration into $\pi_0$ is also a $\KK$-configuration into $\pi_1$.
\end{proof}

As an immediate consequence of the previous lemma, if $\pi(\oy)$ is a partial type in $T$ and there exists a $\KK$-configuration into $\pi$, then there exists a $\KK$-configuration into the type $\oy = \oy$.

As previously mentioned, we can convert any configuration into a parameter-free one at the cost of changing the target partial type.

\begin{lem}\label{Lem_ParameterFreeConfig}
 Let $\KK$ be \analgtriv \fraisse class in a finite relational language $L_0$ and let $\pi$ be a partial type in $T$.  If there exists a $\KK$-configuration into $\pi$, then there exists a parameter-free $\KK$-configuration into some partial type of $T$ (possibly different from $\pi$).
\end{lem}

\begin{proof}
 Let $\pi(\oy)$ be a partial type in $T$ and let $(I, f_A)_{A \in \KK}$ be a $\KK$-configuration into $\pi$.  Choose $\oc \in \Mon^{< \omega}$ such that, for all $R \in \sig(L_0)$, we can take $\oc$ to be the parameters of $I(R)$ (this can be done as $\sig(L_0)$ is finite).  Define $\pi^*$ a partial type of $T$ to be $\pi$ expanded by adding dummy variables $\oz$ for $\oc$.  Then, for each $R \in \sig(L_0)$ of arity $n$, $I(R)(\oy_0, \dots, \oy_{n-1})$ is $T$-equivalent to $\varphi_R(\oy_0, \dots, \oy_{n-1}, \oc)$ for some $L$-formula $\varphi_R$.  Let
 \[
  I'(R)(\oy_0, \oz_0, \oy_1, \oz_1, \dots, \oy_{n-1}, \oz_{n-1}) = \varphi_R(\oy_0, \dots, \oy_{n-1}, \oz_0).
 \]
 For each $A \in \KK$, define $f'_A : A \rightarrow \pi^*(\Mon)$ as follows: For $a \in A$,
 \[
  f'_A(a) = (f_A(a), \oc).
 \]
 Then, it is easy to check that $(I', f'_A)_{A \in \KK}$ is a parameter-free $\KK$-configuration into $\pi^*$.
\end{proof}

We can also convert any configuration into an injective one at the cost of changing the target partial type (assuming the target theory has infinite models).

\begin{lem}\label{Lem_InjectiveConfig}
 Assume $T$ has infinite models.  Let $\KK$ be \analgtriv \fraisse class in a finite relational language $L_0$ and let $\pi$ be a partial type in $T$.  If there exists a $\KK$-configuration into some partial type $\pi$ of $T$, then there exists an injective $\KK$-configuration into some partial type of $T$ (possibly different from $\pi$).
\end{lem}

\begin{proof}
 Let $\pi(\oy)$ be a partial type in $T$ and let $(I, f_A)_{A \in \KK}$ be a $\KK$-configuration into $\pi$.  Let $z$ be a single variable in $T$ not used in $\pi$ and let $\pi'(\oy, z) = \pi(\oy)$.  For each $n$-ary $R \in \sig(L_0)$, let
 \[
  I'(R)(\oy_0, z_0, \dots, \oy_{n-1}, z_{n-1}) = I(R)(\oy_0, \dots, \oy_{n-1}).
 \]
 For each $A \in \KK$, since $A$ is finite and $\Mon$ is infinite, there exists an injective function $g : A \rightarrow \Mon$.  Let $f'_A : A \rightarrow \pi'(\Mon)$ be given by $f'_A(a) = (f_A(a), g(a))$ for all $a \in A$.  By construction, $f'_A$ is injective.  Thus, $(I', f'_A)_{A \in \KK}$ is an injective $\KK$-configuration into $\pi'$.
\end{proof}

\begin{defn}\label{Defn_ReductiveSubclass}
 For each $t < 2$, let $\KK_t$ be \analgtriv \fraisse class over a finite relational language $L_t$.  We say that $\KK_0$ is a \emph{reductive subclass} of $\KK_1$ if $\sig(L_0) \subseteq \sig(L_1)$ and, for each $A \in \KK_0$, there exists $B \in \KK_1$ such that $A \cong_{L_0} B |_{L_0}$.
\end{defn}

\begin{expl}
 Note that $\LO$ is a reductive subclass of $\TT$ (it is actually just a subclass).  For any $\KK_0$ and $\KK_1$, $\KK_0$ is a reductive subclass of $\KK_0 \sstar \KK_1$ (see Remark \ref{Rem_Gluing}).
\end{expl}

\begin{lem}\label{Lem_RedSubClass}
 For each $t < 2$, let $\KK_t$ be \analgtriv \fraisse class over a finite relational language $L_t$, and let $\pi$ be a partial type in $T$.  If there exists a $\KK_1$-configuration into $\pi$ and $\KK_0$ is a reductive subclass of $\KK_1$, then there exists a $\KK_0$-configuration into $\pi$.
\end{lem}

\begin{proof}
 Fix $(I, f_B)_{B \in \KK_1}$ a $\KK_1$-configuration into $\pi$.  Fix $A \in \KK_0$.  Choose $B \in \KK_1$ and $g : A \rightarrow B$ such that $g$ is an $L_0$-isomorphism, and let $f'_A = f_B \circ g$.  Then, for all $R \in \sig(L_0)$ and $\oa \in A^{\arity(R)}$,
 \[
  A \models R(\oa) \mathiff B \models R(g(\oa)) \mathiff \Mon \models I(R)(f_B(g(\oa))).
 \]
 Thus, $(I, f'_A)_{A \in \KK_0}$ is a $\KK_0$-configuration into $\pi$.
\end{proof}

If $\pi_0(\oy_0)$ and $\pi_1(\oy_1)$ are two partial types in $T$ where $\oy_0$ and $\oy_1$ are disjoint, define $\pi_0 \times \pi_1$ to be the following type:
\[
 (\pi_0 \times \pi_1)(\oy_0, \oy_1) = \pi_0(\oy_0) \cup \pi_1(\oy_1).
\]
If $\oy_0$ and $\oy_1$ are not disjoint, we can choose different variables to force disjointness.  Fix $n \ge 1$ and define $\pi^{\times n}$ recursively as follows:
\begin{enumerate}
 \item $\pi^{\times 1} = \pi$,
 \item $\pi^{\times (n+1)} = \pi^{\times n} \times \pi$.
\end{enumerate}

It turns out that free superposition interacts with configurations into these type products in the obvious manner.

\begin{prop}\label{Prop_ProductTypes}
 For each $t < 2$, let $\KK_t$ be \analgtriv \fraisse class over a finite relational language $L_t$.  Suppose $\pi_0$ and $\pi_1$ are two partial types in $T$.  Suppose there exist a $\KK_0$-configuration into $\pi_0$ and a $\KK_1$-configuration into $\pi_1$.  Then, there exists a $(\KK_0 \sstar \KK_1)$-configuration into $\pi_0 \times \pi_1$.
\end{prop}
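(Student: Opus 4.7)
My plan is to verify the proposition via the finite characterization of configurations given by Lemma \ref{Lem_FiniteConfig}, which lets me avoid working with the \fraisse limit of $\KK_0 \sstar \KK_1$ directly and instead build the configuration uniformly over finite $A \in \KK_0 \sstar \KK_1$. Since the signatures of $L_0$ and $L_1$ are disjoint by the definition of free superposition, any such $A$ decomposes as $A|_{L_0} \in \KK_0$ and $A|_{L_1} \in \KK_1$ on a common underlying universe $A$, which is precisely the structural feature that makes a pointwise pairing of the two given configurations work.

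First I would apply the forward direction of Lemma \ref{Lem_FiniteConfig} to each of the two given configurations to produce, for $t < 2$, a map $I_t$ from $\sig(L_t)$ to $L(\Mon)$-formulas witnessing the finite version. Writing $\pi_t(\oy_t)$ with $\oy_0$ and $\oy_1$ disjoint (forced if necessary), I would then define $I_2 : \sig(L_2) \to L(\Mon)$-formulas as follows: for each relation symbol $R \in \sig(L_t)$ of arity $n$, set $I_2(R)$ to be the $L(\Mon)$-formula in the $n$ tuples $(\oy_0^i, \oy_1^i)_{i<n}$ that applies $I_t(R)$ to the subtuple $(\oy_t^i)_{i<n}$ and ignores the other coordinates. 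This is a well-formed $L(\Mon)$-formula on the variables of $\pi_0 \times \pi_1$.

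Next, given an arbitrary $A \in \KK_0 \sstar \KK_1$, the reducts $A|_{L_0}$ and $A|_{L_1}$ share the underlying set $A$, so invoking Lemma \ref{Lem_FiniteConfig} on $I_0$ applied to $A|_{L_0}$ and on $I_1$ applied to $A|_{L_1}$ yields maps $f_t : A \to \pi_t(\Mon)$ with the prescribed coding behavior. Define $f : A \to (\pi_0 \times \pi_1)(\Mon)$ by $f(a) = (f_0(a), f_1(a))$; this lands in $(\pi_0 \times \pi_1)(\Mon)$ because $\oy_0$ and $\oy_1$ are disjoint and each $f_t(a) \models \pi_t$. The coding condition then reduces, for any $R \in \sig(L_t)$ of arity $n$ and $\oa \in A^n$, to the chain of equivalences
\[
 A \models R(\oa) \mathiff A|_{L_t} \models R(\oa) \mathiff \Mon \models I_t(R)(f_t(\oa)) \mathiff \Mon \models I_2(R)(f(\oa)),
\]
where the last equivalence is built into the definition of $I_2(R)$.

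Having exhibited such an $f$ uniformly across $A \in \KK_0 \sstar \KK_1$ through the map $I_2$, the backward direction of Lemma \ref{Lem_FiniteConfig} immediately produces the desired $(\KK_0 \sstar \KK_1)$-configuration into $\pi_0 \times \pi_1$. The only care required lies in the notational bookkeeping that sets up $I_2(R)$ correctly with respect to the variables of $\pi_0 \times \pi_1$; beyond this, I do not anticipate a substantive obstacle, since the disjointness of the signatures means the two coding problems genuinely decouple.
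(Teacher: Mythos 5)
Your proposal is correct and follows essentially the same route as the paper's own proof: both pass through Lemma \ref{Lem_FiniteConfig} in each direction, define the combined witness map by applying $I_t(R)$ to the appropriate subtuple of the paired variables, and pair the finite maps pointwise as $f(a) = (f_0(a), f_1(a))$. No substantive differences.
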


\begin{proof}
 For each $t < 2$, let $(I_t, f_{t,A})_{A \in \KK_t}$ be a $\KK_t$-configuration into $\pi_t$.  We build $(I, f_A)_{A \in \KK_0 \sstar \KK_1}$ a $(\KK_0 \sstar \KK_1)$-configuration into $\pi_0 \times \pi_1$.
 
 For each $t < 2$ and each $n$-ary relation symbol $R$ in $L_t$, let
 \[
  I(R)(\oy_{0,0}, \oy_{0,1}, \oy_{1,0}, \oy_{1,1}, \dots, \oy_{n-1,0}, \oy_{n-1,1}) = I_t(R)(\oy_{0,t}, \oy_{1,t}, \dots, \oy_{n-1,t}).
 \]
 (Note that $\oy_{i,t}$ is of the same sort as free the variables of $\pi_t$ for all $i < n$ and $t < 2$.)  Fix $A \in \KK_0 \sstar \KK_1$.  Let $f_A : A \rightarrow (\pi_0 \times \pi_1)(\Mon)$ be given by, for all $a \in A$,
 \[
  f_A(a) = (f_{0,A |_{L_0}}(a), f_{1, A|_{L_1}}(a)).
 \] 
 Then, we get that, for all $R \in \sig(L_2)$, for all $\oa \in A^{\arity(R)}$,
 \[
  A \models R(\oa) \mathiff \Mon \models I(R)(f_A(\oa)).
 \]
 Thus, $(I, f_A)_{A \in \KK_0 \sstar \KK_1}$ is a $(\KK_0 \sstar \KK_1)$-configuration into $\pi_0 \times \pi_1$.
\end{proof}

\begin{cor}\label{Cor_nIdentityConfig}
 Let $\KK$ be \analgtriv \fraisse class in a finite relational language and let $T_0$ be the theory of the \fraisse limit of $\KK$.  If $\ox$ is a tuple of variables with $n = |\ox|$ in $T_0$, then there exists a $\KK^{\sstar n}$-configuration into $\ox = \ox$.
\end{cor}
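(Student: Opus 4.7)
The plan is a straightforward induction on $n$, bootstrapping from the single-variable case (Lemma \ref{Lem_IdentityConfig}) using the product construction (Proposition \ref{Prop_ProductTypes}). The key observation is that if $x$ is a single variable and $\pi(x) = (x = x)$, then (up to renaming of variables) $\pi^{\times n}$ is exactly the partial type $\ox = \ox$ where $|\ox| = n$.

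For the base case $n = 1$, Lemma \ref{Lem_IdentityConfig} applied to $T_0$ directly gives a $\KK$-configuration into $\pi(x) = (x = x)$, which is $\KK^{\sstar 1}$ into the desired type by Definition \ref{Defn_StarOp}. For the inductive step, suppose we have a $\KK^{\sstar n}$-configuration into $\pi^{\times n}$. By Lemma \ref{Lem_IdentityConfig} applied again to $T_0$, there is a $\KK$-configuration into $\pi$. Then Proposition \ref{Prop_ProductTypes}, applied with $\KK_0 = \KK^{\sstar n}$ and $\KK_1 = \KK$, produces a $(\KK^{\sstar n} \sstar \KK)$-configuration into $\pi^{\times n} \times \pi$. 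By Definition \ref{Defn_StarOp}, this target class is exactly $\KK^{\sstar (n+1)}$, and the target type is $\pi^{\times (n+1)}$, completing the induction.

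There is essentially no substantive obstacle; the only mild care needed is bookkeeping the variables so that the $n$ copies of $\pi$ involved in $\pi^{\times n}$ use disjoint variables, which is handled by the convention preceding Proposition \ref{Prop_ProductTypes}. Once one identifies $\pi^{\times n}$ with $\ox = \ox$ for $|\ox| = n$, the corollary falls out immediately from the two cited results.
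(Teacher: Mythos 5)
Your proof is correct and follows exactly the route the paper takes: the paper's own proof is the one-line "Use Lemma \ref{Lem_IdentityConfig}, Proposition \ref{Prop_ProductTypes}, and induction," and your write-up simply makes that induction explicit, including the harmless identification of $\pi^{\times n}$ with $\ox = \ox$.
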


\begin{proof}
 Use Lemma \ref{Lem_IdentityConfig}, Proposition \ref{Prop_ProductTypes}, and induction.
\end{proof}

We can ``compose'' configurations, as long as the first configuration is parameter-free and the second is injective.

\begin{prop}\label{Prop_ComposeConfig}
 For each $t < 2$, let $\KK_t$ be \analgtriv \fraisse class over a finite relational language $L_t$.  Suppose that $\pi(\oz)$ is a partial type in $T$ and suppose $\oy$ is an $n$-tuple of variables in the \fraisse limit of $\KK_1$ for some $n < \omega$.  Suppose there exist an injective $\KK_1$-configuration into $\pi$ and a parameter-free $\KK_0$-configuration into $\oy = \oy$.  Then, there exists a $\KK_0$-configuration into $\pi^{\times n}$.
\end{prop}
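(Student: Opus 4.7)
The plan is to compose the two configurations componentwise and translate the witnessing formulas by syntactic substitution. Let $\Gamma_1$ denote the \fraisse\ limit of $\KK_1$. Via Lemma~\ref{Lem_FiniteConfig}, encode the hypotheses as maps $I_0 : \sig(L_0) \to L_1\text{-formulas}$ (parameter-free, witnessing the $\KK_0$-configuration into $\oy = \oy$) and $I_1 : \sig(L_1) \to L(\Mon)\text{-formulas}$ (witnessing the $\KK_1$-configuration into $\pi$). Since $\Th(\Gamma_1)$ eliminates quantifiers, I may take each $I_0(R)$ to be a quantifier-free Boolean combination of atomic $L_1$-formulas; each atomic piece is either an equality $y_{i,j} = y_{i',j'}$ between coordinate variables or an instance $S(y_{i_1,j_1},\ldots,y_{i_m,j_m})$ for some $S \in \sig(L_1)$, where $y_{i,j}$ denotes the $j$-th coordinate of the $i$-th $n$-tuple argument.

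Define $I : \sig(L_0) \to L(\Mon)\text{-formulas}$ by syntactic substitution inside each $I_0(R)$: replace every atomic $S(y_{i_1,j_1},\ldots,y_{i_m,j_m})$ with $I_1(S)(\oz_{i_1,j_1},\ldots,\oz_{i_m,j_m})$ and every equality $y_{i,j} = y_{i',j'}$ with the componentwise tuple equality $\oz_{i,j} = \oz_{i',j'}$; here $\oz_{i,j}$ is the $j$-th copy of the $\pi$-variable $\oz$ inside the $i$-th variable block of $\pi^{\times n}$. To verify via Lemma~\ref{Lem_FiniteConfig} that $I$ induces a $\KK_0$-configuration into $\pi^{\times n}$, fix $A \in \KK_0$. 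The parameter-free $\KK_0$-configuration applied to $A$ produces $g_A : A \to \Gamma_1^n$ (the image components can be moved into $\Gamma_1$ using $\aleph_0$-categoricity and quantifier elimination of $\Th(\Gamma_1)$). Let $B$ be the finite $L_1$-substructure of $\Gamma_1$ generated by the components of $g_A(A)$; then $B \in \KK_1$, and the $\KK_1$-configuration applied to $B$ yields $f_B : B \to \pi(\Mon)$. Define $h_A : A \to \pi^{\times n}(\Mon)$ by
\[
 h_A(a) = \bigl( f_B(g_A(a)_1),\, \ldots,\, f_B(g_A(a)_n) \bigr).
\]

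The check $A \models R(\oa) \iff \Mon \models I(R)(h_A(\oa))$ reduces, by induction on the Boolean structure of $I_0(R)$, to atomic subformulas. Instances of $S \in \sig(L_1)$ translate correctly by the $\KK_1$-configuration property of $f_B$. The main obstacle is the equality case $y_{i,j} = y_{i',j'}$: it translates to $\oz_{i,j} = \oz_{i',j'}$, whose forward direction is immediate, but whose backward direction requires $f_B$ to be injective on $B$. I address this with a saturation argument: the partial type over $\Mon$ consisting of the finite $\KK_1$-configuration diagram of $B$ together with the distinctness conditions $\oz_b \neq \oz_{b'}$ for all distinct $b, b' \in B$ is consistent, because algebraic triviality of $\KK_1$ coupled with $\aleph_1$-saturation of $\Mon$ permits us to pick distinct realizations of the relevant partial types in $\pi(\Mon)$. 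With $f_B$ chosen injective, the equality case verifies and Lemma~\ref{Lem_FiniteConfig} produces the required $\KK_0$-configuration into $\pi^{\times n}$.
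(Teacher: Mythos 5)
Your construction is essentially the paper's: extract $I_0$ and $I_1$ via Lemma \ref{Lem_FiniteConfig}, use quantifier elimination in the theory of the \fraisse limit of $\KK_1$ to make each $I_0(R)$ quantifier-free, substitute $I_1(S)$ for every atomic occurrence of $S \in \sig(L_1)$, and verify the resulting assignment finitarily by composing $g_A$ with $f_B$, where $B$ is the (finite, hence, in a relational language, substructure-equal) set of components of $g_A(A)$. The paper's proof stops at ``it is not difficult to check,'' so your explicit reduction to the atomic cases is a genuine improvement in exposition, and you correctly identify that the only delicate atom is equality.

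The one step where you go beyond the paper is also where your argument breaks: the claim that $f_B$ can be chosen injective. Algebraic triviality is a property of the index class $\KK_1$ and says nothing about the target formulas $I_1(S)$; there are $\KK_1$-configurations that are forced to collapse points, in which case the finite type you write down (the $I_1$-diagram of $B$ together with all inequations $\oz_b \neq \oz_{b'}$) is simply inconsistent, and no amount of saturation of $\Mon$ rescues it. Concretely, take $\KK_1 = \EE$ and $I_1(E)(\oz_0,\oz_1)$ the formula $\oz_0 = \oz_1$: sending each $E$-class to a distinct realization of $\pi$ is a legitimate $\EE$-configuration, and any $f_B$ satisfying that diagram must identify $E$-equivalent elements of $B$, so it cannot be injective once $B$ has a class of size at least two. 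What the backward direction of the equality case really needs is either an extra hypothesis guaranteeing an injective choice of the $\KK_1$-configuration (true in all of the paper's applications of this proposition), or an argument that the equality atoms of $I_0(R)$ can be dispensed with. As written, your injectivity step is a gap; in fairness, the paper's own proof silently elides exactly the same point.
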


\begin{proof}
 Let $(I, f_A)_{A \in \KK_0}$ be a $\KK_0$-configuration into $\oy = \oy$.  Since the theory of the \fraisse limit of $\KK_1$ has quantifier elimination, we may assume that, for each $R \in \sig(L_0)$, $I(R)$ is a quantifier-free $L_1$-formula.  Let $(J, g_B)_{B \in \KK_1}$ be a $\KK_1$-configuration into $\pi(\oz)$.  Extend $J$ to $\sig(L_1) \cup \{ = \}$ by setting
 \[
  J(=)(\oz_0, \oz_1) = [\oz_0 = \oz_1]. 
 \]
 Define $H : \sig(L_0) \rightarrow L(\Mon)$ by the following method:  For each $k$-ary $R \in \sig(L_0)$, consider $I(R)(\oy_0, \dots, \oy_{k-1})$ (so $\oy_i = ( y_{i,0}, \dots, y_{i,n-1} )$ for each $i < k$).  For each $S(y_{i_0,j_0}, \dots, y_{i_{\ell-1},j_{\ell-1}}) \in \sig(L_1) \cup \{ = \}$ used in $I(R)$, replace it with
 \[
  J(S)(\oz_{i_0,j_0}, \dots, \oz_{i_{\ell-1},j_{\ell-1}}).
 \]
 This creates an $L(\Mon)$-formula in the variables $( ( \oz_{i,j} )_{j < n} )_{i < k}$; call it $H(R)$.  For each $A \in \KK_0$, choose $B \in \KK_1$ so that the image of $f_A$ is contained in $B^n$ (we can do this since $f_A$ maps into $n$-tuples of the \fraisse limit of $\KK_1$).  Then, define $h_A : A \rightarrow \pi^{\times n}(\Mon)$ by setting $h_A(a) = g_B(f_A(a))$ for all $a \in A$.  It is easy to check that $(H, h_A)_{A \in \KK_0}$ is a $\KK_0$-configuration into $\pi^{\times n}$.
 \if\papermode0
 { \color{violet}
 
  \emph{More Detail:} Fix $A \in \KK_0$.  By Lemma \ref{Lem_FiniteConfig}, there exists $f : A \rightarrow \Gamma_1^n$ such that, for all $R \in \sig(L_0)$, for all $\oa \in A^{\arity(R)}$,
 \[
  A \models R(\oa) \mathiff \Gamma_1 \models I(R)(f(\oa)).
 \]
 Let $B = \{ f(a)_i : a \in A, i < n \} \in \KK_1$.  By Lemma \ref{Lem_FiniteConfig}, there exists $g : B \rightarrow \pi(\Mon)$ such that, for all $S \in \sig(L_1)$, for all $\ob \in B^{\arity(R)}$,
 \[
  B \models S(\ob) \mathiff \Mon \models J(S)(g(\ob)).
 \]
 Therefore, for all $R \in \sig(L_0)$, for all $\oa \in A^{\arity(R)}$,
 \[
  A \models R(\oa) \mathiff B \models I(R)(f(\oa)) \mathiff \Mon \models H(R)(g(f(\oa))).
 \]
 That is, $H$ satisfies the conditions of Lemma \ref{Lem_FiniteConfig}.

 }
 \fi
\end{proof}

We analyze how the properties of being \selfsim\ and being age indivisible translate to configurations.  Being age indivisible manifests in a uniformity condition on $L$-types.

\begin{prop}\label{Prop_WeakGoodEmbedding}
 Suppose that $\KK$ is an age indivisible \algtriv \fraisse class in a finite relational language $L_0$ with \fraisse limit $\Gamma$ and suppose that $\pi(\oy)$ is a partial type in $T$.  For all small $C \subseteq \Mon$, if there exists a $\KK$-configuration into $\pi$ over $C$, then there exist $I : \sig(R_0) \rightarrow L(C)$ and $f : \Gamma \rightarrow \pi(\Mon)$ such that
 \begin{enumerate}
  \item for all $R \in \sig(L_0)$, for all $\oa \in \Gamma^{\arity(R)}$, $\Gamma \models R(\oa)$ if and only if $\Mon \models I(R)(f(\oa))$; and
  \item for all $a, b \in \Gamma$,
   \[
    \tp_L(f(a) / C) = \tp_L(f(b) / C).
   \]
 \end{enumerate}
\end{prop}

\begin{proof}
 Take $I : \sig(L_0) \rightarrow L(C)$ and $f : \Gamma \rightarrow \pi(\Mon)$ as in Lemma \ref{Lem_FiniteConfig}.  Take $\Sigma$ as in the proof of Lemma \ref{Lem_FiniteConfig}, with the additional formulas:
 \begin{enumerate}
  \setcounter{enumi}{2}
  \item $\psi(\oy_a) \leftrightarrow \psi(\oy_b)$ for all $\psi \in L(C)$ and $a, b \in \Gamma$.
 \end{enumerate}
 Fix $\Sigma_0 \subseteq \Sigma$ finite.  Then, there exists a finite set of $L(C)$-formulas $\Psi(\oy)$ and a finite $A \subseteq \Gamma$ so that $\Sigma_0$ mentions only variables $\oy_a$ for $a \in A$ and only formulas $\psi(\oy_a) \leftrightarrow \psi(\oy_b)$ for $\psi \in \Psi$ and $a, b \in A$.  Consider the coloring $c : \Gamma \rightarrow {}^{\Psi} 2$ so that, for all $a \in \Gamma$ and $\psi \in \Psi$,
 \[
  c(a)(\psi) = 1 \mathiff \Mon \models \psi(f(a)).
 \]
 Since $\KK$ is age indivisible, there exists an embedding $g : A \rightarrow \Gamma$ such that $c$ is constant on $g(A)$.  Thus, we get. $(f(g(a)))_{a \in A} \models \Sigma_0$.
 
 By compactness and saturation, there exists $(\oc_a)_{a \in \Gamma} \models \Sigma$.  Define $f' : \Gamma \rightarrow \pi(\Mon)$ by setting $f'(a) = \oc_a$.  Then, $f'$ is the desired function.
\end{proof}

Note that, if we take $C$ so that $\pi$ is over $C$, then Proposition \ref{Prop_WeakGoodEmbedding} is saying that we can choose $f$ so that $f$ maps into the realizations of some complete type over $C$ extending $\pi$.

When $\KK$ is \selfsim, we get a stronger condition on $L$-types.  For all $A \subseteq \Gamma$, let $\cS(A)$ be the set of all complete, non-algebraic $1$-$L_0$-types over $A$.

\begin{prop}\label{Prop_GoodEmbedding}
 Suppose that $\KK$ is a \selfsim\ \algtriv \fraisse class in a finite relational language $L_0$ with \fraisse limit $\Gamma$ and suppose that $\pi(\oy)$ is a partial type in $T$.  For all small $C \subseteq \Mon$, if there exists a $\KK$-configuration into $\pi$ over $C$, then there exist $I : \sig(R_0) \rightarrow L(C)$, $f : \Gamma \rightarrow \pi(\Mon)$, and $J \subseteq |\oy|$ such that
 \begin{enumerate}
  \item for all $R \in \sig(L_0)$, for all $\oa \in \Gamma^{\arity(R)}$, $\Gamma \models R(\oa)$ if and only if $\Mon \models I(R)(f(\oa))$; and
  \item for all $a, b \in \Gamma$, $\tp_L(f(a) / C) = \tp_L(f(b) / C)$;
  \item for all $j \in J$ and all $a, b \in \Gamma$, $f(a)_j = f(b)_j$; and
  \item for all finite $A \subseteq \Gamma$ and all $p \in \cS(A)$, there exists $b \models p$ such that, for all $i, j \in |\oy| \setminus J$ and all $a \in A$, $f(a)_i \neq f(b)_j$.
 \end{enumerate}
\end{prop}

\begin{proof}
 By Proposition \ref{Prop_WeakGoodEmbedding}, there exist $I$ and $f$ such that (1) and (2) hold.
 
 For conditions (3) and (4), start with $J = \emptyset$ and construct $J$ recursively as follows:  For any $J$ satisfying condition (3), assume that condition (4) fails.  So there exist a finite $A \subseteq \Gamma$ and $p \in \cS(A)$ such that, for all $b \models p$, there exist $i, j \in |\oy| \setminus J$ and $a \in A$ such that $f(a)_i = f(b)_j$.  Let $\Gamma' = \{ b \in \Gamma : b \models p \}$.  Since $\KK$ is 
\selfsim, $\Gamma' \cong \Gamma$.  Consider a coloring $c : \Gamma' \rightarrow (|\oy| \setminus J)^2 \times A$ given by $c(b) = (i,j,a)$ for some choice of $i, j \in |\oy| \setminus J$ and $a \in A$ such that $f(a)_i = f(b)_j$.  By Lemma \ref{Lem_Coloring}, we may assume that $c$ is constant.  Thus, for all $b, d \in \Gamma'$, $f(b)_j = f(a)_i = f(d)_j$ (in other words, condition (3) holds on $\Gamma'$ for $J \cup \{ j \}$).  Add $j$ to $J$ and replace $\Gamma$ with $\Gamma'$.  Repeat this process.  Since $|\oy|$ is finite, this will eventually terminate.  This gives us the desired conclusion.
\end{proof}

We use Proposition \ref{Prop_WeakGoodEmbedding} and Proposition \ref{Prop_GoodEmbedding} in Subsection \ref{Subsect_RandGraph} to compute $\KK$-ranks for particular choices of $\KK$.


\section{Dividing Lines}\label{Sect_DividingLines}

Before defining and studying $\KK$-ranks, we first connect the notions discussed above with the ideas considered in \cite{gh}.

\begin{defn}
 Let $\KK$ be \analgtriv \fraisse class.  Define $\cC_\KK$ to be the class of all complete theories $T$ with infinite models such that there exists a $\KK$-configuration into some partial type $\pi$ (in this case, we will say that $T$ \emph{admits} a $\KK$-configuration).
\end{defn}

Note that our definition of $\cC_\KK$ coincides with the definition of $\mathfrak{C}_\KK$ from \cite{gh} in the case where $\KK$ is an indecomposable \algtriv \fraisse class (see Observation 2.12 of \cite{gh}).  In that paper, the authors establish a quasi-order on theories, use this quasi-order to define classes of theories, and show that these classes are exactly those of the form $\cC_\KK$ for some indecomposable \algtriv \fraisse class, $\KK$ (see Theorem 2.17 of \cite{gh} for more details).

How do the classes $\cC_\KK$ relate to known dividing lines in model theory?  First of all, $\cC_\BS = \cC_\EE$ is the class of all complete theories with infinite models.  What about more interesting $\KK$?  The following theorem describes the relationship of $\cC_\KK$ to the classes of theories that are stable, NIP, and $k$-dependent.

\begin{thm}[Proposition 4.31 of \cite{gh}, Proposition 5.2 of \cite{cpt}]\label{Thm_CCExamples}
 Let $T$ be a complete first-order theory with infinite models.
 \begin{enumerate}
  \item $T$ is stable if and only if $T \notin \cC_\LO$.
  \item $T$ has NIP if and only if $T \notin \cC_\GG$.
  \item For all $k \ge 2$, $T$ has $(k-1)$-dependence if and only if $T \notin \cC_{\HH_k}$.
 \end{enumerate}
\end{thm}

\begin{proof}
 If $\varphi(\oy; \oz)$ is a witness to the order property, then the map $I$ sending $<$ to
 \[
  \varphi^*(\oy_0, \oz_0; \oy_1, \oz_1) = \varphi(\oy_0; \oz_1)
 \]
 witnesses that there exists an $\LO$-configuration into $\Mon^{|\oy|+|\oz|}$.  Similar arguments can be made for the $(k-1)$-independence property and $\HH_k$-configurations for $k \ge 2$; see the proof of Lemma 2.2 of \cite{ls03}.
 \if\papermode0
 { \color{violet}
 
  \emph{More Detail:} 
  
  (1), ($\Rightarrow$): Assume that $T \in \cC_\LO$.  Then, there exists an $\LO$-configuration $f : \Gamma \rightarrow \Mon^n$ for some $n < \omega$.  In other words, there exists an $L(\Mon)$-formula $\varphi(\oy_0, \oy_1)$ such that, for all $a, b \in \Gamma$,
  \[
   \Gamma \models a < b \mathiff \Mon \models \varphi(f(a), f(b)).
  \]
  That is, $\varphi(\oy_0, \oy_1)$ has the order property.
 
  (1), ($\Leftarrow$): Assume $\varphi(\oy; \oz)$ has the order property.  Define $I : \sig(L_0) \rightarrow L(\Mon)$ by setting $I(<)(\oy_0, \oz_0; \oy_1, \oz_1) = \varphi(\oy_0; \oz_1)$.  For any $A \in \Gamma$, choose $\oc_a \in \Mon^{|\oy|}$ and $\od_a \in \Mon^{|\oz|}$ for all $a \in A$ such that, for all $a, b \in \Gamma$,
  \[
   \Gamma \models a < b \mathiff \Mon \models \varphi(\oc_a; \od_b).
  \]
  Define $f : A \rightarrow \Mon^{|\oy|+|\oz|}$ by $f(a) = (\oc_a, \od_a)$.  Then, for all $a, b \in A$,
  \[
   \Gamma \models a < b \mathiff \Mon \models I(<)(f(a), f(b)).
  \]
  By Lemma \ref{Lem_FiniteConfig}, there exists an $\LO$-configuration.
 
  (2) follows from (3) when $k = 2$.
 
  (3), ($\Rightarrow$): Assume that $T \in \cC_{\HH_k}$.  Then, there exists an $\HH_k$-configuration $f : \Gamma \rightarrow \Mon^n$ for some $n < \omega$.  In other words, there exists an $L(\Mon)$-formula $\varphi(\oy_0, \dots \oy_{k-1})$ such that, for all $\oa \in \Gamma^k$,
  \[
   \Gamma \models E(\oa) \mathiff \Mon \models \varphi(f(\oa)).
  \]
  Fix $m < \omega$ and choose distinct $a_{i,j} \in \Gamma$ for $i < k - 1$ and $j < m$.  Since $\Gamma$ is the random $k$-hypergraph, for each $s \subseteq m^{k-1}$, there exists $b_s \in \Gamma$ such that, for all $\oj \in m^{k-1}$,
  \[
   \Gamma \models E(a_{0,j_0}, \dots, a_{k-2,j_{k-2}}, b_s) \mathiff \oj \in s.
  \]
  Therefore, for all $\oj \in m^{k-1}$ and $s \subseteq m^{k-1}$,
  \[
   \Mon \models \varphi(f(a_{0,j_0}), \dots, f(a_{k-2,j_{k-2}}), f(b_s)) \mathiff \oj \in s.
  \]
  Since $m$ was arbitrary, by compactness, $\varphi$ has the $(k-1)$-independence property.  Thus, $T$ has the $(k-1)$-independence property.
  
  (3), ($\Leftarrow$): Assume that $T$ has $(k-1)$-independence.  Thus, there exist an $L(\Mon)$-formula $\varphi(\oz_0, \oz_1, \dots, \oz_{k-1})$, $\oa_{i,j} \in \Mon^{|\oz_i|}$ for $i < k - 1$ and $j < \omega$, and $\ob_s \in \Mon^{|\oz_{k-1}|}$ for $s \subseteq \omega^{k-1}$ such that, for all $\oj \in \omega^{k-1}$,
  \[
   \Mon \models \varphi(\oa_{0,j_0}, \dots, \oa_{k-2, j_{k-2}}, \ob_s) \mathiff \oj \in s.
  \]
  Define
  \[
   \varphi'(\oz_{i,j})_{i,j<k} = \varphi(\oz_{0,0}, \dots, \oz_{k-1,k-1}).
  \]
  Let $I$ be the function that sends $E$ to $\varphi'$.  Fix $G$ a $k$-hypergraph with vertex set $m < \omega$.  For each $v \in G$, let
  \[
   S_v = \{ \overline{w} \in G^{k-1} : G \models E(w_0, \dots, w_{k-2}, v) \}.
  \]
  Finally, define $f : G \rightarrow \Mon^{|\oz_0| + \dots + |\oz_{k-1}|}$ as follows: For all $v \in G$, let
  \[
   f(v) = ( \oa_{0,v}, \dots, \oa_{k-2,v}, \ob_{S_v} ).
  \]
  Then, for all $\overline{v} \in G^k$, $G \models E(\overline{v})$ if and only if $(v_0, \dots, v_{k-2}) \in S_{v_k}$ if and only if $\Mon \models \varphi(\oa_{0,v_0}, \dots, \oa_{k-2,v_{k-2}}, \ob_{S_{v_{k-1}}})$ if and only if $\Mon \models \varphi'(f(\overline{v}))$.  Thus, by Lemma \ref{Lem_FiniteConfig}, $T$ admits a $\HH_k$-configuration.
 }
 \fi 
\end{proof}

\begin{defn}
 Given two \algtriv \fraisse classes $\KK_0$ and $\KK_1$, we say that
 \[
  \KK_0 \lesssim \KK_1
 \]
 if the theory of the \fraisse limit of $\KK_1$ is in $\cC_{\KK_0}$.  We say
 \[
  \KK_0 \sim \KK_1
 \]
 if $\KK_0 \lesssim \KK_1$ and $\KK_1 \lesssim \KK_0$.
\end{defn}

\begin{prop}\label{Prop_AllAboutLesssim}
 Fix \algtriv \fraisse classes $\KK_0$, $\KK_1$, and $\KK_2$.
 \begin{enumerate}
  \item $\lesssim$ is a quasi-order on \algtriv \fraisse classes.
  \item $\KK_0 \lesssim \KK_0 \sstar \KK_1$.
  \item if $\KK_0 \lesssim \KK_2$ and $\KK_1 \lesssim \KK_2$, then $\KK_0 \sstar \KK_1 \lesssim \KK_2$.
  \item $\KK_0 \lesssim \KK_1$ if and only if $\cC_{\KK_1} \subseteq \cC_{\KK_0}$.
  \item $\KK_0 \sim \KK_1$ if and only if $\cC_{\KK_0} = \cC_{\KK_1}$.
 \end{enumerate}
\end{prop}

\begin{proof}
 For each $i < 3$, let $L_i$ be the language of $\KK_i$ and let $T_i$ be the theory of the \fraisse limit of $\KK_i$.

 (1): By Lemma \ref{Lem_IdentityConfig}, $T_0$ admits a $\KK_0$-configuration.  Hence, $\KK_0 \lesssim \KK_0$.  So $\lesssim$ is reflexive.
 
 Assume that $\KK_0 \lesssim \KK_1$ and $\KK_1 \lesssim \KK_2$.  Then, $T_1$ admits a $\KK_0$-configuration and $T_2$ admits a $\KK_1$-configuration.  By Lemma \ref{Lem_ParameterFreeConfig}, $T_1$ admits a parameter-free $\KK_0$-configuration and, by Lemma \ref{Lem_InjectiveConfig}, $T_2$ admits an injective $\KK_1$-configuration.  So, by Proposition \ref{Prop_ComposeConfig}, $T_2$ admits a $\KK_0$-configuration.  Thus, $\KK_0 \lesssim \KK_2$.  So $\lesssim$ is transitive.
 
 (2):  Let $T_0 \sstar T_1$ be the theory of the \fraisse limit of $\KK_0 \sstar \KK_1$.  By (1), $T_0 \sstar T_1$ admits a $(\KK_0 \sstar \KK_1)$-configuration.  However, $\KK_0$ is a reductive subclass of $\KK_0 \sstar \KK_1$.  By Lemma \ref{Lem_RedSubClass}, $T_0 \sstar T_1$ admits a $\KK_0$-configuration.
 
 (3): Assume $\KK_0 \lesssim \KK_2$ and $\KK_1 \lesssim \KK_2$.  Thus, $T_2$ admits a $\KK_0$-configuration and a $\KK_1$-configuration.  By Proposition \ref{Prop_ProductTypes}, $T_2$ admits a $(\KK_0 \sstar \KK_1)$-configuration.  Therefore, $\KK_0 \sstar \KK_1 \lesssim \KK_2$.
 
 (4), ($\Rightarrow$): Assume $\KK_0 \lesssim \KK_1$ and $T \in \cC_{\KK_1}$.  By Lemma \ref{Lem_ParameterFreeConfig}, $T_1$ admits a parameter-free $\KK_0$-configuration.  By Lemma \ref{Lem_InjectiveConfig}, $T$ admits an injective $\KK_1$-configuration.  By Proposition \ref{Prop_ComposeConfig}, $T$ admits a $\KK_0$-configuration.  Thus, $T \in \cC_{\KK_0}$.
 
 (4), ($\Leftarrow$): Assume $\cC_{\KK_1} \subseteq \cC_{\KK_0}$.  By Lemma \ref{Lem_IdentityConfig}, $T_1$ is in $\cC_{\KK_1}$.  Therefore, it is in $\cC_{\KK_0}$.  Therefore, $\KK_0 \lesssim \KK_1$.
 
 (5): Follows immediately from (4).
\end{proof}

From this proposition, we get a characterization of when a free superposition of two classes is equivalent to one of the classes.  This corollary is a generalization of two results from \cite{gh}, namely Corollary 3.10 and Theorem 4.24.

\begin{cor}\label{Cor_KSim}
 Suppose $\KK_0$ and $\KK_1$ are \algtriv \fraisse classes.  Then, $\KK_0 \lesssim \KK_1$ if and only if $\KK_0 \sstar \KK_1 \sim \KK_1$.
\end{cor}

\begin{proof}
 ($\Rightarrow$): Assume $\KK_0 \lesssim \KK_1$.  By Proposition \ref{Prop_AllAboutLesssim} (2), $\KK_1 \lesssim \KK_0 \sstar \KK_1$.  By Proposition \ref{Prop_AllAboutLesssim} (1), $\KK_1 \lesssim \KK_1$.  By Proposition \ref{Prop_AllAboutLesssim} (3), $\KK_0 \sstar \KK_1 \lesssim \KK_1$.  Thus, $\KK_0 \sstar \KK_1 \sim \KK_1$.
 
 ($\Leftarrow$): Assume $\KK_0 \sstar \KK_1 \sim \KK_1$.  By Proposition \ref{Prop_AllAboutLesssim} (2), $\KK_0 \lesssim \KK_0 \sstar \KK_1$.  By Proposition \ref{Prop_AllAboutLesssim} (1), $\KK_0 \lesssim \KK_1$.
\end{proof}

\begin{cor}\label{Cor_StarSim}
 If $\KK$ is \analgtriv \fraisse class and $n \ge 1$, then $\KK^{\sstar n} \sim \KK$.
\end{cor}

\begin{proof}
 Follows from Corollary \ref{Cor_KSim} by induction.
\end{proof}

\begin{cor}(Corollary 3.10 of \cite{gh})
 Let $\KK$ be \analgtriv \fraisse class and $T$ the theory of the \fraisse limit of $\KK$.  Then, $T$ is unstable if and only if $\KK \sstar \LO \sim \KK$.
\end{cor}

\begin{proof}
 By Theorem \ref{Thm_CCExamples} (1), $T$ is unstable if and only if $T \in \cC_{\LO}$, which holds if and only if $\LO \lesssim \KK$.  By Corollary \ref{Cor_KSim}, $\LO \lesssim \KK$ if and only if $\KK \sstar \LO \sim \KK$.
\end{proof}

\begin{cor}(Theorem 4.24 of \cite{gh})
 Let $L_0$ be a finite relational language where each relation symbol is at least binary.  Let $\HH_{L_0}$ be the class of all $L_0$-hypergraphs (see Example \ref{Expl_GeneralHypergraphs}).  Let $k$ be the largest arity among relation symbols in $L_0$.  Then,
 \[
  \HH_{L_0} \sim \HH_k.
 \]
\end{cor}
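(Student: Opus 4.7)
The plan is to establish $\HH_{L_0} \sim \HH_k$ by proving both $\HH_k \lesssim \HH_{L_0}$ and $\HH_{L_0} \lesssim \HH_k$. The first direction is immediate: by Example \ref{Expl_GeneralHypergraphs}, $\HH_{L_0} = \HH_{k_0} \sstar \cdots \sstar \HH_{k_{n-1}}$ with $k_{n-1} = k$. Free superposition is evidently commutative and associative up to relabeling of the signature, so $\HH_{L_0}$ may be written as $\HH_k \sstar \KK'$ for $\KK' = \HH_{k_0} \sstar \cdots \sstar \HH_{k_{n-2}}$, and Proposition \ref{Prop_AllAboutLesssim}(2) yields $\HH_k \lesssim \HH_{L_0}$.

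For the other direction, $\HH_{L_0} \lesssim \HH_k$, the strategy is to first establish the key intermediate claim that $\HH_j \lesssim \HH_k$ whenever $2 \le j \le k$. Granting this, since each $k_i$ satisfies $2 \le k_i \le k$, repeated application of Proposition \ref{Prop_AllAboutLesssim}(3) yields $\HH_{L_0} = \HH_{k_0} \sstar \cdots \sstar \HH_{k_{n-1}} \lesssim \HH_k$, completing the argument.

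To prove $\HH_j \lesssim \HH_k$ for $2 \le j \le k$, let $\Mon$ be a monster model of the theory of the random $k$-hypergraph with $k$-ary relation symbol $R$, and let $E$ be the $j$-ary relation symbol in the language of $\HH_j$. I will verify the hypothesis of Lemma \ref{Lem_FiniteConfig} to exhibit an $\HH_j$-configuration into the partial type $y = y$. Fix distinct parameters $c_1, \ldots, c_{k-j} \in \Mon$ (none if $j = k$), define
\[
  \varphi(y_0, \ldots, y_{j-1}) := R(y_0, \ldots, y_{j-1}, c_1, \ldots, c_{k-j}),
\]
and set $I(E) = \varphi$. For each $A \in \HH_j$, taken disjoint from $\{c_1, \ldots, c_{k-j}\}$, build an auxiliary $k$-hypergraph $B$ on the union $A \cup \{c_1, \ldots, c_{k-j}\}$ by declaring as edges exactly those $k$-subsets of the form $\{a_0, \ldots, a_{j-1}\} \cup \{c_1, \ldots, c_{k-j}\}$ with $\{a_0, \ldots, a_{j-1}\}$ an edge of $A$, and no others. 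The induced substructure of $B$ on $\{c_1, \ldots, c_{k-j}\}$ is edge-free (having fewer than $k$ elements), matching that in $\Mon$. By ultrahomogeneity of the \fraisse limit, the identity on $\{c_1, \ldots, c_{k-j}\}$ extends to an embedding $g : B \to \Mon$; setting $f := g|_A$, we obtain for all $\oa \in A^j$ that $A \models E(\oa)$ iff $B \models R(a_0, \ldots, a_{j-1}, c_1, \ldots, c_{k-j})$ iff $\Mon \models \varphi(f(\oa))$.

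The main obstacle is checking that the auxiliary $k$-hypergraph $B$ is well-formed and genuinely extends to an embedding in $\Mon$ fixing the chosen $c_i$. Symmetry of $B$ is automatic since edges are presented as unordered sets; irreflexivity follows from the disjointness of $A$ and $\{c_1, \ldots, c_{k-j}\}$; and the extension of the identity to an embedding $g : B \to \Mon$ is a standard consequence of ultrahomogeneity of the random $k$-hypergraph together with the $\aleph_1$-saturation of $\Mon$. Everything else reduces to a routine unwinding of Lemma \ref{Lem_FiniteConfig}.
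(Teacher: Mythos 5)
Your proof is correct and follows essentially the same route as the paper's: decompose $\HH_{L_0}$ as the free superposition $\HH_{k_0} \sstar \cdots \sstar \HH_{k_{n-1}}$, use the key fact $\HH_j \lesssim \HH_k$ for $j \le k$, and combine via the $\lesssim$-calculus of Proposition \ref{Prop_AllAboutLesssim} (which the paper packages as Corollary \ref{Cor_KSim}). The only difference is that where the paper simply cites Lemma 4.10 of \cite{gh} for $\HH_j \lesssim \HH_k$, you supply a correct self-contained construction of the $\HH_j$-configuration into the random $k$-hypergraph via the formula $R(y_0, \ldots, y_{j-1}, c_1, \ldots, c_{k-j})$ and Lemma \ref{Lem_FiniteConfig}.
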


\begin{proof}
 Let $k_0 \le \dots \le k_{n-1} = k$ list off all arities (with repetition) of the relation symbols in $L_0$.  Then,
 \[
  \HH_{L_0} = \HH_{k_0} \sstar \dots \sstar \HH_{k_{n-1}}.
 \]
 Notice that $\HH_m \lesssim \HH_\ell$ for each $m \le \ell$.  To see this, suppose that $E$ is the $m$-ary relation symbol for $\HH_m$, $R$ is the $\ell$-ary relation symbol for $\HH_\ell$, and $\Mon$ is a monster model of the theory of the \fraisse limit of $\HH_\ell$.  Fix $\oc \in \Mon^{\ell - m}$ and set
 \[
  I(E)(x_0, \dots, x_{m-1}) = R(x_0, \dots, x_{m-1}, \oc).
 \]
 It is easy to check that this creates an $\HH_m$-configuration into $\Mon$ (similar to Example \ref{Expl_Config}).
 
 So, by Corollary \ref{Cor_KSim},
 \[
  \HH_{k_0} \sstar \dots \sstar \HH_{k_{n-1}} \sim \HH_{k_{n-1}}.
 \]
 Therefore, $\HH_{L_0} \sim \HH_k$.
\end{proof}


\section{$\KK$-Ranks}\label{Sect_Ranks}

In this section, instead of looking at which theories admit a $\KK$-configuration (into any type) for some \algtriv \fraisse class $\KK$, we want to pay close attention to a fixed partial type in the target.  We aim to count the number of ``independent copies'' of a single class $\KK$ that we can code into a partial type.  Let $T$ be a complete $L$-theory with monster model $\Mon$ and let $\KK$ be \analgtriv \fraisse class in a finite relational language $L_0$.

\begin{defn}\label{Defn_KKRank}
 Fix $n \ge 1$ and $\pi$ a partial type in $T$.  We say that $\pi$ has \emph{$\KK$-rank} $n$ if
 \begin{enumerate}
  \item there exists a $\KK^{\sstar n}$-configuration into $\pi$, and
  \item there does not exist a $\KK^{\sstar (n+1)}$-configuration into $\pi$.
 \end{enumerate}
 We say $\pi$ has $\KK$-rank $\infty$ if there exists a $\KK^{\sstar n}$-configuration into $\pi$ for all $n < \omega$.  We say $\pi$ has $\KK$-rank $0$ if there does not exist a $\KK$-configuration into $\pi$.  We denote the $\KK$-rank of $\pi$ by $\Rk_\KK(\pi)$.
\end{defn}

We can apply Lemma \ref{Lem_SmallerTypes} and Proposition \ref{Prop_ProductTypes} to get a few immediate results about $\KK$-rank.

\begin{prop}[Superadditivity of $\KK$-rank]\label{Prop_SuperAdd}
 For all partial types $\pi_0$ and $\pi_1$,
 \[
  \Rk_\KK(\pi_0 \times \pi_1) \ge \Rk_\KK(\pi_0) + \Rk_\KK(\pi_1).
 \]
\end{prop}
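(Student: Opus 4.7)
The plan is to reduce superadditivity directly to Proposition \ref{Prop_ProductTypes}, together with the observation that iterated free superposition is associative. Let $n_0 = \Rk_\KK(\pi_0)$ and $n_1 = \Rk_\KK(\pi_1)$. The main case is when $n_0, n_1 \ge 1$ and both are finite: by Definition \ref{Defn_KKRank}, there exists a $\KK^{\sstar n_0}$-configuration into $\pi_0$ and a $\KK^{\sstar n_1}$-configuration into $\pi_1$. Applying Proposition \ref{Prop_ProductTypes} with $\KK_0 := \KK^{\sstar n_0}$ and $\KK_1 := \KK^{\sstar n_1}$ yields a $(\KK^{\sstar n_0} \sstar \KK^{\sstar n_1})$-configuration into $\pi_0 \times \pi_1$.

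Next, I would identify $\KK^{\sstar n_0} \sstar \KK^{\sstar n_1}$ with $\KK^{\sstar(n_0 + n_1)}$. Unpacking Definition \ref{Defn_StarOp}, this reduces to associativity of $\sstar$, which is immediate from the definition of free superposition: both $(\KK_0 \sstar \KK_1) \sstar \KK_2$ and $\KK_0 \sstar (\KK_1 \sstar \KK_2)$ describe the class of finite structures in the common disjoint-union language whose $L_i$-reducts lie in $\KK_i$. Then a simple induction on $n_1$ (base case $n_1 = 1$ being the definition of $\KK^{\sstar(n_0+1)}$) gives the identification, so we conclude $\Rk_\KK(\pi_0 \times \pi_1) \ge n_0 + n_1$.

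It remains to dispatch the edge cases. If one of the ranks, say $\Rk_\KK(\pi_0)$, is $0$, then the desired inequality reduces to $\Rk_\KK(\pi_0 \times \pi_1) \ge \Rk_\KK(\pi_1)$. Given any $\KK^{\sstar n}$-configuration $f : \Gamma \to \pi_1(\Mon)$ witnessed by formulas $\varphi_R$, I would fix any realization $\od \in \pi_0(\Mon)$ (which exists since $\pi_0$ is a partial type of $T$) and define $g(a) = (\od, f(a))$; replacing each $\varphi_R$ by the formula obtained by ignoring the $\pi_0$-coordinate variables produces a $\KK^{\sstar n}$-configuration into $\pi_0 \times \pi_1$. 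The same ``pad with $\od$'' trick handles the case $\Rk_\KK(\pi_1) = 0$ (projecting onto the other side) and, by applying it for every $n < \omega$, also the cases where $\Rk_\KK(\pi_0)$ or $\Rk_\KK(\pi_1)$ equals $\infty$, under the usual convention $\infty + k = \infty$.

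There is no real obstacle here: the substantive content is already packaged in Proposition \ref{Prop_ProductTypes}, and everything else amounts to bookkeeping about the recursive definition of $\KK^{\sstar n}$ and the boundary values $0$ and $\infty$ of $\KK$-rank.
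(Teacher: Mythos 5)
Your proposal is correct and follows exactly the route the paper takes: its entire proof is ``Follows immediately from Proposition \ref{Prop_ProductTypes},'' and your argument simply spells out the bookkeeping (associativity of $\sstar$, the rank-$0$ and $\infty$ edge cases) that the authors treat as immediate.
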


\begin{proof}
 Follows immediately from Proposition \ref{Prop_ProductTypes}.
\end{proof}

\begin{defn}\label{Defn_Additive}
 We say $\Rk_\KK$ is \emph{additive} if, for all partial types $\pi_0$ and $\pi_1$, if $\Rk_\KK(\pi_0) < \infty$ and $\Rk_\KK(\pi_1) < \infty$, then
 \[
  \Rk_\KK(\pi_0 \times \pi_1) = \Rk_\KK(\pi_0) + \Rk_\KK(\pi_1).
 \]
\end{defn}

For example, dp-rank is additive in the above sense \cite{kou}.  Similarly, op-dimension is additive \cite{gh2}.  This leads to the following question.

\begin{ques}\label{Ques_KKRankAdd}
 Under what conditions on $\KK$ and $T$ is $\KK$-rank additive?
\end{ques}

We present some partial results to Question \ref{Ques_KKRankAdd} later in this section (see Example \ref{Expl_LONIP} and Example \ref{Expl_GGNIP}).

\begin{lem}\label{Lem_RankMonotonicity}
 If $\pi_0(\oy)$ and $\pi_1(\oy)$ are partial types in $T$ and $\pi_0(\oy) \vdash \pi_1(\oy)$, then
 \[
  \Rk_\KK(\pi_0) \le \Rk_\KK(\pi_1).
 \]
\end{lem}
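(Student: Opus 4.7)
The plan is to reduce this immediately to Lemma \ref{Lem_SmallerTypes} applied to the iterated free superposition $\KK^{\sstar n}$. The key observation is that $\pi_0 \vdash \pi_1$ means $\pi_0(\Mon) \subseteq \pi_1(\Mon)$, and the notion of a $\KK'$-configuration into a partial type $\pi$ depends on $\pi$ only through the set $\pi(\Mon)$ of realizations (see Definition \ref{Defn_KKConfig}).

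First, note that by Proposition \ref{Prop_ATFCstar} and induction on $n$, each $\KK^{\sstar n}$ is itself an algebraically trivial \fraisse class. Lemma \ref{Lem_SmallerTypes} therefore applies with $\KK$ replaced by $\KK^{\sstar n}$ for any $n \ge 1$: if there exists a $\KK^{\sstar n}$-configuration into $\pi_0$, then (via the inclusion $\pi_0(\Mon) \subseteq \pi_1(\Mon)$) the same function is a $\KK^{\sstar n}$-configuration into $\pi_1$.

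The argument then splits into the natural cases of Definition \ref{Defn_KKRank}. If $\Rk_\KK(\pi_0) = 0$, there is nothing to prove since $\Rk_\KK(\pi_1) \ge 0$ by convention. If $\Rk_\KK(\pi_0) = n$ for some $n \ge 1$, condition (1) of Definition \ref{Defn_KKRank} gives a $\KK^{\sstar n}$-configuration into $\pi_0$; by the observation above, this yields a $\KK^{\sstar n}$-configuration into $\pi_1$, so $\Rk_\KK(\pi_1) \ge n$. Finally, if $\Rk_\KK(\pi_0) = \infty$, one obtains $\KK^{\sstar n}$-configurations into $\pi_1$ for every $n < \omega$, forcing $\Rk_\KK(\pi_1) = \infty$ as well.

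There is no real obstacle here: the lemma is a direct monotonicity fact, and the only thing one might worry about is that the definition of rank involves the absence of higher configurations (condition (2)), but since we are only bounding $\Rk_\KK(\pi_1)$ from below using existence of configurations, condition (2) plays no role in the argument.
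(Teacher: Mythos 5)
Your proof is correct and follows the same route as the paper, which simply cites Lemma \ref{Lem_SmallerTypes}; you have just spelled out the (routine) details of applying that lemma to each $\KK^{\sstar n}$ and running through the cases of Definition \ref{Defn_KKRank}. Nothing further is needed.
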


\begin{proof}
 Follows immediately from Lemma \ref{Lem_SmallerTypes}.
\end{proof}

Overloading notation, for each $n \ge 1$, we can define $\Rk_\KK(n)$ as follows: Fix an arbitrary $n$-tuple of variables $\oy$ from $T$ and set
\[
 \Rk_\KK(n) = \Rk_\KK(\oy = \oy).
\]
This is clearly independent of the choice of $\oy$.

\begin{lem}\label{Lem_RkKKOrder}
 For all $1 \le n < m < \omega$,
 \[
  \Rk_\KK(n) \le \Rk_\KK(m).
 \]
\end{lem}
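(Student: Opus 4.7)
The plan is to reduce this immediately to superadditivity (Proposition \ref{Prop_SuperAdd}) by exploiting the fact that the tautological type $\oz = \oz$ on an $m$-tuple decomposes syntactically as a product of tautological types on shorter tuples. Specifically, given $1 \le n < m$, let $\oy$ be an $n$-tuple of variables and let $\oy'$ be an $(m-n)$-tuple of variables disjoint from $\oy$, and set $\oz = (\oy, \oy')$. By the definition of $\pi_0 \times \pi_1$,
\[
 ((\oy = \oy) \times (\oy' = \oy'))(\oy, \oy') \; = \; (\oy = \oy) \cup (\oy' = \oy'),
\]
which has exactly the same realizations as $(\oz = \oz)(\oz)$, namely all of $\Mon^m$. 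Applying Proposition \ref{Prop_SuperAdd} and using that $\Rk_\KK(\oy' = \oy') \ge 0$ by definition, I obtain
\[
 \Rk_\KK(m) \; = \; \Rk_\KK(\oz = \oz) \; \ge \; \Rk_\KK(\oy = \oy) + \Rk_\KK(\oy' = \oy') \; \ge \; \Rk_\KK(\oy = \oy) \; = \; \Rk_\KK(n),
\]
where both identifications $\Rk_\KK(n) = \Rk_\KK(\oy = \oy)$ and $\Rk_\KK(m) = \Rk_\KK(\oz = \oz)$ are justified by the remark (immediately preceding the lemma) that $\Rk_\KK(k)$ is independent of the choice of $k$-tuple of variables.

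There is really no genuine obstacle here; the statement is essentially a one-line corollary of superadditivity. If one prefers to avoid invoking Proposition \ref{Prop_SuperAdd}, one can also argue directly from configurations: given a $\KK^{\sstar r}$-configuration $f : \Gamma \to \Mon^n$ into $\oy = \oy$ witnessed by formulas $\varphi_R$, fix any $\od \in \Mon^{m-n}$ and define $f' : \Gamma \to \Mon^m$ by $f'(a) = (f(a), \od)$, and replace each witnessing formula $\varphi_R(\oy_0, \dots, \oy_{k-1})$ by the formula $\varphi_R(\oy_0, \dots, \oy_{k-1})$ viewed in the extended variables (ignoring the final $m-n$ coordinates of each block). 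This produces a $\KK^{\sstar r}$-configuration into $\oz = \oz$, yielding the same inequality.
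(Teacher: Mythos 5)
Your proof is correct and is essentially the paper's argument: the paper combines a $\KK^{\sstar \ell}$-configuration into $\Mon^n$ with a $\BS$-configuration into $\Mon^{m-n}$ via Proposition \ref{Prop_ProductTypes} and the identity $\KK^{\sstar \ell} \sstar \BS = \KK^{\sstar \ell}$, which is exactly the content of your appeal to superadditivity with one summand of rank $\ge 0$ (and your padding construction is the same mechanism made explicit). No issues.
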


\begin{proof}
 Suppose $\Rk_\KK(n) = \ell$.  Then, there exists a $\KK^{\sstar \ell}$-configuration into $\Mon^n$.  Clearly there exists an $\BS$-configuration into $\Mon^{m-n}$.  By Proposition \ref{Prop_ProductTypes}, there exists a $(\KK^{\sstar \ell} \sstar \BS)$-configuration into $\Mon^m$.  Since $\KK^{\sstar \ell} \sstar \BS = \KK^{\sstar \ell}$, we get that $\Rk_\KK(m) \ge \ell$.
\end{proof}

Using the terminology of Section \ref{Sect_DividingLines}, note that $T \in \cC_\KK$ if and only if $T$ is a complete theory with infinite models such that $\Rk_\KK(n) > 0$ for some $n \ge 1$.  Moreover, by Corollary \ref{Cor_StarSim}, if $T \in \cC_{\KK}$, then $T$ has types with arbitrarily large $\KK$-rank and, if $T \notin \cC_{\KK}$ (but $T$ is a complete theory with infinite models), then all types in $T$ have $\KK$-rank $0$.

In the following subsections, we will analyze $\KK$-rank for particular choices of $\KK$.


\subsection{Linear Order Rank}\label{Subsect_LOR}

For this subsection, we consider the \algtriv \fraisse class $\LO$.  For any $m \ge 1$, let $L_m$ be the language of $\LO^{\sstar m}$, which consists of $m$ binary relation symbols $<_i$ for $i < m$.

It turns out that $\LO$-rank is closely related to op-dimension.  For any partial type in any theory, we get that the op-dimension is an upper bound for the $\LO$-rank.  Moreover, if the target theory has NIP, then op-dimension coincides with $\LO$-rank.

\begin{prop}\label{Prop_opDimUpperBound}
 For any partial type $\pi$ in any theory $T$,
 \[
  \Rk_\LO(\pi) \le \opDim(\pi).
 \]
\end{prop}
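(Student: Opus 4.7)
The plan is to show directly that any $\LO^{\sstar n}$-configuration into $\pi$ produces an IRD-pattern of depth $n$ and length $\omega$; applying this for every finite $n \le \Rk_\LO(\pi)$ then yields the desired inequality (including the $\infty$ case).

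First, I would fix an $\LO^{\sstar n}$-configuration $f : \Gamma \to \pi(\Mon)$, where $\Gamma$ is the \fraisse limit of $\LO^{\sstar n}$, together with the witnessing $L(\Mon)$-formulas $\psi_i(\oy_0, \oy_1)$ for $i < n$ satisfying $\Gamma \models a <_i b \mathiff \Mon \models \psi_i(f(a), f(b))$ for all $a, b \in \Gamma$. Next, I would inductively select pairwise distinct elements $b_{i,j} \in \Gamma$ (for $i < n$, $j < \omega$) such that, for each fixed $i$, the sequence $(b_{i,j})_{j<\omega}$ is strictly $<_i$-increasing. This can be arranged by repeatedly invoking the strong amalgamation property of $\LO^{\sstar n}$ (Proposition \ref{Prop_ATFCstar}) together with Proposition \ref{Prop_WeakHom}: because the $n$ orders of $\LO^{\sstar n}$ are freely superposed, the position of any new element can be prescribed independently in each $<_i$.

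I would then set $\oc_{i,j} := f(b_{i,j})$ and take the IRD-pattern formulas to be $\varphi_i(\oy, \oz_i) := \psi_i(\oy, \oz_i)$. To verify the pattern condition for an arbitrary $g : n \to \omega$, I would locate $a \in \Gamma$ satisfying $b_{i,g(i)} <_i a <_i b_{i,g(i)+1}$ for every $i < n$ simultaneously. The existence of such an $a$ again follows from strong amalgamation applied to the finite substructure $\{b_{i,g(i)}, b_{i,g(i)+1} : i < n\}$ extended by a fresh point whose place in each $<_i$ is freely prescribed to the required open interval. Once $a$ is in hand, $f(a) \models \pi$ and the equivalences
\[
 \Mon \models \psi_i(f(a), f(b_{i,j})) \mathiff a <_i b_{i,j} \mathiff g(i) < j,
\]
which follow from trichotomy and transitivity of each $<_i$, deliver the required IRD-pattern.

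The only step that demands any care is verifying that the prescribed finite $\LO^{\sstar n}$-structures embed into $\Gamma$ with the desired order types in each coordinate simultaneously; I expect this to be a routine consequence of the strong amalgamation framework developed in Section \ref{Sect_ATFC}, so no serious obstacle is anticipated.
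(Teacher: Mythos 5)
Your proof is correct and follows essentially the same route as the paper's: both arguments convert an $\LO^{\sstar n}$-configuration into an IRD-pattern by exploiting the fact that a new point of the \fraisse limit of $\LO^{\sstar n}$ can be placed in independently prescribed cuts of each order $<_i$. The only cosmetic difference is that you build the length-$\omega$ pattern directly via the extension property of the \fraisse limit (note that the relevant fact is weak homogeneity of $\Gamma$ itself, i.e.\ Lemma 6.1.4 of \cite{littlehodges}, rather than Proposition \ref{Prop_WeakHom} as cited), whereas the paper embeds a finite gadget on ${}^m(2n)$ for each $n$ and then invokes compactness.
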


\begin{proof}
 Assume $\Rk_\LO(\pi) \ge m$.  Let $\Mon$ be a monster model for $T$ and let $(I, f_A)_{A \in \LO^{\sstar m}}$ be an $\LO^{\sstar m}$-configuration.  So, for all $A \in \LO^{\sstar m}$, for all $a, b \in A$, for all $i < m$,
 \[
  A \models a <_i b \mathiff \Mon \models I(<_i)(f_A(a), f_A(b)).
 \]
 Fix $n < \omega$.  We will use this configuration to build an IRD-pattern of depth $m$ and length $n$ in $\pi$.  Begin by creating an $A \in \LO^{\sstar m}$ with universe ${}^m (2n)$ by setting, for all $g, h \in A$ and $i < m$, $g <_i h$ if $g(i) < h(i)$ or $g(i) = h(i)$ and $g(j) < h(j)$ for minimal $j < m$ such that $g(j) \neq h(j)$.  Clearly, for all $i < m$, for all $g, h \in A$,
 \[
  g(i) < h(i) \Longrightarrow A \models g <_i h
 \]
 (but not conversely).
 
 For $g \in {}^m n$, let $\od_g = f_A(g')$, where $g' \in {}^m (2n)$ is such that $g'(i) = 2g(i) + 1$ for all $i < m$.  For $j < n$, let $\oc_j = f_A(h')$, where $h' \in {}^m (2n)$ is such that $h'(i) = 2j$ for all $i < m$.  Then, for all $g \in {}^m n$, $i < m$, and $j < n$,
 \[
  \Mon \models I(<_i)(\od_g, \oc_j) \mathiff 2g(i) + 1 < 2j \mathiff g(i) < j.
 \]
 Thus, for each $g \in {}^m n$,
 \[
  \pi(\oy) \cup \{ I(<_i)(\oy, \oc_j)^{\condiff g(i) < j} : i < m, j < n \}
 \]
 is consistent.  This is an IRD-pattern of depth $m$ and length $n$ in $\pi$.  Since $n$ was arbitrary, by compactness, $\pi$ has op-dimension $\ge m$.
\end{proof}

Before showing that $\LO$-rank and op-dimension coincide for NIP theories, we mention a trick (essentially Proposition 1.18 of \cite{gh2}) that follows from the fact that $\LO^{\sstar m}$ has the Ramsey property.

\begin{rem}\label{Rem_GenIndisc}
 Suppose that $T$ is a complete $L$-theory for any language $L$, $\pi$ is a partial type in $T$, $\KK$ is a Ramsey class with \fraisse limit $\Gamma$, and there exists a $\KK$-configuration into $\pi$.  Then, by Theorem 3.12 of \cite{scow14}, the function $f$ in Lemma \ref{Lem_FiniteConfig} can be chosen so that, for all $n < \omega$, $\oa, \ob \in \Gamma^n$,
 \[
  \qftp_{L_0}(\oa) = \qftp_{L_0}(\ob) \Longrightarrow \tp_L(f(\oa)) = \tp_L(f(\ob)).
 \]
 By Corollary 1.4(2) of \cite{bod1}, $\LO^{\sstar m}$ is a Ramsey class; therefore, in particular, this holds when $\KK = \LO^{\sstar m}$.
\end{rem}

\begin{prop}\label{Prop_NIPLOopdim}
 If $T$ has NIP, then, for all partial types $\pi$,
 \[
  \Rk_\LO(\pi) = \opDim(\pi).
 \]
\end{prop}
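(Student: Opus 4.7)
The plan is to prove $\opDim(\pi) \le \Rk_\LO(\pi)$ under NIP; the opposite inequality is Proposition \ref{Prop_opDimUpperBound}.  Suppose $\opDim(\pi) \ge m$, witnessed by an IRD-pattern of depth $m$ and length $\omega$: formulas $\varphi_i(\oy, \oz_i)$ for $i < m$, tuples $\oc_{i, j} \in \Mon^{|\oz_i|}$ for $i < m, j < \omega$, and realizations $\ob_g \models \pi(\oy) \cup \{ \varphi_i(\oy, \oc_{i,j})^{\condiff g(i) < j} : i < m, j < \omega \}$ for each $g \in \omega^m$.  The goal is to construct an $\LO^{\sstar m}$-configuration $f : \Gamma_m \to \pi(\Mon)$, where $\Gamma_m$ denotes the \fraisse limit of $\LO^{\sstar m}$.

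First I would restrict to the subset $I \subseteq \omega^m$ of tuples whose coordinates are pairwise distinct across all positions; on $I$, the relations $g <_i h$ iff $g(i) < h(i)$ are strict linear orders, so $(I, <_0, \ldots, <_{m-1})$ is an object of $\LO^{\sstar m}$.  Using the Ramsey property of $\LO^{\sstar m}$ and the extraction technique of Remark \ref{Rem_GenIndisc}, I would extract a generalized $\Gamma_m$-indexed indiscernible sequence $(\ob^*_g, \oc^*_{0, g}, \ldots, \oc^*_{m-1, g})_{g \in \Gamma_m}$ whose EM-type is locally based on $(\ob_g, \oc_{0, g(0)}, \ldots, \oc_{m-1, g(m-1)})_{g \in I}$.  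By the extraction, $\ob^*_g \models \pi$ and $\varphi_i(\ob^*_g, \oc^*_{i, h})$ holds iff $g <_i h$ in $\Gamma_m$.  This step uses Ramsey but not NIP.

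Next I would invoke NIP to produce two-variable formulas $\psi_i(\oy_0, \oy_1) \in L(\Mon)$ defining $<_i$ on the extracted sequence.  The natural candidate is $\psi_i(\oy_0, \oy_1) := \exists \oz_i ( \varphi_i(\oy_0, \oz_i) \wedge \neg \varphi_i(\oy_1, \oz_i) )$; the forward implication $g <_i h \Rightarrow \psi_i(\ob^*_g, \ob^*_h)$ is immediate by taking $\oz_i = \oc^*_{i, h}$.  The reverse implication $g \ge_i h \Rightarrow \neg \psi_i(\ob^*_g, \ob^*_h)$ is where NIP enters: any ``wrong-direction'' witness $\oz_i$ would, via generalized indiscernibility, propagate to every pair of the same quantifier-free type in $\Gamma_m$, and combining this with the forward direction along a dense chain of $\Gamma_m$ would yield, by compactness, an IP-pattern or unbounded alternation for $\varphi_i(\oy, \oz_i)$, contradicting NIP.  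Setting $f(g) := \ob^*_g$ would then give the required $\LO^{\sstar m}$-configuration into $\pi$, witnessing $\Rk_\LO(\pi) \ge m$.

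The hard part will be making precise the NIP-based contradiction from a hypothetical wrong-direction witness.  Refining the choice of $\psi_i$ (possibly using universal quantifiers over $\oz_i$, Boolean combinations of several $\varphi_i$-instances, or honest definitions in the sense of NIP theory) and invoking Baldwin--Saxl-style alternation bounds along $\Gamma_m$ should carry the argument through, but the careful verification is the technical heart of the proof.
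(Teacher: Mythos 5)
Your overall architecture matches the paper's: one inequality is Proposition \ref{Prop_opDimUpperBound}, and for the other you pass from an IRD-pattern to a map $f$ indexed by the \fraisse limit of $\LO^{\sstar m}$, upgrade it to a generalized indiscernible via the Ramsey property (exactly Remark \ref{Rem_GenIndisc}), and then try to use NIP to produce two-variable formulas $\psi_i(\oy_0,\oy_1)$ defining the orders on the image. The gap is precisely at the step you flag as ``the technical heart,'' and your candidate formula does not survive it. For $\psi_i(\oy_0,\oy_1) := \exists \oz_i\,(\varphi_i(\oy_0,\oz_i)\wedge\neg\varphi_i(\oy_1,\oz_i))$, the reverse implication $h <_i g \Rightarrow \neg\psi_i(\ob^*_g,\ob^*_h)$ asks that \emph{no} parameter in $\Mon$ separates $\ob^*_g$ from $\ob^*_h$ in that direction; but in almost any theory some instance of $\varphi_i$ separates two distinct tuples (already $\varphi_i(\oy,\oz)$ behaving like equality on some parameters does this in a stable theory), so the implication simply fails. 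Your proposed rescue --- that a wrong-direction witness propagates by indiscernibility and yields an IP-pattern --- does not go through either: indiscernibility propagates only the \emph{existence} of some witness to every pair of the same quantifier-free type, so each pair gets its own parameter, and a family of per-pair separating parameters does not assemble into a single parameter with unbounded alternation. No contradiction with NIP arises from this configuration alone.

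What the paper does instead is quantitatively exploit the VC-dimension bound rather than alternation along a chain. Writing all $\varphi_i$ as a single $\varphi(\oy,\oz)$ of VC-dimension $<k$, for each pair of distinct $2$-types $p_t, p_{t'}$ ($t,t'\in{}^m2$) one takes the $k$-type $q_t$ obtained by iterating $p_t$ along a chain; non-shattering gives a sign pattern $s\in{}^k2$ that is \emph{not} realizable over $q_t^*$, while the IRD-pattern guarantees that every $<_{i_0}$-cut pattern \emph{is} realizable. One then swaps adjacent pairs of the chain from type $p_t$ to type $p_{t'}$ one at a time until $s$ first becomes realizable; at that transition, freezing the $k-2$ spectator coordinates at elements $f(a_j)$ of the indiscernible yields a formula $\psi_{t,t'}(\oy_0,\oy_1)$ (an existential over $\oz$ of a conjunction of $k$ signed instances of $\varphi$, with $k-2$ parameters from $\image(f)$) that holds on $p_t$-pairs and fails on $p_{t'}$-pairs of a suitable copy $\Gamma'\cong\Gamma$. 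The defining formulas $\psi_i$ are then Boolean combinations of the $\psi_{t,t'}$. So the missing ingredient in your plan is this shattering/swapping construction; without it, or some equally concrete substitute, the proof is incomplete.
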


This proof loosely follows the proof of Theorem 3.4 of \cite{ghs}, modified to fit into our current framework.

\begin{proof}
 The previous proposition gives us $\Rk_\LO(\pi) \le \opDim(\pi)$.  To prove the converse, suppose that $\pi$ has op-dimension $\ge m$.  Therefore, there exists an IRD-pattern of depth $m$ and length $\omega$ in $\pi$.  That is, there exist $L(\Mon)$-formulas $\varphi_i(\oy, \oz_i)$ for $i < m$ and $\oc_{i,j} \in \Mon^{|\oz_i|}$ for $i < m$ and $j < \omega$ such that, for all $g : m \rightarrow \omega$, the partial type
 \[ 
  \pi(\oy) \cup \{ \varphi_i(\oy, \oc_{i,j})^{\condiff g(i) < j} : i < m, j < \omega \}
 \]
 is consistent.  Say it is realized by $\ob_g \in \Mon^{|\oy|}$.  By coding tricks, we may assume that there exists an $L$-formula $\varphi(\oy, \oz)$ such that $\varphi_i = \varphi$ for all $i < m$.
 \if\papermode0
 { \color{violet}
 
  \textit{More Detail:} Let
  \[
   \varphi(\oy, \oz_0, \dots, \oz_{m-1}, w_0, \dots, w_{m-1}, u) = \bigwedge_{i < m} (w_i = u) \rightarrow \varphi_i(\oy, \oz_i).
  \]
  Choose $a \neq b$ in $\Mon$ and, for each $i < m$ and $j < \omega$, let
  \[
   \od_{i,j} = \oc_{i,j} \concat (a, \dots, a, b, a, \dots, a, b)
  \]
  (where the first $b$ appears in the $i$th entry).  Then, it is clear that
  \[
   \Mon \models \forall \oy \left( \varphi_i(\oy, \oc_{i,j}) \leftrightarrow \varphi(\oy, \od_{i,j}) \right).
  \]
 }
 \fi
 
 First, we create a function $f : \Gamma \rightarrow \pi(\Mon)$, where $\Gamma$ is the \fraisse limit of $\LO^{\sstar m}$.  Fix $A \in \LO^{\sstar m}$ and suppose that $n = |A|$.  Choose an injective function $\eta : A \rightarrow {}^m n$ such that, for all $a, a' \in A$ and for all $i < m$, $\eta(a)(i) < \eta(a')(i)$ if and only if $a <_i a'$.  For all $i < m$, $j < n$, and $a \in A$, notice that
 \[
  \Mon \models \varphi(\ob_{\eta(a)}, \oc_{i,j}) \mathiff \eta(a)(i) < j.
 \]
 Therefore, for all $i < m$, for all $<_i$-cuts $Y$ of $A$, there exists $\oc \in \Mon^{|\oz|}$ such that
 \[
  Y = \{ a \in A : \Mon \models \varphi(\ob_{\eta(a)}, \oc) \}.
 \]
 Consider the function $a \mapsto \ob_{\eta(a)}$ from $A$ to $\pi(\Mon)$.  By compactness, there exists a function $f : \Gamma \rightarrow \pi(\Mon)$ such that, for all $i < m$, for all $<_i$-cuts $Y$ of $\Gamma$, there exists $\oc \in \Mon^{|\oz|}$ such that
 \[
  Y = \{ a \in \Gamma : \Mon \models \varphi(f(a), \oc) \}.
 \]
 By Remark \ref{Rem_GenIndisc}, we can assume that $f : \Gamma \rightarrow \pi(\Mon)$ is a generalized indiscernible.  Therefore, for each $k < \omega$ and each quantifier-free $L_m$-type $p(x_0, \dots, x_{k-1})$, we have an associated $L$-type $p^*(\oy_0, \dots, \oy_{k-1})$ (over the same parameters as $\pi$ and $\varphi$) extending $\pi(\oy_0) \cup \dots \cup \pi(\oy_{k-1})$ such that, for all $\oa \in \Gamma^k$, if $\oa \models p$, then $f(\oa) \models p^*$.
 \if\papermode0
 { \color{violet}
 
  \textit{More Detail:} For each $i < m$, let $\cC_i$ be the set of all $<_i$-cuts of $\Gamma$.  Consider the type
  \begin{align*}
   & \Sigma( (\oy_a)_{a \in \Gamma}, (\oz_Y)_{Y \in \cC_i, i < m} ) = \\ & \ \bigcup \{ \pi(\oy_a) : a \in \Gamma \} \cup \\ & \ \{ \varphi(\oy_a, \oz_Y)^{\condiff a \in Y} : a \in \Gamma, Y \in \cC_i, i < m \}
  \end{align*}
  Fix a finite $\Sigma_0 \subseteq \Sigma$.  Let $A \subseteq \Gamma$ be finite such that only variables of the form $\oy_a$ for $a \in A$ are mentioned in $\Sigma_0$.  Then, we get that $\Sigma_0$ is consistent.  By compactness, $\Sigma$ is consistent, say witnessed by $(\od'_a)_{a \in \Gamma}$.  Let $f' : \Gamma \rightarrow \pi(\Mon)$ be given by $f'(a) = \od'_a$.
  
  Next, let $C$ be a small subset of $\Mon$ such that $\varphi$ and $\pi$ are over $C$.  Consider the type $\Sigma'$ which is $\Sigma$ together with
  \[
   \{ \psi(\oy_{\oa}) \leftrightarrow \psi(\oy_{\ob}) : k < \omega, \oa, \ob \in A^k, \qftp_{L_0}(\oa) = \qftp_{L_0}(\ob), \psi \in L(C) \}.
  \]
  For any finite $\Sigma'_0 \subseteq \Sigma'$, let $A \subseteq \Gamma$ be finite such that only variables of the form $\oy_a$ for $a \in A$ are mentioned in $\Sigma'_0$.  Moreover, let $\Psi(\oy_0, \dots, \oy_{k-1})$ be a finite set of $L(C)$-formulas such that only formulas of the form $\psi(\oy_{\oa}) \leftrightarrow \psi(\oy_{\ob})$ for $\psi \in \Psi$ and $\oa, \ob \in A^k$ are mentioned in $\Sigma'_0$.  Let $c : \Gamma^k \rightarrow {}^{\Psi_0} 2$ be given by
  \[
   c(\oa)(\psi) = 1 \mathiff \Mon \models \psi(f'(\oa)).
  \]
  Since $\LO^{\sstar m}$ has the Ramsey Property, there exists an embedding $g : A \rightarrow \Mon$ such that $c$ is monochromatic on $g(A)$.  Therefore, $g(A)$ realizes $\Sigma'_0$.  By compactness, $\Sigma'$ is consistent, say witnessed by $(\od_a)_{a \in \Gamma}$.  Let $f : \Gamma \rightarrow \pi(\Mon)$ be given by $f(a) = \od_a$.  This works.
 }
 \fi
 
 Since $T$ has NIP, $\varphi(\oy, \oz)$ has VC-dimension $< k$ for some $k < \omega$.  In other words,
 \[
  \Mon \models \neg \exists \oy_0 \dots \exists \oy_{k-1} \bigwedge_{s \in {}^k 2} \left( \exists \oz \bigwedge_{\ell < k} \varphi(\oy_\ell, \oz)^{s(\ell)} \right).
 \]
 
 For each $t \in {}^m 2$, define the quantifier-free $2$-$L_m$-type $p_t(x_0, x_1)$ as follows: $p_t \vdash x_0 \neq x_1$ and, for all $i < m$, $p_t \vdash x_0 <_i x_1$ if and only if $t(i) = 1$.  We can extend this to a quantifier-free $k$-$L_m$-type $q_t(x_0, \dots, x_{k-1})$ as follows: for all $\ell \neq \ell'$, $q_t \vdash x_\ell \neq x_{\ell'}$ and, for all $i < m$, for all $\ell < k - 1$, $q_t \vdash x_\ell <_i x_{\ell+1}$ if and only if $t(i) = 1$.
 
 Now fix $t, t' \in {}^m 2$ distinct.  We may assume, by perhaps swapping $t$ and $t'$, that there exists $i_0 < m$ such that $t(i_0) = 1$ and $t'(i_0) = 0$.  Since $\varphi$ has VC-dimension $< k$, there exists $s \in {}^k 2$ such that
 \[
   q_t^*(\oy_0, \dots, \oy_{k-1}) \vdash \neg \exists \oz \bigwedge_{\ell < k} \varphi(\oy_\ell, \oz)^{s(\ell)}.
 \]
 We define $q_r$ and $\sigma_r$ recursively as follows: Let $q_0 = q_t$ and $\sigma_0$ the identity permutation on $k$.  Fix $r \ge 0$ and assume that we have constructed $q_r$ and $\sigma_r$ such that
 \begin{equation}\label{Eq_qrandsigmar}
  q_r(x_0, \dots, x_{k-1}) \vdash x_{\sigma_r(0)} <_{i_0} x_{\sigma_r(1)} <_{i_0} \dots <_{i_0} x_{\sigma_r(k-1)}.
 \end{equation}  
 Then, choose $\ell_r < k-1$ minimal such that $s(\sigma_r(\ell_r)) = 0$ and $s(\sigma_r(\ell_r + 1)) = 1$.  Note that, if no such $\ell_r$ exists, then
 \[
  q_r^*(\oy_0, \dots, \oy_{k-1}) \vdash \exists \oz \bigwedge_{\ell < k} \varphi(\oy_{\sigma_r(\ell)}, \oz)^{s(\sigma_r(\ell))},
 \]
 since it is a $<_{i_0}$-cut.  In particular, $\ell_0$ exists.
 
 Let $\sigma_{r+1} = \sigma_r \circ (\ell_r \ \ell_r + 1)$ and let $q_{r+1}$ be $q_r$ except, for each $i < m$, we replace
 \[
  (x_{\ell_r} <_i x_{\ell_r+1})^{t(i)} \text{ with } (x_{\ell_r} <_i x_{\ell_r+1})^{t'(i)}.
 \]
 In particular, we maintain that $q_{r+1}$ and $\sigma_{r+1}$ satisfy \eqref{Eq_qrandsigmar}.  Terminate the construction when we first have
 \[
  q_{r+1}^*(\oy_0, \dots, \oy_{k-1}) \vdash \exists \oz \bigwedge_{\ell < k} \varphi(\oy_\ell, \oz)^{s(\ell)}
 \]

 Choose $\oa \in \Gamma^k$ such that $\oa \models q_r$.  Let
 \begin{align*}
  \psi_{t,t'}(\oy_0, \oy_1) := & \ \neg \exists \oz \bigl( \varphi(f(a_0), \oz)^{s(0)} \wedge \dots \wedge \varphi(f(a_{\ell_r-1}), \oz)^{s(\ell_r-1)} \wedge \\ & \varphi(\oy_0, \oz)^{s(\ell_r)} \wedge \varphi(\oy_1, \oz)^{s(\ell_r+1)} \wedge \\ & \varphi(f(a_{\ell_r+2}), \oz)^{s(\ell_r+2)} \wedge \dots \wedge \varphi(f(a_{k-1}), \oz)^{s(k-1)} \bigr).
 \end{align*}
 Consider the set
 \[
  \Gamma' = \{ a \in \Gamma : (a_0, \dots, a_{\ell_r-1}, a, a_{\ell_r+2}, \dots, a_{k-1}) \models q_r |_{x_0, \dots, x_{\ell_r}, x_{\ell_r+2}, \dots, x_{k-1}} \}.
 \]
 Since $\LO^{\sstar m}$ is \selfsim, $\Gamma' \cong \Gamma$.  Notice that, for all $a, b \in \Gamma'$,
 \begin{itemize}
  \item If $(a,b) \models p_t$, then $(a_0, \dots, a_{\ell_r-1}, a, b, a_{\ell_r+2}, \dots, a_{k-1}) \models q_r$, hence $\Mon \models \psi_{t,t'}(f(a), f(b))$.
  \item If $(a,b) \models p_{t'}$, then $(a_0, \dots, a_{\ell_r-1}, a, b, a_{\ell_r+2}, \dots, a_{k-1}) \models q_{r+1}$, hence $\Mon \models \neg \psi_{t,t'}(f(a), f(b))$.
 \end{itemize}
 Replace $\Gamma$ with $\Gamma'$ and repeat this process for all distinct $t, t' \in {}^m 2$.
 
 For $t \in {}^m 2$, let
 \[
  \psi_t(\oy_0, \oy_1) := \bigwedge_{t' \in {}^m 2, t' \neq t} \psi_{t,t'}(\oy_0, \oy_1).
 \]
 Finally, for $i < m$, let
 \[
  \psi_i(\oy_0, \oy_1) := \bigvee_{t \in {}^m 2, t(i) = 1} \psi_t(\oy_0, \oy_1).
 \]
 Then, it is clear that, for all $a, b \in \Gamma$ and $i < m$,
 \[
  a <_i b \text{ if and only if } \Mon \models \psi_i(f(a), f(b)).
 \]
 By Lemma \ref{Lem_FiniteConfig}, there exists an $\LO^{\sstar m}$-configuration into $\pi$.  Thus, $\Rk_{\LO}(\pi) \ge m$.
\end{proof}

Note that this proof uses generalized indiscernibles; this is the only such use in this paper.  In future work, we would like to remove the need for indiscernibility so that arguments such as these can be generalized to \fraisse classes without the Ramsey Property.

\begin{expl}[NIP]\label{Expl_LONIP}
 Suppose $T$ has NIP.  Then $\LO$-rank is precisely op-dimension.  In particular, $\LO$-rank is additive (see Theorem 2.2 of \cite{gh2}).
 
 If $T$ is distal, then op-dimension coincides with dp-rank (see Remark 3.2 of \cite{gh2}).  Therefore, for distal $T$, $\LO$-rank is dp-rank.
\end{expl}

In the next example, we show that $\LO$-rank can jump from $0$ to $\infty$ in a theory with the independence property.

\begin{expl}
 Let $L$ consist of infinitely many binary relation symbols $R_i$ for $i < \omega$ and let $T$ be the model companion of the theory of $L$-hypergraphs.  Then,
 \[
  \Rk_\LO(1) = 0 \text{ and } \Rk_\LO(2) = \infty.
 \]
\end{expl}
 
\begin{proof}
 Let $\Mon$ be a monster model for $T$ and let $\Gamma$ be the \fraisse limit of $\LO$.  Towards a contradiction, suppose there exists an $\LO$-configuration into $\Mon$ over some small $C \subseteq \Mon$.  Since $\LO$ is \selfsim, there exists a function $f : \Gamma \rightarrow \Mon$ satisfying the conclusion of Proposition \ref{Prop_WeakGoodEmbedding}.  Thus, since $L$ is a binary language, for all $a, b \in \Gamma$, the type $\tp_L(f(a), f(b) / C)$ is determined by the type $\tp_L(f(a), f(b))$.  On the other hand, since $T$ is symmetric,
 \[
  \tp_L(f(a), f(b)) = \tp_L(f(b), f(a)).
 \]
 Therefore, $\Gamma \models a < b$ if and only if $\Gamma \models b < a$, a contradiction.

 On the other hand, fix $m < \omega$.  For each $i < m$, let
 \[
  I(<_i)(y_{0,0}, y_{0,1}, y_{1,0}, y_{1,1}) = R_i(y_{0,0}, y_{1,1}).
 \]
 For each $A \in \LO^{\sstar m}$, there exists a function $f_A : A \rightarrow \Mon^2$ such that, for all $a, b \in A$ and $i < m$,
 \[
  A \models a <_i b \mathiff \Mon \models I(<_i)(f_A(a), f_A(b)).
 \]
 \if\papermode0
 { \color{violet}
 
  \emph{More Detail:} To see this, first endow $A \times 2$ with a $L$-structure by setting, for all $a, b \in A$ and $i < m$,
   \[
    R_i((a, 0), (b, 1)) \text{ if } a <_i b.
   \]
   Then, there exists an embedding $g : A \times 2 \rightarrow \Mon$; let $f(a) = (g(a,0), g(a,1))$.  Then, it is clear that, for all $a, b \in A$ and $i < m$,
  \[
   A \models a <_i b \mathiff \Mon \models \varphi_i(f(a), f(b)).
  \]
 }
 \fi
 Hence, $(I, f_A)_{A \in \LO^{\sstar m}}$ is an $\LO^{\sstar m}$-configuration into $\Mon^2$.
\end{proof}


\subsection{Equivalence Class Rank}\label{Subsect_EQR}

For this subsection, we consider the \algtriv \fraisse class $\EE$.  For any $m \ge 1$, let $L_m$ be the language of $\EE^{\sstar m}$, which consists of $m$ binary relation symbols, $E_i$ for $i < m$.

It turns out that $\EE$-rank is bounded above by dp-rank.

\begin{prop}\label{Prop_EEUpperBound}
 For any partial type $\pi$ in any theory $T$,
 \[
  \Rk_\EE(\pi) \le \dpRk(\pi).
 \]
\end{prop}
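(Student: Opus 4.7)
The plan is to mirror the structure of the proof of Proposition \ref{Prop_opDimUpperBound}, replacing linear orders and IRD-patterns with equivalence relations and ICT-patterns. Assume $\Rk_\EE(\pi) \ge m$, so there exists an $\EE^{\sstar m}$-configuration $f : \Gamma \to \pi(\Mon)$, where $\Gamma$ is the \fraisse limit of $\EE^{\sstar m}$, witnessed by $L(\Mon)$-formulas $\varphi_i(\oy_0, \oy_1)$ for $i < m$ satisfying $\Gamma \models a E_i b \mathiff \Mon \models \varphi_i(f(a), f(b))$. The goal is to build, for each $n < \omega$, an ICT-pattern of depth $m$ and length $n$ inside $\pi$; by compactness, this yields $\dpRk(\pi) \ge m$.

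Using Remark \ref{Rem_EEmuniverse}, I would identify $\Gamma$ with $\omega^{m+1}$, with $E_i$ interpreted as equality on the $i$th coordinate. Fix $n < \omega$. For each function $g : m \to n$, choose an element $a_g \in \omega^{m+1}$ with $a_g(i) = g(i)$ for all $i < m$, and pick the $m$th coordinate $a_g(m)$ in a way that makes all the $a_g$'s pairwise distinct across different $g$'s (for example, $a_g(m) = \sum_{i<m} g(i)\, n^i$). For each $i < m$ and $j < n$, choose an element $b_{i,j} \in \omega^{m+1}$ with $b_{i,j}(i) = j$ and with the remaining coordinates chosen from a fixed range disjoint from $\{0, \ldots, n-1\}$ so that $b_{i,j} \neq a_g$ and $b_{i,j} \, E_{i'} \, a_g$ fails whenever $i' \neq i$; this is only needed for cleanliness, since we will only use $\varphi_i$ at the pair $(f(a_g), f(b_{i,j}))$.

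Now set $\ob_g = f(a_g)$ and $\oc_{i,j} = f(b_{i,j})$. For any $g : m \to n$, any $i < m$, and any $j < n$, the defining property of the configuration gives
\[
\Mon \models \varphi_i(\ob_g, \oc_{i,j}) \mathiff \Gamma \models a_g \, E_i \, b_{i,j} \mathiff a_g(i) = b_{i,j}(i) \mathiff g(i) = j.
\]
Since $\ob_g \in \pi(\Mon)$, this shows that for every $g : m \to n$ the partial type
\[
\pi(\oy) \cup \{ \varphi_i(\oy, \oc_{i,j})^{\condiff g(i) = j} : i < m, j < n \}
\]
is consistent, realized by $\ob_g$. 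This is precisely an ICT-pattern of depth $m$ and length $n$ in $\pi$, and since $n$ was arbitrary, by compactness we obtain $\dpRk(\pi) \ge m$.

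There is essentially no hard step here: the universe description from Remark \ref{Rem_EEmuniverse} already encodes an ICT-style product structure, and the only bookkeeping is to arrange that the $a_g$'s are distinct (so we are picking $n^m$ genuinely different elements of $\Gamma$) and that the $b_{i,j}$'s pin down a value on exactly the $i$th coordinate. The main conceptual point is that, in contrast to the $\LO$ case where one must carefully place cuts, equivalence relations in the free superposition factor through coordinates and directly yield ``$g(i) = j$'' style conditions, which is exactly the shape of an ICT-pattern.
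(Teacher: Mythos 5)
Your proof is correct and follows essentially the same route as the paper: embed an $n^m$-grid into the \fraisse limit of $\EE^{\sstar m}$ (via Remark \ref{Rem_EEmuniverse}) and pull back through the configuration $f$ to read off an ICT-pattern. The only cosmetic difference is that the paper takes its column parameters $\oc_j$ to be the images of the constant functions (so the same parameter serves every row $i$), whereas you use a separate witness $b_{i,j}$ for each row; both satisfy the definition of an ICT-pattern, so nothing is gained or lost.
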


\begin{proof}
 This follows similarly to the proof of Proposition \ref{Prop_opDimUpperBound}.  Suppose $\Rk_\EE(\pi) \ge m$.  Let $\Gamma$ be the \fraisse limit of $\EE^{\sstar m}$ and let $\Mon$ be a monster model for $T$.  Let $(I, f_A)_{A \in \EE^{\sstar m}}$ be an $\EE^{\sstar m}$-configuration into $\Mon$.  So, for all $i < m$, for all $A \in \EE^{\sstar m}$, for all $a, b \in A$,
 \[
  A \models E_i(a,b) \mathiff \Mon \models I(E_i)(f_A(a), f_A(b)).
 \]
 Fix $n < \omega$.  Create $A \in \EE^{\sstar m}$ with universe ${}^m n$ by setting, for all $i < m$ and $g, h \in A$,
 \[
  A \models E_i(g, h) \mathiff g(i) = h(i).
 \]
 For $g \in A$, let $\oc_g = f_A(g)$.  Thus, for all $i < m$, for all $g, h \in A$,
 \[
  \Mon \models I(E_i)(\oc_g, \oc_h) \mathiff g(i) = h(i).
 \]
 For each $j < n$, overloading notation, let $j$ denote the function from $m$ to $n$ that is constantly $j$.  Then, for each $g \in {}^m n$, we have that
 \[
  \pi(\oy) \cup \{ I(E_i)(\oy, \oc_j)^{\condiff g(i) = j} : i < m, j < n \}
 \]
 is consistent (realized by $\oc_g$).  This is an ICT-pattern of depth $m$ and length $n$ in $\pi$.  Since $n$ was arbitrary, by compactness, $\pi$ has dp-rank $\ge m$.
\end{proof}

Moreover, $\EE$-rank is bounded below by the dimension of the target type (assuming the target theory has infinite models).

\begin{prop}\label{Prop_EELowerBound}
 For all theories $T$ with infinite models,
 \[
  \Rk_{\EE}(m) \ge m.
 \]
\end{prop}

\begin{proof}
 Since $T$ has infinite models, there exists an injective function $g : \omega \rightarrow \Mon$.  Fix a tuple of variables $\oy$ and let $m = |\oy|$.  Fix $A \in \EE^{\sstar m}$ and choose $n < \omega$ such that $A$ embeds into $n^{m+1}$ viewed as an element of $\EE^{\sstar m}$ as in Lemma \ref{Lem_EEmuniverse}.
 \if\papermode0
 { \color{violet}
 
  \emph{More Detail:} To see this, for each $i < m$, enumerate the $E_i$-classes of $A$ (say $a$ belongs to the $f_i(a)$th $E_i$-class) and, for each $(E_0 \cap \dots \cap E_{m-1})$-class, enumerate the elements of that class (say $a$ is the $f_m(a)$th element of its $(E_0 \cap \dots \cap E_{m-1})$-class).  Let $n$ be the maximum of $f_i(a)+1$ over all $i \le m$ and $a \in A$.  Then, the map $f : A \rightarrow n^{m+1}$ given by $f(a) = (f_0(a), \dots, f_m(a))$ is such an embedding. 
  
 }
 \fi
 Thus, we may assume $A$ is this $L_m$-structure on $n^{m+1}$.  Let $f_A : A \rightarrow \Mon^m$ be given by $f_A(\oa) = (g(a_0), \dots, g(a_m))$ for each $\oa \in n^{m+1}$.
  
 For each $i < m$, let
 \[
  I(E_i)(y_{0,0}, \dots, y_{0,m}, y_{1,0}, \dots, y_{1,m}) = \left[ y_{0,i} = y_{1,i} \right].
 \]
 It is easy to check that $(I, f_A)_{A \in \EE^{\sstar m}}$ is an $\EE^{\sstar m}$-configuration into $\oy = \oy$.  Thus, $\Rk_{\EE}(m) \ge m$.
\end{proof}

We say that a theory $T$ is \emph{dp-minimal} if $\dpRk(y = y) = 1$ for some (any) single variable $y$ in $T$. Combining the previous two results, we conclude that $\EE$-rank is precisely equal to the dimension of the target type in dp-minimal theories.

\begin{cor}\label{Cor_EEDpRank}
 Let $T$ be a dp-minimal theory with infinite models.  Then, $\Rk_\EE(m) = m$.
\end{cor}

\begin{proof}
 Let $\oy$ be an $m$-tuple of variables from $T$.  By Proposition \ref{Prop_EELowerBound},
 \[
  \Rk_{\EE}(\oy = \oy) \ge | \oy |.
 \]
 Since dp-rank is subadditive \cite{kou}, $\dpRk(\oy = \oy) \le |\oy|$.  So, by Proposition \ref{Prop_EEUpperBound},
 \[
  \Rk_{\EE}(\oy = \oy) \le \dpRk(\oy = \oy) \le |\oy|.
 \]
 Thus, $\Rk_\EE(\oy = \oy) = \dpRk(\oy = \oy) = |\oy|$.
\end{proof}

\begin{ques}
 If $T$ is dp-minimal and $\pi$ is a partial type in $T$, then does $\Rk_{\EE}(\pi) = \dpRk(\pi)$?  More generally, under what conditions does $\Rk_{\EE}(\pi) = \dpRk(\pi)$?
\end{ques}

Although this question is still open, we have examples where $\EE$-rank and dp-rank differ, even in an NIP theory.

\begin{expl}\label{Expl_EENotDp}
 Fix $k \ge 2$ and let $T$ be the theory of the \fraisse limit of $\LO^{\sstar k}$.  We claim that, in the theory $T$,
 \[
  \left\lfloor \frac{k}{2} \right\rfloor \le \Rk_\EE(1) < k.
 \]
 (On the other hand, $\dpRk(y = y) = k$, so these ranks disagree.)
\end{expl}

\begin{proof}
 Let $m = \lfloor k/2 \rfloor$ and fix $A \in \EE^{\sstar m}$.  As in the proof of Proposition \ref{Prop_EELowerBound}, there exists $n < \omega$ such that $A$ embeds into $X = n^{m+1}$ with $L_m$-structure given in Lemma \ref{Lem_EEmuniverse}.  For each $i < m$, we define two linear orders $<_{2i}$ and $<_{2i+1}$ on $X$ as follows: for all $\oa, \ob \in X$, let
 \begin{itemize}
  \item $\oa <_{2i} \ob$ if $a_i < b_i$ or $a_i = b_i$ and $a_{j_0} < b_{j_0}$ where $j_0 = \min \{ j < n : a_j \neq b_j \}$, and
  \item $\oa <_{2i+1} \ob$ if $a_i > b_i$ or $a_i = b_i$ and $a_{j_0} < b_{j_0}$ where $j_0 = \min \{ j < n : a_j \neq b_j \}$.
 \end{itemize}
 It is clear from definition that, for all $i < m$, for all $\oa, \ob \in X$,
 \[
  E_i(\oa, \ob) \mathiff (\oa <_{2i} \ob \leftrightarrow \oa <_{2i+1} \ob).
 \]
 If $k = 2m + 1$, then define $<_{2m}$ arbitrarily.  Since $(X, <_i)_{i < k}$ is an element of $\LO^{\sstar k}$, we get an embedding of $X$ into $\Mon$, the monster model of $T$.  Composing this function with the one sending $A$ to $X$, we get a function $f_A : A \rightarrow \Mon$ such that, for all $a, b \in A$ and $i < m$,
 \[
  A \models E_i(a, b) \mathiff (f_A(a) <_{2i} f_A(b) \leftrightarrow f_A(a) <_{2i+1} f_A(b)).
 \]
 For each $i < m$, let
 \[
  I(E_i)(y_0, y_1) := \left[ y_0 <_{2i} y_1 \leftrightarrow y_0 <_{2i+1} y_1 \right]
 \]
 Then, $(I, f_A)_{A \in \EE^{\sstar m}}$ is an $\EE^{\sstar m}$-configuration into $\Mon$.  Thus, $\Rk_{\EE}(1) \ge m$.
 
 On the other hand, suppose that $\Gamma$ is the \fraisse limit of $\EE^{\sstar k}$, $\Mon$ is a monster model of $T$, and there exists an $\EE^{\sstar k}$-configuration over a finite $C$.  Since $\EE^{\sstar k}$ is age indivisible, there exists a function $f : \Gamma \rightarrow \Mon$ satisfying the conclusion of Proposition \ref{Prop_WeakGoodEmbedding}.  Fix $a \in \Gamma$ and, for each $s \in {}^k 2$, choose $b_s \in \Gamma$ such that, for all $i < k$, $E_i(a, b_s)$ if and only if $s(i) = 1$.  Consider the $2$-types in $T$:
 \[
  p_{s,0} = \tp_L(f(a), f(b_s)) \text{ and } p_{s,1} = \tp_L(f(b_s), f(a)).
 \]
 Since $L$ has only binary relations and each $f(a)$ and $f(b_s)$ have the same $L$-type over $C$, these types determine the $L$-types over $C$.  For any $s$ not the identically $1$ function, $f(a) \neq f(b_s)$.  Otherwise, suppose that $f(a) = f(b_s)$ where $s \in {}^k 2$ with $s(i) = 0$.  Then, we get that
 \[
  \tp_L(f(a), f(b_1) / C) = \tp_L(f(b_s), f(b_1) / C)
 \]
 (where $1$ is the identically $1$ function).  Since $\Gamma \models E_i(a, b_1)$, this implies that $\Gamma \models E_i(b_s, b_1)$, hence $\Gamma \models E_i(a, b_s)$, which is a contradiction.  Similarly, for $s$ and $t$ not identically $1$, if $(s,i) \neq (t,j)$, then $p_{s,i} \neq p_{t,j}$.  Therefore, we have at least $2 \cdot (2^k - 1)$ many non-equality $2$-types in $T$.  On the other hand, there are $2^k$ many non-equality $2$-types in $T$ (one type for each possible assignment of $x <_i y$ or $x >_i y$ for all $i < k$).  Thus, $2^{k+1} - 2 \le 2^k$, a contradiction.  Therefore, there is no $\EE^{\sstar k}$-configuration into $y = y$.  Thus, $\Rk_{\EE}(y = y) < k$.
\end{proof}

\if\papermode0
{ \color{violet}

\textit{Bonus Example: $\EE$-rank is discontinuous.}

\begin{expl}
 Let $T$ be the theory of infinitely many independent equivalence relations in the language $L$ consisting of binary relation symbols $E_i$ for $i < \omega$.  Fix $c \in \Mon$ and let $p(y)$ be the type generated by $\{ E_i(y, c) : i < \omega \}$.  Then, $\Rk_\EE(p) = 1$ but $\Rk_\EE(\psi) = \infty$ for all $\psi \in p$.  This shows that, generally, we cannot have
 \[
  \Rk_\KK(p) = \min \{ \Rk_\KK(\psi) : \psi \in p \}
 \]
 (even when $p$ is over a small subset of $\Mon$).
\end{expl}

\begin{proof}
 Clearly $\Rk_\EE(p) \ge 1$.  Towards a contradiction, suppose $\Rk_\EE(p) \ge 2$.  Let $\Gamma$ be the \fraisse limit of $\EE^{\sstar 2}$, let $f : \Gamma \rightarrow p(\Mon)$, and let $\varphi_t(y_0, y_1)$ for $t < 2$ be $L(\Mon)$-formulas such that, for all $a, b \in \Gamma$ and $t < 2$,
 \[
  \Gamma \models E_t(a, b) \mathiff \Mon \models \varphi_t(f(a), f(b)).
 \]
 (i.e., $f$ is an $\EE^{\sstar 2}$-configuration into $p$.)  Let $C \subseteq \Mon$ be the finite set of parameters used in $\varphi_0$ and $\varphi_1$.  For all $a, b \in \Gamma$ and $t < 2$, if $f(a) = f(b)$, then, since $\Gamma \models E_t(a, a)$, hence $\Mon \models \varphi_t(f(a), f(a))$, we have that $\Mon \models \varphi_t(f(a), f(b))$, hence $\Gamma \models E_t(a, b)$.  Fix distinct $a_0, a_1, a_2 \in \Gamma$ such that
 \[
  \Gamma \models E_0(a_0, a_1) \wedge \neg E_1(a_0, a_1) \wedge \neg E_0(a_0, a_2).
 \]
 Thus, $f(a_0) \neq f(a_1)$ and $f(a_0) \neq f(a_2)$.  Then, since $p(\Mon)$ has no structure
 \[
  \tp_L(f(a_0), f(a_1) / C) = \tp_L(f(a_0), f(a_2) / C).
 \]
 Thus, since $\Gamma \models E_0(a_0, a_1)$, we have that $\Mon \models \varphi_0(f(a_0), f(a_1))$, hence $\Mon \models \varphi_0(f(a_0), f(a_2))$, hence $\Gamma \models E_0(a_0, a_2)$.  This is a contradiction.  Therefore, $\Rk_\EE(p) = 1$.
 
 Fix $m < \omega$ and fix $\psi(y) \in p(y)$.  Suppose that $I \subseteq \omega$ is infinite such that $\psi$ has no mention of $E_i$ for all $i \in I$.  Therefore, in the language with signature $\{ E_i : i \in I \}$, $\psi(\Mon)$ has infinitely many independent equivalence relations.  Choose some $\sigma : m \rightarrow I$ injective.  Define the map $I$ by setting, for all $i < m$, $I(E_i) = E_{\sigma(i)}$.  It is clear that $I$ satisfies the conditions of Lemma \ref{Lem_FiniteConfig}.  Therefore, there exists an $\EE^{\sstar m}$-configuration into $\psi$.  Since $m$ was arbitrary, $\Rk_\EE(\psi) = \infty$.
\end{proof}

}
\fi


\subsection{Graph Rank}\label{Subsect_GR}

For this subsection, we consider the \algtriv \fraisse class $\GG$.

\begin{expl}[NIP]\label{Expl_GGNIP}
 If $T$ is a theory with NIP, then, for all types $\pi$ in $T$, $\Rk_\GG(\pi) = 0$.  Thus, $\GG$-rank is trivially additive.
\end{expl}

\begin{proof}
 This follows from Theorem \ref{Thm_CCExamples} (2) and Corollary \ref{Cor_StarSim}.
\end{proof}

Similar to $\LO$-rank, the $\GG$-rank can jump from $0$ to $\infty$ in a theory with the independence property.

\begin{expl}
 Let $L$ be the language consisting of binary relation symbols $R_i$ for $i < \omega$ and let $T$ be the model companion of the theory of $L$-structures for which $R_i$ is a triangle-free graph for all $i < \omega$.  Then,
 \[
  \Rk_\GG(1) = 0 \text{ and } \Rk_\GG(2) = \infty.
 \]
\end{expl}

\begin{proof}
 Let $\Mon$ be a monster model for $T$.  Towards a contradiction, suppose there exists $(I, f_A)_{A \in \GG}$ a $\GG$-configuration into $\Mon$.  Fix $n < \omega$ such that $I(E)$ mentions only $R_i$ for $i < n$.  By quanitifier elimination, there exists $S \subseteq {}^n 2$ such that
 \[
  I(E)(y_0, y_1) = \bigvee_{s \in S} \bigwedge_{i < n} R_i(y_0, y_1)^{s(i)}.
 \]
 By swapping $E$ with $\neg E$ if necessary, we may assume the constant zero function is not in $S$.  If we consider a finite complete graph $A$, then $f_A(A)$ can be viewed as a complete graph with edge colors in $S$.  By Ramsey's Theorem, for sufficiently large $A$, there exists a triangle of a fixed color $s_0 \in S$.  By assumption, there exists $i_0 < n$ such that $s_0(i_0) = 1$, so this is an $R_{i_0}$-triangle.  This is a contradiction.

 Fix an arbitrary $m < \omega$ and define, for each $i < m$, $L$-formulas as follows:
 \[
  I(E_i)(y_{0,0}, y_{0,1}, y_{1,0}, y_{1,1}) = R_i(y_{0,0}, y_{1,1}) \wedge R_i(y_{0,1}, y_{1,0}).
 \]
 For any $A \in \GG^{\sstar m}$, there exists a function $f_A : A \rightarrow \Mon^2$ such that, for all $a, b \in A$ and $i < m$,
 \[
  A \models E_i(a,b) \mathiff \Mon \models I(E_i)(f_A(a), f_A(b)).
 \]
 That is, $(I, f_A)_{A \in \GG^{\sstar m}}$ is a $\GG^{\sstar m}$-configuration into $\Mon^2$.
\end{proof}


\subsection{Into the Random Graph}\label{Subsect_RandGraph}

In this subsection, we study the specific case where the target theory is the theory of the random graph.  It turns out that $\KK$-rank, for various examples of $\KK$, acts in an interesting manner in this theory.

For this subsection, let $T$ be the theory of the random graph in the language $L$ with a single binary relation $R$ and let $\Mon$ be a monster model for $T$.  Let $\KK$ be \analgtriv \fraisse class in a language $L_0$ with a single binary relation symbol, $E$.  For any $m \ge 1$, let $L_m$ be the language of $\KK^{\sstar m}$, which consists of $m$ binary relation symbols; call them $E_i$ for $i < m$.  Let $\Gamma$ be the \fraisse limit of $\KK^{\sstar m}$.

Fix $n \ge 1$ and, for each $t < 2$, consider the set $X_t = n \times \{ t \}$.  Let $\cG^n$ be the set of bipartite graphs with parts $X_0$ and $X_1$.  Then, $| \cG^n | = 2^{n^2}$.  Say that $G = (X_0 \cup X_1, F) \in \cG^n$ is \emph{symmetric} if, whenever $\{ (i,0),(j,1) \} \in F$, $\{ (j,0), (i,1) \} \in F$.  Let $\cGs^n$ be the set of symmetric bipartite graphs with parts $X_0$ and $X_1$ and let $\cGns^n$ be the set of non-symmetric bipartite graphs with parts $X_0$ and $X_1$.  Notice that $|\cGs^n| = 2^{\binom{n+1}{2}}$.  To see this, observe that, for each $i \le j < n$, we can choose whether or not to put $\{ (i,0),(j,1) \}, \{ (j,0), (i,1) \} \in F$.  This gives us $\binom{n}{2} + n = \binom{n+1}{2}$ choices.  Thus,
\[
 | \cGns^n | = 2^{n^2} - 2^{\binom{n+1}{2}} = 2^{\binom{n+1}{2}} \left( 2^{\binom{n}{2}} - 1 \right).
\]
For each $G \in \cG^n$, let $G^*$ be the graph where we ``swap parts'' (i.e., $\{ (i,0), (j,1) \}$ is an edge of $G$ if and only if $\{ (j,0), (i,1) \}$ is an edge of $G^*$).  Clearly $(G^*)^* = G$ and, for all $G \in \cG^n$, $G \in \cGs^n$ if and only if $G^* = G$.

Let $\cS_2$ denote the set of all quantifier-free $2$-$L_m$-types over $\emptyset$, $p(x_0, x_1)$, such that $(a_0, a_1) \models p$ for some distinct $a_0, a_1 \in \Gamma$.  For $p \in \cS_2$, let $p^*$ be the type in $\cS_2$ such that, for all $i < m$,
\[
 p^*(x_0, x_1) \vdash E_i(x_0, x_1) \mathiff p(x_0, x_1) \vdash E_i(x_1, x_0).
\]

The next three propositions give conditions on $\KK$ that guarantee that $\Rk_\KK(n) = n^2 - 1$ for $n \ge 2$.  These conditions are met by $\LO$, $\GG$, and $\TT$.

\begin{prop}\label{Prop_KKquad1}
 Fix $n \ge 1$.  Assume that $\KK$ is either reflexive or irreflexive.  Assume also that $\KK$ is either symmetric or trichotomous.  Then,
 \[
  \Rk_\KK(n) \ge n^2 - 1.
 \]
 Moreover, this is witnessed by a parameter-free configuration.
\end{prop}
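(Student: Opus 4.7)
By Lemma~\ref{Lem_FiniteConfig}, it suffices to exhibit parameter-free $L$-formulas $\varphi_k(\oy, \oy')$ for $k < n^2 - 1$ together with, for every finite $A \in \KK^{\sstar(n^2-1)}$, a map $f_A : A \to \Mon^n$ realizing the configuration. The plan is to let $f_A(a) = (c_0^a, \ldots, c_{n-1}^a)$ have pairwise distinct coordinates and to encode the tuple $(E_k(a, b))_{k < n^2-1}$ for distinct $a, b \in A$ in the bipartite graph $G^{a,b} \in \cG^n$ whose edges record the truth values of $R(c_i^a, c_j^b)$. Because $T$ is the theory of the random graph, any compatible prescription of these $G^{a,b}$ on a finite collection of distinct pairs is realizable in $\Mon$.

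In the trichotomous case, I would restrict to $G^{a,b} \in \cGns^n$, on which the involution $G \leftrightarrow G^*$ is free and has $|\cGns^n|/2$ orbits. The hypothesis $n \ge 2$ gives $\binom{n+1}{2} \le n^2 - 1$, hence $|\cGns^n|/2 \ge 2^{n^2-2}$, which is exactly the number of $\{b, \neg b\}$-pairs in $\{0,1\}^{n^2-1}$; this allows the construction of a surjection $\Phi : \cGns^n \to \{0,1\}^{n^2-1}$ with $\Phi(G^*) = \neg \Phi(G)$, built by distributing orbits surjectively over the negation-pairs. Taking $\varphi_k$ to be the $k$th coordinate of $\Phi$ (extended to $\cGs^n$ by sending the empty graph to $\vec{0}$ and other elements arbitrarily) realizes each $\varphi_k$ as a Boolean combination of the atomic formulas $R(y_i, y'_j)$, hence a parameter-free $L$-formula. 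For each distinct pair $\{a,b\} \subseteq A$ one picks $G^{a,b} \in \Phi^{-1}((E_k(a,b))_k)$; trichotomy of the $E_k$'s ensures consistency between the $(a,b)$ and $(b,a)$ sides via $\Phi((G^{a,b})^*) = \neg \Phi(G^{a,b})$.

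The symmetric case is analogous, replacing $\Phi$ by a surjection $\Psi : \cG^n \to \{0,1\}^{n^2-1}$ invariant under $G \leftrightarrow G^*$; its orbit count $(2^{n^2} + 2^{\binom{n+1}{2}})/2 \ge 2^{n^2-1}$ always suffices. The reflexive versus irreflexive dichotomy is handled separately: in the irreflexive case I additionally arrange $R(c_i^a, c_j^a) = 0$ for $i < j$ so that the self-pair $(a,a)$ induces the empty bipartite graph, which has been assigned $\vec{0}$; in the reflexive case I replace each $\varphi_k$ by $\left( \bigwedge_{i<n} y_i = y'_i \right) \vee \varphi_k$, forcing the formula to hold at $(f_A(a), f_A(a))$ without changing its value on distinct pairs (whose coordinates are distinct). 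The main obstacle is the combinatorial existence of $\Phi$ (and $\Psi$) with both surjectivity and the correct equivariance; this reduces to the orbit-count inequalities above, which pin down the exact threshold $n^2 - 1$.
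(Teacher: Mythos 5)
Your proposal is correct and follows essentially the same route as the paper: encoding the quantifier-free $2$-type of a pair in the bipartite $R$-graph between the two $n$-tuples of coordinates, using the counts $|\cGs^n| = 2^{\binom{n+1}{2}}$ and $|\cGns^n| = 2^{n^2} - 2^{\binom{n+1}{2}}$ together with the $*$-involution to show there are enough (orbits of) graphs to separate all $2^{n^2-1}$ type-patterns, and realizing the prescribed graphs via genericity of the random graph. Your surjection $\Phi$ (resp.\ $\Psi$) from graphs onto patterns is just the inverse packaging of the paper's injection $h$ from types into orbits $\{G, G^*\}$; your explicit treatment of the diagonal in the reflexive case is a small additional care the paper leaves implicit.
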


\begin{proof}
 We may assume $n \ge 2$, since the statement is trivial when $n = 1$.
 
 Let $m = n^2 - 1$ and let $\oy$ be an $n$-tuple of variables from $T$.  Consider the function $g : \cS_2 \rightarrow {}^m 2$ where, for all $p \in \cS_2$ and $i < m$,
 \[
  g(p)(i) = 1 \mathiff p(x_0, x_1) \vdash E_i(x_0, x_1).
 \]
 Since $\KK$ is symmetric or trichotomous and $\KK$ is reflexive or irreflexive, $g$ is injective.  Thus, $|\cS_2| \le 2^m$.
 \if\papermode0
 { \color{violet}
 
  \emph{More Detail:} Fix $p \in \cS_2$ and $i < m$.  If $\KK$ is symmetric, then $g(p)(i) = 1$ if and only if $p \vdash E_i(x_0, x_1)$ if and only if $p \vdash E_i(x_1, x_0)$.  If $\KK$ is trichotomous, then $g(p)(i) = 1$ if and only if $p \vdash E_i(x_0, x_1)$ if and only if $p \vdash \neg E_i(x_1, x_0)$.  If $\KK$ is reflexive, then $p \vdash E_i(x_0, x_0) \wedge E_i(x_1, x_1)$.  If $\KK$ is irreflexive, then $p \vdash \neg E_i(x_0, x_0) \wedge \neg E_i(x_1, x_1)$.  In any case, we see that $g$ is injective.  Thus,
  \[
   \left| \cS_2 \right| \le 2^m.
  \]
 }
 \fi
 
 Choose any injective function $h : \cS_2 \rightarrow \Pow(\cG^n)$ with the following conditions:
 \begin{enumerate}
  \item If $\KK$ is symmetric, then, for all $p \in \cS_2$, $h(p) = \{ G, G^* \}$ for some $G \in \cG^n$.
  \item If $\KK$ is trichotomous, then, for all $p \in \cS_2$, $h(p) = \{ G \}$ for some $G \in \cGns^n$ and $h(p^*) = (h(p))^*$.
 \end{enumerate}
 To see that such an $h$ exists, we have to consider two cases:
 
 \begin{caseone}
  $\KK$ is symmetric.
 \end{caseone}
 
 In this case, the number of allowed outputs for $h$ is
 \begin{align*}
  | \cGs^n | + \frac{1}{2} | \cGns^n | = & \ 2^{\binom{n+1}{2}} + \frac{1}{2} \left( 2^{n^2} - 2^{\binom{n+1}{2}} \right) = \\ & \ 2^{n^2-1} + 2^{\binom{n+1}{2} - 1} \ge 2^{n^2 - 1}.
 \end{align*}
 
 \begin{casetwo}
  $\KK$ is trichotomous.
 \end{casetwo}
 
 In this case, the number of allowed outputs for $h$ is
 \[
  | \cGns^n | = 2^{\binom{n+1}{2}} \left( 2^{\binom{n}{2}} - 1 \right) \ge 2^{n^2 - 1}.
 \]
 
 In either case, the number of allowed outputs for $h$ is at least
 \[
  2^{n^2-1} = 2^m \ge | \cS_2 |.
 \]
 Therefore, such a function $h$ exists.
  
 For each $G = (X_0 \cup X_1, F) \in \cG^n$, let
 \[
  \varphi_G(y_{0,0}, \dots, y_{0,n-1}, y_{1,0}, \dots, y_{1,n-1}) = \bigwedge_{ i, j < n } R(y_{0,i}, y_{1,j})^{\condiff \{ (i,0), (j,1) \} \in F}.
 \] 
 Finally, for each $i < m$, let
 \[
  I(E_i)(\oy_0, \oy_1) = \bigvee_{p \in \cS_2, p(x_0, x_1) \vdash E_i(x_0, x_1)} \left( \bigvee_{G \in h(p)} \varphi_G(\oy_0, \oy_1) \right).
 \]
 For any $A \in \KK^{\sstar m}$, consider the set $n \times A$ and endow it with an $R$-graph structure as follows: For all distinct $a, b \in A$, choose some $G = (X_0 \cup X_1, F) \in h(\qftp_{L_0}(a,b))$.  For all $i, j < n$, define
 \[
  R((i, a), (j, b)) \mathiff \{ (i,0), (j,1) \} \in F
 \]
 and add no other $R$-edges among $n \times \{ a, b \}$.  Since $\Mon$ is a model of the random graph, this $R$-structure on $n \times A$ embeds into $\Mon$, say via $g : (n \times A) \rightarrow \Mon$.  Define $f_A : A \rightarrow \Mon^n$ by setting $f_A(a) = ( g(i,a) )_{i < n}$.  Then, it is clear that, for all $a, b \in A$ and $i < m$,
 \[
  A \models E_i(a,b) \mathiff \models I(E_i)(f_A(a), f_A(b)).
 \]
 Thus, $(I, f_A)_{A \in \KK^{\sstar m}}$ is a $\KK^{\sstar m}$-configuration into $\oy = \oy$.  Moreover, notice that this gives us a parameter-free configuration.
\end{proof}

\begin{prop}\label{Prop_KKquad2}
 Fix $n \ge 1$.  Assume that $\KK$ is \selfsim\ and \generic.  Then,
 \[
  \Rk_\KK(n) \le n^2.
 \]
\end{prop}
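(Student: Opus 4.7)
The plan is to argue by contradiction. Set $m = n^2 + 1$ and assume $\Rk_\KK(n) \ge m$; then there is a $\KK^{\sstar m}$-configuration $f : \Gamma \to \Mon^n$ witnessed by finitely many $L(\Mon)$-formulas $\varphi_0, \ldots, \varphi_{m-1}$, so we may take $f$ over a finite $C \subseteq \Mon$. Since $\KK$ is \selfsim, so is $\KK^{\sstar m}$ by Proposition \ref{Prop_SelfSimStar}, and Proposition \ref{Prop_GoodEmbedding} applies. I would use it to replace $f$ by one satisfying conditions (1)--(3) with fixed-coordinate set $J \subseteq |\oy|$; set $J' = |\oy| \setminus J$ and $n' = |J'| \le n$. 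After enlarging $C$ to contain the constants $f(a)_j$ for $j \in J$, I would iterate condition (3) by a back-and-forth enumeration (validated by Proposition \ref{Prop_WeakHom}) to pass to a substructure $\Gamma' \subseteq \Gamma$ with $\Gamma' \cong \Gamma$ on which, for all distinct $a, b \in \Gamma'$ and all $i, j \in J'$, the values $f(a)_i$ and $f(b)_j$ are distinct.

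The next step counts $2$-types in two ways. For distinct $a, b \in \Gamma'$, define
\[
 \mu(a, b) = \bigl( R(f(a)_i, f(b)_j) \bigr)_{i, j \in J'} \in {}^{J' \times J'} 2.
\]
Quantifier elimination in the theory of the random graph, combined with symmetry of $R$, implies that $\tp_L(f(a), f(b) / C)$ is determined by $\mu(a, b)$ together with the constant data $\tp_L(f(a) / C) = \tp_L(f(b) / C)$ (from condition (1)) and the general-position equality pattern. Since each $\varphi_i \in L(C)$, both $E_i(a, b) = \varphi_i(f(a), f(b))$ and $E_i(b, a) = \varphi_i(f(b), f(a))$ are determined by $\mu(a, b)$; the second direction uses that symmetry of $R$ lets us recover the pattern between $f(b)$ and $f(a)$ from $\mu(a, b)$. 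The qf $1$-types in $\Gamma'$ are also constant because $\varphi_i(f(a), f(a))$ depends only on $\tp_L(f(a)/C)$. Hence the qf $2$-type $\qftp_{L_m}(a, b)$ factors through $\mu(a, b)$, and we conclude $|\cS_2| \le 2^{(n')^2} \le 2^{n^2}$.

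On the other hand, Proposition \ref{Prop_GenericStar} gives that $\KK^{\sstar m}$ is \generic, so for each assignment $g : \{E_0, \ldots, E_{m-1}\} \to 2$ some $p \in \cS_2$ satisfies $p \vdash E_i(x_0, x_1)^{g(E_i)}$ for all $i < m$. These yield $2^m = 2^{n^2 + 1}$ distinct $2$-types, so $|\cS_2| \ge 2^{n^2 + 1}$, contradicting the previous bound.

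The principal obstacle is the preparation step: condition (3) of Proposition \ref{Prop_GoodEmbedding} supplies only one fresh element at a time, and one must verify that the back-and-forth using Proposition \ref{Prop_WeakHom} produces an isomorphic copy $\Gamma' \cong \Gamma$ on which the cross-coordinate general-position condition holds uniformly. A related subtlety is the careful splitting of $\tp_L(f(a), f(b)/C)$ into \emph{cross pattern} plus \emph{fixed data}, which relies on both the symmetry of $R$ in the random graph and the constancy of singleton $L$-types over $C$ guaranteed by Proposition \ref{Prop_GoodEmbedding}.
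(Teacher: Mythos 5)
Your proposal is correct and follows essentially the same route as the paper: apply Proposition \ref{Prop_GoodEmbedding} (via Proposition \ref{Prop_SelfSimStar}), use binarity of $L$ and symmetry of $R$ to show the quantifier-free $2$-$L_m$-type of a pair is determined by the cross $R$-pattern between the two $n$-tuples, and play the resulting bound $|\cS_2| \le 2^{n^2}$ against the lower bound $|\cS_2| \ge 2^m$ from Proposition \ref{Prop_GenericStar}. The only difference is that the paper avoids your back-and-forth uniformization entirely: it needs only one witness pair $(a_p, b_p)$ per type $p \in \cS_2$, which condition (3) of Proposition \ref{Prop_GoodEmbedding} supplies directly, so the injective map $\cS_2 \to \cG^n$ is defined pairwise and the "principal obstacle" you flag never arises.
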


\begin{proof}
 Fix $m \ge 1$ and let $\oy$ be an $n$-tuple of variables from $T$.  Suppose that there exists a $\KK^{\sstar m}$-configuration into $\oy = \oy$.  By Proposition \ref{Prop_SelfSimStar}, since $\KK$ is \selfsim, $\KK^{\sstar m}$ is \selfsim.  Since $\KK^{\sstar m}$ is \selfsim, we may assume that there exist $I$, $f$, and $J$ satisfying the conclusion of Proposition \ref{Prop_GoodEmbedding}.  That is, $I : \sig(L_0) \rightarrow L(\Mon)$, $f : \Gamma \rightarrow \Mon^n$, and $J \subseteq |\oy|$ are such that
 \begin{enumerate}
  \item for all $i < m$, for all $a,b \in \Gamma$, $\Gamma \models E_i(a,b)$ if and only if $\Mon \models I(E_i)(f(a), f(b))$;
  \item for all $a, b \in \Gamma$, $\tp_L(f(a) / C) = \tp_L(f(b) / C)$;
  \item for all $j \in J$ and all $a, b \in \Gamma$, $f(a)_j = f(b)_j$; and
  \item for all $p \in \cS_2$, there exist $a_p, b_p \in \Gamma$ such that $(a_p, b_p) \models p$ and, for all $i, j \in |\oy| \setminus J$, $f(a_p)_i \neq f(b_p)_j$.
 \end{enumerate}
 Since $L$ is a binary language, condition (2) tells us that the type $\tp_L(f(a_p), f(b_p))$ determines the type $\tp_L(f(a_p), f(b_p) / C)$.
 
 We get a function $h : \cS_2 \rightarrow \cG^n$ as follows:  Fix $p \in \cS_2$.  For all $i, j < n$, we put $\{ (i,0), (j,1) \}$ in the edge set of $h(p)$ if and only if $R(f(a_p)_i, f(b_p)_j)$.  If we have $p, p' \in \cS_2$ distinct, then $\tp_L(f(a_p), f(b_p))$ and $\tp_L(f(a_{p'}), f(b_{p'}))$ disagree on some formula of the form $R(y_{0,i}, y_{1,j})$.  Therefore, $h(p) \neq h(p')$.  Hence, $h$ is injective.
 
 Since $\KK$ is \generic, by Proposition \ref{Prop_GenericStar}, $\KK^{\sstar m}$ is \generic.  Thus, $| \cS_2 | \ge 2^m$.  On the other hand, $| \cG^n | = 2^{n^2}$.  Therefore, $2^m \le 2^{n^2}$.  Thus, $m \le n^2$.
\end{proof}

\begin{prop}\label{Prop_KKquad3}
 Assume that $\KK$ is either reflexive or irreflexive.  Assume that $\KK$ is \selfsim\ and \generic.
 \begin{enumerate}
  \item If $\KK$ is trichotomous, then $\Rk_\KK(n) < n^2$ if $n \ge 1$.
  \item If $\KK$ is symmetric, then $\Rk_\KK(n) < n^2$ if $n \ge 2$.
 \end{enumerate}
\end{prop}

\begin{proof}
 Fix $m \ge 1$ and $n \ge 1$.  Let $h : \cS_2 \rightarrow \cG^n$ be the injective function from the proof of Proposition \ref{Prop_KKquad2} and consider the map $p \mapsto (a_p, b_p)$ from that proof.  We will show that $h$ is not surjective.
 
 \begin{caseone}
  $\KK$ is trichotomous.
 \end{caseone}
 
 If $p \in \cS_2$, then
 \[
  \tp_L(f(a_p), f(b_p)) \neq \tp_L(f(b_p), f(a_p)).
 \]
 Otherwise, $\tp_L(f(a_p), f(b_p) / C) = \tp_L(f(b_p), f(a_p) / C)$, hence $E_i(a_p,b_p)$ if and only if $E_i(b_p,a_p)$ for all $i < m$, which is a contradiction.  Thus, $h(p) \in \cGns^n \subsetneq \cG^n$.
 
 \begin{casetwo}
  $\KK$ is symmetric and $n \ge 2$.
 \end{casetwo}
 
 If $p, p' \in \cS_2$ are distinct, then
 \[
  \tp_L(f(a_p), f(b_p)) \neq \tp_L(f(b_{p'}), f(a_{p'})).
 \]
 Otherwise, $\tp_L(f(a_p), f(b_p) / C) = \tp_L(f(b_{p'}), f(a_{p'}) / C)$, hence $E_i(a_p,b_p)$ if and only if $E_i(b_{p'},a_{p'})$ if and only if $E_i(a_{p'},b_{p'})$ for all $i < m$, which is a contradiction.  Since $n \ge 2$, $\cGns^n \neq \emptyset$.  If $p, p' \in \cS_2$ are distinct and $h(p) \in \cGns^n$, then $h(p) \neq (h(p'))^*$.
 
 In either case, we see that $h$ is not surjective.  Therefore,
 \[
  2^m \le | \cS_2 | < | \cG^n | = 2^{n^2}.
 \]
 So $m < n^2$.
\end{proof}

We apply Propositions \ref{Prop_KKquad1}, \ref{Prop_KKquad2}, and \ref{Prop_KKquad3} to $\LO$, $\GG$, and $\TT$.

\begin{expl}[$\KK = \LO$]\label{Expl_LORandGr} 
 For all $n \ge 1$,
 \[
  \Rk_\LO(n) = n^2 - 1.
 \]
\end{expl}
 
\begin{proof}
 Since $\LO$ is irreflexive and trichotomous, Proposition \ref{Prop_KKquad1} gives us that $\Rk_\LO(n) \ge n^2 - 1$.  Moreover, $\LO$ is \selfsim\ and \generic, so Proposition \ref{Prop_KKquad3} gives us that $\Rk_\LO(n) < n^2$.
\end{proof}

\begin{expl}[$\KK = \TT$]\label{Expl_FFandGr}
 For all $n \ge 1$,
 \[
  \Rk_\TT(n) = n^2 - 1.
 \]
\end{expl}

\begin{proof}
 Similar to Example \ref{Expl_LORandGr}.
\end{proof}

In this paper, although the focus is not on $\TT$-rank, we do get this result ``for free.''  In future work, we may examine $\TT$-rank in other contexts.

\begin{expl}[$\KK = \GG$]\label{Expl_GRandGr}
 For all $n \ge 1$,
 \[
  \Rk_\GG(n) = \begin{cases} 1 & \text{ if } n = 1, \\ n^2 - 1 & \text{ if } n \ge 2 \end{cases}.
 \]
\end{expl}

\begin{proof}
 Since $\GG$ is irreflexive and symmetric, Proposition \ref{Prop_KKquad1} gives us that $\Rk_\GG(n) \ge n^2 - 1$.  Moreover, $\GG$ is \selfsim\ and \generic, so Proposition \ref{Prop_KKquad3} gives us that $\Rk_\GG(n) < n^2$ for $n \ge 2$.  To see that $\Rk_\GG(1) = 1$, use Proposition \ref{Prop_KKquad2} and Lemma \ref{Lem_IdentityConfig}.
\end{proof}

We now turn our attention to when $\KK = \EE$.  This will take more work because $\EE$ is not \selfsim.

For $t \in {}^m 2$ and $\oa, \ob \in \omega^m$, we say that $\oa \le_t \ob$ if, for all $i < m$,
\begin{itemize}
 \item $a_i < b_i$ if $t(i) = 1$ and
 \item $a_i = b_i$ if $t(i) = 0$.
\end{itemize}
Observe that, for any $\oa, \ob \in \omega^m$, there exists at most one $t \in {}^m 2$ such that $\oa \le_t \ob$.

For positive integers $m$ and $n$, consider the following property, which turns out to be implied by the existence of an $\EE^{\sstar m}$-configuration into $\Mon^n$:
\begin{quotation}
 $(\dagger)_{m,n}$: There exists a function $f : \omega^m \rightarrow \Mon^n$ such that, for all $t, t' \in {}^m 2$, for all $\oa, \ob, \oa', \ob' \in \omega^m$ with $\oa \le_t \ob$ and $\oa' \le_{t'} \ob'$,
 \[
  \tp_L(f(\oa), f(\ob)) = \tp_L(f(\oa'), f(\ob')) \mathiff t = t'.
 \]
\end{quotation}
In particular, when $t = t'$, $f(\oa)_\ell = f(\ob)_\ell$ if and only if $f(\oa')_\ell = f(\ob')_\ell$ for each $\ell < n$.  Moreover, by choosing the constantly zero function for $t$, we see that the function $\oa \mapsto \tp_L(f(\oa))$ is constant.  Restricting to $2^m \subseteq \omega^m$, we see that, for all $\oa, \ob \in 2^m$,
\begin{equation}\label{Eq_2mtypes}
 \tp_L(f(\ozero), f(\oa)) = \tp_L(f(\ozero), f(\ob)) \mathiff \oa = \ob.
\end{equation}

As advertised, we get the following lemma.

\begin{lem}\label{Lem_EEUniformize}
 If there exists an $\EE^{\sstar m}$-configuration into $\Mon^n$, then $(\dagger)_{m,n}$ holds.
\end{lem}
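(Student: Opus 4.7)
The plan is to extract the desired $f : \omega^m \rightarrow \Mon^n$ from a given $\EE^{\sstar m}$-configuration $f_0 : \Gamma \rightarrow \Mon^n$ by (i) realizing $\Gamma$ on $\omega^{m+1}$ as in Remark \ref{Rem_EEmuniverse}, (ii) refining $f_0$ via a product-Ramsey argument so that the $L$-type of an image pair depends only on the equivalence pattern of its preimage, and (iii) composing with the $L_m$-embedding $g : \omega^m \rightarrow \omega^{m+1}$ defined by $g(\oa) = (\oa, 0)$.

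For (i), fix $\varphi_i(\oy_0, \oy_1) \in L(C)$ for $i < m$ and a finite $C \subseteq \Mon$ witnessing the configuration, so that $\Gamma \models E_i(a,b)$ iff $\Mon \models \varphi_i(f_0(a), f_0(b))$. The map $g$ is an $L_m$-embedding because each $E_i$ with $i < m$ depends only on the first $m$ coordinates of $\omega^{m+1}$. For (ii), color each pair $(\oa, \ob) \in (\omega^{m+1})^2$ with $\oa \ne \ob$ by $\tp_L(f_0(\oa), f_0(\ob) / C)$; restricting to any finite set of $L(C)$-formulas gives a finite coloring. Iterating the product-Ramsey argument behind Lemma \ref{Lem_ColorBoxes}, now for pair colorings and applied separately to each of the $2^{m+1} - 1$ nontrivial equivalence-pattern classes $P \subseteq \{0, \ldots, m\}$, I would find infinite $Y_0, \ldots, Y_m \subseteq \omega$ such that on the box $\prod_{i \le m} Y_i$ the color of every pair depends only on its equivalence pattern. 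A diagonalization over increasing finite sets of $L(C)$-formulas together with compactness upgrades this to uniformity of the entire $L(C)$-type. After renaming, I would take each $Y_i = \omega$ and set $f := f_0 \circ g$.

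Verification of $(\dagger)_{m, n}$ is then direct. If $\oa \le_t \ob$ and $\oa' \le_t \ob'$ in $\omega^m$, then $(g(\oa), g(\ob))$ and $(g(\oa'), g(\ob'))$ share the equivalence pattern $\{ i < m : t(i) = 0 \} \cup \{ m \}$ in $\omega^{m+1}$, so the uniformity arranged above yields $\tp_L(f(\oa), f(\ob)) = \tp_L(f(\oa'), f(\ob'))$. Conversely, if these $L$-types agree, then so does the truth value of each $\varphi_i$ evaluated on the two image pairs; the defining property of $f_0$ converts this into $a_i = b_i \mathiff a'_i = b'_i$ for every $i < m$, whence $t = t'$.

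The main obstacle is step (ii). Lemma \ref{Lem_ColorBoxes} provides the singleton-coloring product-Ramsey result for $\omega^{m+1}$, but here I need its pair-coloring analogue, stratified by equivalence pattern. I expect this to follow by iterating the infinite Ramsey theorem coordinate by coordinate and processing each of the finitely many pattern classes in turn, intersecting the surviving infinite coordinate sets at every stage. The remainder of the argument---the embedding, the composition, and the two-direction verification---is routine bookkeeping once this Ramsey ingredient is in hand.
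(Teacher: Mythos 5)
Your overall strategy matches the paper's: realize $\Gamma$ on $\omega^{m+1}$ as in Remark \ref{Rem_EEmuniverse}, apply a product-Ramsey theorem for pair colorings so that the $L(C)$-type of an image pair depends only on the ``shape'' of the pair, pass to the slice $\oa \mapsto (\oa, 0)$, and finish by compactness. The combinatorial ingredient you identify as the main obstacle is exactly the paper's Lemma \ref{Lem_ColorBoxes2}, proved in the appendix by the coordinate-by-coordinate iteration of Ramsey's theorem that you sketch, so that part of your plan is sound.

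Two points in your verification need repair, and the first is a genuine gap. In the converse direction you assert that if $\tp_L(f(\oa), f(\ob)) = \tp_L(f(\oa'), f(\ob'))$ then the truth values of each $\varphi_i$ agree on the two image pairs. But $(\dagger)_{m,n}$ concerns types over $\emptyset$, while the $\varphi_i$ are $L(C)$-formulas; equality of $\emptyset$-types does not in general determine the truth of formulas with parameters from $C$, so the inference ``same $\emptyset$-type, hence same $\varphi_i$-values, hence $t = t'$'' does not follow as written. The paper closes this by observing that the uniformity already forces $\oa \mapsto \tp_L(f(\oa)/C)$ to be constant, and that $L$ is binary with quantifier elimination, so that $\tp_L(f(\oa), f(\ob))$ together with the fixed $1$-types over $C$ determines $\tp_L(f(\oa), f(\ob)/C)$ and hence the $\varphi_i$. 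You must add this step (or something equivalent) for the ``only if'' half of $(\dagger)_{m,n}$. Second, a smaller but real issue: stratifying pairs only by their equivalence pattern $P \subseteq \{0, \dots, m\}$ is too coarse a target for the Ramsey step. Demanding that $\tp_L(f_0(\oa), f_0(\ob)/C)$ be constant on \emph{all} pairs with a given set of equal coordinates would in particular force the pair type to be invariant under swapping the roles of $\oa$ and $\ob$ and under reversing the order in individual coordinates, which no amount of thinning can produce in general. One must stratify by the full coordinatewise order pattern, as in the sets $\cD_k$ of Lemma \ref{Lem_ColorBoxes2}; since each relation $\le_t$ with $t \in {}^m 2$ appearing in $(\dagger)_{m,n}$ is contained in a single such class, this finer stratification still suffices for your verification.
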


\begin{proof}
 Let $(I, f_A)_{A \in \EE^{\sstar m}}$ be a $\EE^{\sstar m}$-configuration into $\Mon^n$ over $C \subseteq \Mon$ finite.  Fix $k < \omega$.  By Lemma \ref{Lem_ColorBoxes2}, there exists $n < \omega$ such that, for all $c : \binom{n^m}{\le 2} \rightarrow S^L_{2n}(C)$, there exist $Y_0, \dots, Y_{m-1} \in \binom{n}{k}$ such that, for all $t \in {}^m 2$, $c$ is constant on
 \[
  X_t = \left\{ \{ \oa, \ob \} : \oa, \ob \in \prod_{i < m} Y_i, \oa \le_t \ob \right\}.
 \]
 Let $A = n^{m+1}$ be the $L_m$-structure given in Lemma \ref{Lem_EEmuniverse}.  In particular, this holds for the coloring $c$ given by, for all $\oa, \ob \in n^m$ with $\oa \le_\lex \ob$,
 \[
  c(\{ \oa, \ob \}) = \tp_L( f_A(\oa, 0), f_A(\ob, 0) / C).
 \]
 As $k$ was arbitrary, by compactness, there exists $f : \omega^m \rightarrow \Mon^n$ such that, for all $\oa, \ob \in \omega^m$ and $i < m$, $\Mon \models I(E_i)(f(\oa), f(\ob))$ if and only if $a_i = b_i$ and, for all $t \in {}^m 2$, the function $(\oa, \ob) \mapsto \tp_L(f(\oa), f(\ob) / C)$ is constant for all $\oa, \ob \in \omega^m$ with $\oa \le_t \ob$.  In particular, the function $\oa \mapsto \tp_L(f(\oa) / C)$ is constant.  Since $L$ is a binary language, the type $\tp_L(f(\oa), f(\ob) / C)$ is determined by the type $\tp_L(f(\oa), f(\ob))$.  Therefore, $f$ witnesses that $(\dagger)_{m,n}$ holds.
 \if\papermode0
 { \color{violet}

  \textit{More Details}: Let $\Sigma$ be the type
  \begin{align*}
   \Sigma( \oy_{\oa} )_{\oa \in \omega^m} = & \ \{ \varphi_i(\oy_{\oa}, \oy_{\ob})^{\condiff a_i = b_i} : \oa, \ob \in \omega^m \} \cup \\ & \ \bigcup_{t \in {}^m 2} \{ \psi(\oy_{\oa}, \oy_{\ob}) \leftrightarrow \psi(\oy_{\oa'}, \oy_{\ob'}) : \\ & \ \oa, \ob, \oa', \ob' \in \omega^m, \oa \le_t \ob, \oa' \le_t \ob', \psi(\oy) \in L(C) \}.
  \end{align*}
  Let $\Sigma_0 \subseteq \Sigma$ be finite.  Let $A \subseteq \omega$ be finite such that $\Sigma_0$ mentions only variables of the form $\oy_{\oa}$ for $\oa \in A^m$.  Then, for $k = |A|$ (and $Y_m = \{ 0 \}$), $g \left( \prod_{i \le m} Y_i \right)$ is a witness to the consistency of $\Sigma_0$.  By compactness, $\Sigma$ is consistent, say realized by $( \oc_{\oa} )_{\oa \in \omega^m}$.  Let $f : \omega^m \rightarrow \Mon^n$ be given by $f(\oa) = \oc_{\oa}$.
  
  Clearly, by construction, for all $t \in {}^m 2$ and $\oa, \ob, \oa', \ob' \in \omega^m$ with $\oa \le_t \ob$ and $\oa' \le_t \ob'$, $\tp_L(f(\oa), f(\ob)) = \tp_L(f(\oa'), f(\ob'))$.  On the other hand, if $t, t' \in {}^m 2$ are distinct, then there exists $i < m$ such that $t(i) \neq t'(i)$.  Without loss of generality, say $t(i) = 0$ and $t'(i) = 1$.  Fix $\oa, \ob, \oa', \ob' \in \omega^m$ with $\oa \le_t \ob$ and $\oa' \le_{t'} \ob'$.  Then,
  \[
   \Mon \models \varphi_i(f(\oa), f(\ob)) \wedge \neg \varphi_i(f(\oa'), f(\ob')).
  \]
  Therefore, $\tp_L(f(\oa), f(\ob) / C) \neq \tp_L(f(\oa'), f(\ob') / C)$.  Since $\tp_L(f(\oa) / C) = \tp_L(f(\oa') / C)$ and $\tp_L(f(\ob) / C) = \tp_L(f(\ob') / C)$ and $L$ is binary, we conclude that
  \[
   \tp_L(f(\oa), f(\ob)) \neq \tp_L(f(\oa'), f(\ob'))
  \]
 }
 \fi
\end{proof}

Suppose $(\dagger)_{m,n}$ holds for some $n, m < \omega$, witnessed by $f$.  Let $\ee_i$ be the $i$th standard basis vector.  For each $\ell < n$, let
\[
 V_\ell = \left\{ \oa \in \omega^m : f(\ozero)_\ell = f(\oa)_\ell \right\}.
\]

\begin{lem}\label{Lem_EESubspace}
 There exists $I_\ell \subseteq m$ such that
 \[
  V_\ell = \{ \oa \in \omega^m : (\forall i \in m \setminus I_\ell)[ a_i = 0 ] \}
 \]
 In other words, $V_\ell$ is the $\omega$-span of $\{ \ee_i : i \in I_\ell \}$.
\end{lem}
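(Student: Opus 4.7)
The plan is to set $I_\ell = \{i < m : \ee_i \in V_\ell\}$ and then show directly that $V_\ell$ equals the set of $\oa$ with $\mathrm{supp}(\oa) \subseteq I_\ell$, where $\mathrm{supp}(\oa) = \{i < m : a_i > 0\}$. The starting observation is that $(\dagger)_{m,n}$ makes $\tp_L(f(\ozero), f(\oa))$ depend only on the unique $t \in {}^m 2$ with $\ozero \le_t \oa$, which is the indicator of $\mathrm{supp}(\oa)$; hence whether $f(\ozero)_\ell = f(\oa)_\ell$ depends only on $\mathrm{supp}(\oa)$. Encoding $V_\ell$ as the family $T_\ell = \{\mathrm{supp}(\oa) : \oa \in V_\ell\} \subseteq \Pow(m)$, it suffices to show that $T_\ell$ is closed under taking subsets and under binary unions; then $T_\ell = \Pow(I_\ell)$ for $I_\ell = \bigcup T_\ell$, and this $I_\ell$ coincides with $\{i : \{i\} \in T_\ell\} = \{i : \ee_i \in V_\ell\}$.

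For subset closure, suppose $S \in T_\ell$ and $S' \subseteq S$. Choose $\oa$ with $a_i = 1$ for $i \in S'$ and $a_i = 0$ otherwise, and $\ob$ with $b_i = 2$ for $i \in S'$, $b_i = 1$ for $i \in S \setminus S'$, and $b_i = 0$ for $i \notin S$. A direct check shows $\ozero \le_{\chi_S} \ob$ and $\oa \le_{\chi_S} \ob$, so $(\dagger)_{m,n}$ with $t = t' = \chi_S$ gives $\tp_L(f(\ozero), f(\ob)) = \tp_L(f(\oa), f(\ob))$. Since $S \in T_\ell$, $f(\ozero)_\ell = f(\ob)_\ell$, and the type equality then forces $f(\oa)_\ell = f(\ob)_\ell$; combining, $f(\ozero)_\ell = f(\oa)_\ell$, placing $S' = \mathrm{supp}(\oa)$ in $T_\ell$.

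For union closure, suppose $S, S' \in T_\ell$. Let $\oa$ be the indicator of $S \cup S'$, and let $\ob$ agree with $\oa$ off $S$ but vanish on $S$, so that $\mathrm{supp}(\ob) = S' \setminus S$. The subset-closure step applied to $S' \setminus S \subseteq S'$ yields $f(\ozero)_\ell = f(\ob)_\ell$. A coordinatewise check shows $\ob \le_{\chi_S} \oa$; pairing this with any witness $\ozero \le_{\chi_S} \od$ to $S \in T_\ell$ (so $\mathrm{supp}(\od) = S$ and $f(\ozero)_\ell = f(\od)_\ell$) and applying $(\dagger)_{m,n}$ transfers the equality of $\ell$-th coordinates from $(f(\ozero), f(\od))$ to $(f(\ob), f(\oa))$, giving $f(\ob)_\ell = f(\oa)_\ell$. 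Together with the previous equality, $f(\ozero)_\ell = f(\oa)_\ell$, so $S \cup S' \in T_\ell$.

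The main technical wrinkle is arranging the three-element configuration in the subset-closure step so that both pairs $(\ozero, \ob)$ and $(\oa, \ob)$ sit in the same $\le_{\chi_S}$-relation; this is why the intermediate $\ob$ must strictly dominate $\oa$ on $S'$ while still being strictly positive on $S \setminus S'$. With that triangle of pairs in place, both closure properties reduce to bookkeeping with $L$-types (using that the atomic formula $y_{0,\ell} = y_{1,\ell}$ is determined by $\tp_L$), and the power-set structure of $T_\ell$ follows from the finiteness of $m$.
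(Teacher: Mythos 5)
Your proposal is correct and follows essentially the same route as the paper: both arguments exploit that $(\dagger)_{m,n}$ transfers the coordinate equality $f(\cdot)_\ell = f(\cdot)_\ell$ between any two pairs lying in the same $\le_t$-relation, so that membership in $V_\ell$ depends only on the support and $V_\ell$ is closed under the operations needed to be a span. The paper packages this as ``$\ee_i \in V_\ell$ for every $i$ appearing in some support'' plus closure under addition (via $\oa \le_t \oa + \ob$ paired with $\ozero \le_t \ob$), which is just a repackaging of your subset-closure and union-closure of the family $T_\ell$.
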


\begin{proof}
 Let $I_\ell = \{ i < m : (\exists \oa \in V_\ell)[ a_i > 0 ] \}$.  We show this works.

 Clearly $\ozero \in V_\ell$.  Fix $\oa \in V_\ell$ non-zero and $i < m$ such that $a_i > 0$.  Notice that, for all $t \in {}^m 2$,
 \[
  \ozero \le_t \oa \mathiff \ozero \le_t \oa + \ee_i.
 \]
 By $(\dagger)_{m,n}$, since $f(\ozero)_\ell = f(\oa)_\ell$, $f(\ozero)_\ell = f(\oa + \ee_i)_\ell$.  Thus, $f(\oa)_\ell = f(\oa + \ee_i)_\ell$.  By $(\dagger)_{m,n}$, $f(\ozero)_\ell = f(\ee_i)_\ell$.  Thus, $\ee_i \in V_\ell$.
 
 Suppose that $\oa, \ob \in V_\ell$.  Then,
 \[
  f(\ozero)_\ell = f(\oa)_\ell \text{ and } f(\ozero)_\ell = f(\ob)_\ell.
 \]
 By $(\dagger)_{m,n}$, $f(\oa)_\ell = f(\oa + \ob)_\ell$.  Therefore, $\oa + \ob \in V_\ell$.
 
 Putting these facts together, we get the desired conclusion. 
\end{proof}

\begin{lem}\label{Lem_EEEqual}
 Suppose $(\dagger)_{m,n}$ holds, witnessed by $f$.  For all $\ell, \ell' < n$ and $\oa \in \omega^m$, if $f(\ozero)_\ell = f(\oa)_{\ell'}$, then $\oa \in V_\ell \cap V_{\ell'}$.
\end{lem}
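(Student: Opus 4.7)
The plan is to exploit the uniformity of $(\dagger)_{m,n}$ by applying it to the three pairs $(\ozero, \oa)$, $(\ozero, 2\oa)$, and $(\oa, 2\oa)$, all of which stand in the same $\le_t$ relation for a single choice of $t \in {}^m 2$, namely the indicator of the support of $\oa$. This common $t$ is the key that lets us chain type equalities across the three pairs to force the desired coordinate equalities on $f(\oa)$.

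First, I will invoke the observation (stated just after the definition of $(\dagger)_{m,n}$) that $\oa \mapsto \tp_L(f(\oa))$ is constant; in particular, $f(\ozero)_\ell = f(\ozero)_{\ell'}$ if and only if $f(\oa)_\ell = f(\oa)_{\ell'}$, a fact that will be used at the end. Then, fixing $\ell, \ell' < n$ and $\oa \in \omega^m$ with $f(\ozero)_\ell = f(\oa)_{\ell'}$, and taking $t$ to be the indicator of the support of $\oa$, I will verify the (immediate) relations $\ozero \le_t \oa$, $\ozero \le_t 2\oa$, and $\oa \le_t 2\oa$.

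Applying $(\dagger)_{m,n}$ to $(\ozero, \oa)$ against $(\ozero, 2\oa)$ yields $\tp_L(f(\ozero), f(\oa)) = \tp_L(f(\ozero), f(2\oa))$, so the given equality $f(\ozero)_\ell = f(\oa)_{\ell'}$ upgrades to $f(\ozero)_\ell = f(2\oa)_{\ell'}$. Applying $(\dagger)_{m,n}$ to $(\ozero, \oa)$ against $(\oa, 2\oa)$ yields $\tp_L(f(\ozero), f(\oa)) = \tp_L(f(\oa), f(2\oa))$, hence $f(\oa)_\ell = f(2\oa)_{\ell'}$. Chaining these two equalities gives $f(\oa)_\ell = f(\ozero)_\ell$, which is precisely $\oa \in V_\ell$.

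For the remaining containment $\oa \in V_{\ell'}$, I will combine $f(\oa)_{\ell'} = f(\ozero)_\ell = f(\oa)_\ell$ with the constancy of $\tp_L(f(\oa))$ to conclude that $f(\ozero)_\ell = f(\ozero)_{\ell'}$, and therefore $f(\oa)_{\ell'} = f(\ozero)_\ell = f(\ozero)_{\ell'}$. The only delicate point is ensuring that the same $t$ genuinely governs all three pairs, which the choice $t = $ support indicator of $\oa$ handles uniformly (and trivially in the degenerate case $\oa = \ozero$, where $t = \ozero$ and $2\oa = \ozero$).
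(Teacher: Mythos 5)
Your proposal is correct and follows essentially the same route as the paper: both arguments apply $(\dagger)_{m,n}$ to the pairs $(\ozero, \oa)$, $(\ozero, 2\oa)$, and $(\oa, 2\oa)$, which all lie in the same $\le_t$ relation, and chain the resulting coordinate equalities. The only cosmetic difference is at the very end, where you derive $\oa \in V_{\ell'}$ from the constancy of $\oa \mapsto \tp_L(f(\oa))$ rather than from one more pair-comparison as the paper does; both are immediate consequences of $(\dagger)_{m,n}$.
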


\begin{proof}
 By $(\dagger)_{m,n}$, $f(\ozero)_\ell = f(2 \oa)_{\ell'}$, hence $f(\oa)_{\ell'} = f(2 \oa)_{\ell'}$.  By $(\dagger)_{m,n}$, $f(\ozero)_{\ell'} = f(\oa)_{\ell'}$, hence $\oa \in V_{\ell'}$.  On the other hand, by $(\dagger)_{m,n}$, $f(\oa)_\ell = f(2 \oa)_{\ell'}$, hence $f(\ozero)_\ell = f(\oa)_\ell$.  Thus, $\oa \in V_\ell$.
\end{proof}

We are now ready to compute $\Rk_\EE(n)$.

\begin{expl}[$\KK = \EE$]\label{Expl_ERandGr}
 For all $n \ge 1$,
 \[
  \Rk_\EE(n) = \begin{cases} 1 & \text{ if } n = 1, \\ n^2 - 1 & \text{ if } n \ge 2 \end{cases}.
 \]
\end{expl}

\begin{proof}
 Since $\EE$ is reflexive and symmetric, Proposition \ref{Prop_KKquad1} says that $\Rk_\EE(n) \ge n^2 - 1$.  Moreover, Proposition \ref{Prop_EELowerBound} says that $\Rk_\EE(1) \ge 1$.
 
 Towards a contradiction, suppose $\Rk_\EE(1) \ge 2$; hence, $(\dagger)_{2,1}$ holds, say witnessed by $f : \omega^2 \rightarrow \Mon$.  By \eqref{Eq_2mtypes}, this gives us at least four distinct $2$-$L$-types over $\emptyset$.  On the other hand, there are only three such types, a contradiction.
 
 So it suffices to show that $\Rk_\EE(n) < n^2$ when $n \ge 2$.  To accomplish this, we prove, by induction on $n$, that $(\dagger)_{n^2,n}$ fails.
 
 We will deal with the base case of $n = 2$ at the end.  Fix $n \ge 3$ and assume that $(\dagger)_{(n-1)^2,n-1}$ fails.  Towards a contradiction, suppose that $(\dagger)_{n^2,n}$ holds, say witnessed by $f : \omega^{n^2} \rightarrow \Mon^n$.
 
 \begin{claim*}
  For all $\ell < n$, $| I_\ell | < (n-1)^2$ (where $I_\ell$ is as defined in Lemma \ref{Lem_EESubspace}, for this choice of $f$).
 \end{claim*}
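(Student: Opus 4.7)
The plan is to argue the contrapositive: suppose $|I_\ell| \ge (n-1)^2$ for some $\ell < n$; I will extract from $f$ a witness to $(\dagger)_{(n-1)^2, n-1}$, contradicting the inductive hypothesis. After relabeling coordinates of $\omega^{n^2}$, I may assume that $\{0, 1, \dots, (n-1)^2 - 1\} \subseteq I_\ell$. Let $W$ be the $\omega$-span of $\{\ee_0, \dots, \ee_{(n-1)^2-1}\}$ in $\omega^{n^2}$, which Lemma \ref{Lem_EESubspace} lets me identify with $\omega^{(n-1)^2}$ through the evident embedding $\iota$; since $W \subseteq V_\ell$, the $\ell$-th coordinate of $f$ is the constant $c := f(\ozero)_\ell$ on $W$. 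Define $g : \omega^{(n-1)^2} \to \Mon^{n-1}$ by setting $g(\oa) = (f(\iota(\oa))_k)_{k \ne \ell}$.

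The goal is then to verify that $g$ witnesses $(\dagger)_{(n-1)^2, n-1}$. The forward direction (if $t = t'$, then the projected $L$-types agree) is immediate: the lifts $\iota(\oa), \iota(\ob), \iota(\oa'), \iota(\ob')$ satisfy $\iota(\oa) \le_{\tilde t} \iota(\ob)$ and $\iota(\oa') \le_{\tilde t} \iota(\ob')$ for the zero-padded $\tilde t \in {}^{n^2} 2$, so $(\dagger)_{n^2, n}$ yields equal full $L$-types, and projection preserves type.

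The reverse direction (if the projected types agree, then $t = t'$) is the main obstacle, and I would argue its contrapositive. Suppose $t \ne t'$; by $(\dagger)_{n^2, n}$, some atomic $L$-formula $\varphi$ distinguishes the full tuples $(f(\iota\oa), f(\iota\ob))$ and $(f(\iota\oa'), f(\iota\ob'))$. If $\varphi$ avoids the $\ell$-th position, it distinguishes the projected tuples and we are done. When $\varphi$ involves the $\ell$-th position, the strategy is to convert $\varphi$ into a distinguishing formula on the projection using two tools: first, the constancy of $\tp_L(f(\oa))$ guaranteed by $(\dagger)$ renders any intra-tuple or $\ell$-on-$\ell$ cross-tuple atomic formula type-independent, so such a $\varphi$ could not have distinguished in the first place; second, Lemma \ref{Lem_EEEqual} equates any cross-tuple equality $c = f(\iota\ob)_j$ (for $j \ne \ell$) with the condition $\iota\ob \in V_j$, which is in turn equivalent to the projected atomic formula $f(\ozero)_j = f(\iota\ob)_j$.

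The hardest case will be cross-tuple edges $R(c, f(\iota\ob)_j)$ for $j \ne \ell$, which do not transparently descend to the projection. I expect the resolution to proceed by splitting on whether $\iota\ob \in V_j$: if it is, then $f(\iota\ob)_j = f(\ozero)_j$, and the same equality for $\iota\ob'$ is detectable on the projected side by the previous paragraph, so $R(c, f(\iota\ob)_j)$ collapses to the constant value $R(f(\ozero)_\ell, f(\ozero)_j)$; if it is not, the uniformity of types under $(\dagger)$ should force $R(c, f(\iota\ob)_j)$ to depend only on the support pattern of $\iota\ob$ (hence on $t$), enabling the distinction to be recovered from an atomic formula on the projection. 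Carrying out this case analysis cleanly is the principal technical hurdle; once it is complete, $g$ witnesses $(\dagger)_{(n-1)^2, n-1}$, completing the contradiction.
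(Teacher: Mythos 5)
Your overall strategy is the same as the paper's: assume $|I_\ell| \ge (n-1)^2$, restrict $f$ to the $\omega$-span of $(n-1)^2$ many basis vectors $\ee_i$ with $i \in I_\ell$, delete the $\ell$-th coordinate, and check that the resulting map witnesses $(\dagger)_{(n-1)^2,\,n-1}$, contradicting the inductive hypothesis. The forward direction of the verification is handled correctly. The gap is exactly where you flag it: for the reverse direction, the case of a distinguishing atomic formula $R(f(\iota\oa)_\ell, f(\iota\ob)_j)$ with $j \neq \ell$ is left as a speculative case split on whether $\iota\ob \in V_j$, and the branch $\iota\ob \notin V_j$ is not actually carried out (``the uniformity of types \dots should force \dots'' is not an argument, and knowing that the truth value depends only on $t$ would not by itself let you recover the distinction from an atomic formula that survives the projection).

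The resolution is much simpler than your case split, and you already have both ingredients. Since $\iota\oa$ and $\iota\ob$ both lie in $V_\ell$, you have $f(\iota\oa)_\ell = c = f(\iota\ob)_\ell$ as literal elements of $\Mon$, so the atomic formula $R(f(\iota\oa)_\ell, f(\iota\ob)_j)$ is evaluated at exactly the same arguments as $R(f(\iota\ob)_\ell, f(\iota\ob)_j)$, which is an \emph{intra-tuple} atomic formula of the single tuple $f(\iota\ob)$; its truth value is therefore fixed by the constancy of the $1$-type $\oa \mapsto \tp_L(f(\oa))$, exactly as in the case you already dispatched (and symmetrically for $R(f(\iota\ob)_\ell, f(\iota\oa)_j)$). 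Hence \emph{every} atomic formula involving position $\ell$ of either tuple has constant truth value across all pairs drawn from the span, so none of them can distinguish the two full types; since the full types do differ when $t \neq t'$ by $(\dagger)_{n^2,n}$, some atomic formula avoiding position $\ell$ must distinguish them, and that formula survives the projection. No appeal to Lemma \ref{Lem_EEEqual} or to membership in $V_j$ is needed. With this observation inserted, your proof closes and is in substance identical to the paper's.
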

 
 \begin{proof}[Proof of Claim]
  Fix $\ell < n$.  Towards a contradiction, suppose $| I_\ell | \ge (n-1)^2$.  Let $m = (n-1)^2$, let $\sigma : m \rightarrow I_\ell$ be any injective function, and, for each $\oa \in \omega^m$, let $\oa_\sigma \in \omega^{n^2}$ be given by
  \[
   a_{\sigma,i} = \begin{cases} a_j & \text{ if } i = \sigma(j), \\ 0 & \text{ if } i \notin \image(\sigma) \end{cases}.
  \]
  In particular, $\oa_\sigma \in V_\ell$.  Hence, for all $\oa, \ob \in \omega^m$, $f(\oa_\sigma)_\ell = f(\ob_\sigma)_\ell$.  Define $f' : \omega^m \rightarrow \Mon^{n-1}$ as follows: For each $\oa \in \omega^m$, let $f'(\oa) = f(\oa_\sigma)$ restricted to exclude the $\ell$th coordinate.  It is easy to check that $f'$ satisfies $(\dagger)_{m,n-1}$, contrary to the inductive hypothesis.
  \if\papermode0
  { \color{violet}
   
   \textit{More Details:} Fix $t, t' \in {}^m 2$ and $\oa, \ob, \oa', \ob' \in \omega^m$ with $\oa \le_t \ob$ and $\oa' \le_{t'} \ob'$.  Then, $\oa_\sigma$ and $\ob_\sigma$ are ``in the same direction'' as $\oa'_\sigma$ and $\ob'_\sigma$ if and only if $t = t'$.  Hence, $\tp_L(f(\oa_\sigma), f(\ob_\sigma)) = \tp_L(f(\oa'_\sigma), f(\ob'_\sigma))$ if and only if $t = t'$.  On the other hand, $f(\oa_\sigma)_\ell = f(\ob_\sigma)_\ell$ and $f(\oa'_\sigma)_\ell = f(\ob'_\sigma)_\ell$.  Therefore, $\tp_L(f'(\oa), f'(\ob)) = \tp_L(f'(\oa'), f'(\ob'))$ if and only if $t = t'$.
  }
  \fi
 \end{proof}
 
 Let $m = n^2$ and let
 \[
  V = \left\{ \oa \in 2^m : (\exists \ell < n)(\forall i \in m \setminus I_\ell)[ a_i = 0 ] \right\}.
 \]
 In other words, $V$ is the union of $2^m \cap V_\ell$ over all $\ell < n$.  By the claim, for each $\ell < n$, $|I_\ell| \le (n-1)^2 - 1 = n^2 - 2n$.  Thus,
 \[
  \left| 2^m \cap V_\ell \right| \le 2^{n^2 - 2n}.
 \]
 Therefore,
 \[
  \left| 2^m \setminus V \right| \ge 2^{n^2} - n2^{n^2 - 2n}.
 \]
 
 \begin{claim*}
  $2^{n^2} - n2^{n^2 - 2n} > 2^{n^2 - 1} + 2^{\binom{n+1}{2} - 1}$.
 \end{claim*}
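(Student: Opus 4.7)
The plan is to reduce the inequality to a simple one-variable scalar estimate by dividing through by $2^{n^2-1}$. Explicitly, the claimed inequality
\[
 2^{n^2} - n \cdot 2^{n^2-2n} > 2^{n^2-1} + 2^{\binom{n+1}{2}-1}
\]
is equivalent, after dividing both sides by $2^{n^2-1}$, to
\[
 2 - \frac{n}{2^{2n-1}} > 1 + 2^{\binom{n+1}{2} - n^2},
\]
and since $\binom{n+1}{2} - n^2 = -\binom{n}{2}$, this is equivalent to
\[
 \frac{n}{2^{2n-1}} + 2^{-\binom{n}{2}} < 1. \tag{$\ast$}
\]
So the entire claim reduces to verifying $(\ast)$ for $n \ge 3$.

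For the base case $n = 3$, I would just compute: $\frac{3}{2^5} + 2^{-3} = \frac{3}{32} + \frac{4}{32} = \frac{7}{32} < 1$. For $n \ge 3$ I would argue each summand in $(\ast)$ is non-increasing in $n$, so the sum is bounded above by its value at $n=3$. For the first summand, the ratio
\[
 \frac{(n+1)/2^{2n+1}}{n/2^{2n-1}} = \frac{n+1}{4n} \le \frac{1}{2}
\]
for all $n \ge 1$, so $n \mapsto n/2^{2n-1}$ is strictly decreasing. For the second summand, $\binom{n}{2}$ is strictly increasing, so $2^{-\binom{n}{2}}$ is strictly decreasing. Hence $(\ast)$ holds for all $n \ge 3$.

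There is no real obstacle here; the only thing to be careful about is the algebraic identity $\binom{n+1}{2} - n^2 = -\binom{n}{2}$, which is immediate from $\binom{n+1}{2} = \frac{n(n+1)}{2}$ and $n^2 = \frac{n(n+1)}{2} + \frac{n(n-1)}{2}$. The inequality $(\ast)$ is extremely loose for large $n$ (both terms decay to zero), so the argument has significant slack. This completes the proof of the claim, which in turn (via the preceding claim bounding $|I_\ell|$ and the count $|2^m \setminus V| \ge 2^{n^2} - n \cdot 2^{n^2-2n}$) will contradict the injection into $\cGs^n \cup \cGns^n$ established earlier, closing off the induction step.
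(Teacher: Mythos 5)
Your proof is correct, and it is essentially the same elementary argument as the paper's: both reduce the claim to showing $n\,2^{n^2-2n} + 2^{\binom{n+1}{2}-1} < 2^{n^2-1}$ for $n \ge 3$ and verify this by routine exponent estimates (the paper bounds each summand by $2^{n^2-2}$ directly, while you normalize by $2^{n^2-1}$ and combine a base case at $n=3$ with monotonicity in $n$). The algebraic identity $\binom{n+1}{2}-n^2 = -\binom{n}{2}$ and the monotonicity checks are all correct, so there is nothing to fix.
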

 
 \begin{proof}[Proof of Claim]
  Since $n \ge 3$, $(n+1)(n-1) > \frac{1}{2} n (n+1)$.  Thus, $n^2 - 2 > \binom{n+1}{2} - 1$.  So
  \[
   2^{n^2 - 2} > 2^{\binom{n+1}{2} - 1}.
  \]
  Similarly, $(n+1)(n-1) > n(n-1)$.  Thus, $n^2 - 2 > n^2 - n - 1$, so
  \[
   2^{n^2 - 2} > 2^{n-1} 2^{n^2 - 2n}.
  \]
  Since $n \ge 3$, $n < 2^{n-1}$.  Therefore,
  \[
   2^{n^2 - 2} > n 2^{n^2 - 2n}.
  \]
  Putting these together, we get
  \[
   2^{n^2 - 1} > 2^{\binom{n+1}{2} - 1} + n 2^{n^2 - 2n}.
  \]
  This gives us the desired conclusion.
 \end{proof}
 
 Therefore, by \eqref{Eq_2mtypes},
 \[
  \left| \left\{ \tp_L(f(\ozero), f(\oa)) : \oa \in 2^m \setminus V \right\} \right| = | 2^m \setminus V | > | \cGs^n | + \frac{1}{2} | \cGns^n |.
 \]
 However, by Lemma \ref{Lem_EEEqual}, for each $\oa \in 2^m \setminus V$ and all $\ell, \ell' < n$, $f(\ozero)_\ell \neq f(\oa)_{\ell'}$.  Hence, each type $\tp_L(f(\ozero), f(\oa))$ corresponds to a unique element of $\cG^n$ as in the proof of Proposition \ref{Prop_KKquad2}.  Since $\EE^{\sstar m}$ is symmetric, as in the proof of Proposition \ref{Prop_KKquad3}, we conclude that there are at most $| \cGs^n | + \frac{1}{2} | \cGns^n |$ such types.  This is a contradiction.
 
  For the base case, towards a contradiction, suppose that $(\dagger)_{4,2}$ holds.  This argument proceeds similarly to the general inductive argument.  Notice that, for all $\ell < 2$, $|I_\ell| \le 1$ (this follows from a similar argument to the first claim).  Thus, $| 2^4 \setminus V | \ge 2^4 - 3 = 13$.  On the other hand,
 \[
  | \cG^2 | + \frac{1}{2} | \cGns^2 | = 12.
 \]
\end{proof}


\subsection{Ranks and the Independence Property}\label{Subsect_RankDivide}

How do the ranks studied above interact with model-theoretic dividing lines (in particular, the independence property)?

As long as $\LO$-rank is finite, $\LO$-rank grows linearly if $T$ has NIP and grows quadratically if $T$ has the independence property.

\begin{thm}\label{Thm_NIPLinearQuadLO}
 Let $T$ be any complete first-order theory such that $\Rk_\LO(1) < \infty$.
 \begin{enumerate}
  \item If $T$ has NIP, then there exists $C \in \mathbb{R}$ such that, for all $n \ge 1$,
   \[
    \Rk_\LO(n) \le Cn.
   \]
  \item If $T$ has the independence property, then there exists $C \in \mathbb{R}$ such that, for sufficiently large $n$,
   \[
    \Rk_\LO(n) \ge Cn^2.
   \]
 \end{enumerate}
\end{thm}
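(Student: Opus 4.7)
For part (1), I apply Proposition \ref{Prop_NIPLOopdim}, which states that $\Rk_\LO(\pi) = \opDim(\pi)$ for every partial type $\pi$ under NIP. Applied to $\pi = (\oy = \oy)$ with $|\oy| = n$, viewed as the product of $n$ singleton types, and combined with the additivity of op-dimension (Example \ref{Expl_LONIP}), this yields
\[
 \Rk_\LO(n) = \opDim(n) = n \cdot \opDim(1) = n \cdot \Rk_\LO(1).
\]
Setting $C := \Rk_\LO(1)$, finite by hypothesis, gives $\Rk_\LO(n) \le Cn$.

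For part (2), fix a formula $\varphi(y, z)$ witnessing IP in $T$; by a standard projection argument we may assume $|y| = 1$, and set $d := |z|$. By IP and compactness, for every $N$ there exist $(a_k)_{k < N}$ in $\Mon$ and $(c_S)_{S \subseteq [N]}$ in $\Mon^d$ with $\Mon \models \varphi(a_k, c_S)$ if and only if $k \in S$. Set $n' := \lfloor n/(d+1) \rfloor$; for $n \ge 2(d+1)$, $(n')^2 \ge n^2/(4(d+1)^2) =: Cn^2$. I partition the $n$ coordinates of $\oy$ into $n'$ pairwise disjoint \emph{left slots} of width $1$ and $n'$ pairwise disjoint \emph{right slots} of width $d$, occupying $n'(d+1) \le n$ coordinates in total. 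For each $(i, j) \in [n']^2$ define a formula $\psi_{i,j}(\oy_0, \oy_1)$ by applying $\varphi$ with its $y$-argument taken from the $i$th left slot of $\oy_0$ and its $z$-argument taken from the $j$th right slot of $\oy_1$.

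I would then apply Lemma \ref{Lem_FiniteConfig} to confirm that the $\psi_{i,j}$ yield an $\LO^{\sstar m}$-configuration into $\oy = \oy$ for $m := (n')^2$. Given any finite $A \in \LO^{\sstar m}$ with linear orders $<_{(i,j)}$ indexed by $(i, j) \in [n']^2$, invoke IP with a shattering sequence of size $|A| \cdot n'$: obtain $(a_{(a', i')})_{(a', i') \in A \times [n']}$ in $\Mon$ and $(c_S)_{S \subseteq A \times [n']}$ in $\Mon^d$ with $\Mon \models \varphi(a_{(a', i')}, c_S)$ iff $(a', i') \in S$. Define $f \colon A \to \Mon^n$ by placing $a_{(a, i)}$ in the $i$th left slot of $f(a)$ and $c_{S_{a, j}}$ in the $j$th right slot of $f(a)$, where $S_{a, j} := \{(a', i') \in A \times [n'] : a' <_{(i', j)} a\}$. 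A direct computation gives $\Mon \models \psi_{i,j}(f(a), f(a'))$ iff $(a, i) \in S_{a', j}$ iff $a <_{(i, j)} a'$. Trichotomy, irreflexivity and transitivity of each $\psi_{i,j}$ on $f(A)$ follow from the corresponding properties of $<_{(i,j)}$ on $A$, and injectivity of $f$ follows from distinctness of the shattering elements. Thus $\Rk_\LO(n) \ge (n')^2 \ge Cn^2$.

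The main obstacle is the design of the partition so that IP can be exploited coherently across all $(n')^2$ pairs simultaneously. The key observation is that the $y$- and $z$-arguments of $\varphi$ play asymmetric roles — the $y$-slot ranges over a shatterable sequence, while the $z$-slot realizes an arbitrary subset — so assigning disjoint coordinate groups to these two roles prevents the coding constraints of different pairs $(i, j)$ from interacting. A single shattering sequence in $\Mon$ of size $|A| \cdot n'$ then simultaneously supplies all $y$-parts, and the corresponding parameters $c_S$ simultaneously supply all $z$-parts, producing the required free superposition of $(n')^2$ linear orders.
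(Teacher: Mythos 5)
Your part (1) is the paper's argument: additivity of $\LO$-rank under NIP (Proposition \ref{Prop_NIPLOopdim} together with additivity of op-dimension) gives $\Rk_\LO(n) = n \cdot \Rk_\LO(1)$, so take $C = \Rk_\LO(1)$.

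Part (2) is correct but takes a genuinely different route. The paper extracts a $\GG$-configuration into $\Mon^k$ from the independence property (Theorem \ref{Thm_CCExamples} (2)), invokes the random-graph computation of Proposition \ref{Prop_KKquad1} to obtain a parameter-free $\LO^{\sstar (n^2-1)}$-configuration into $n$-tuples of the random graph, and transfers it by composition (Proposition \ref{Prop_ComposeConfig}) to get $\Rk_\LO(kn) \ge n^2 - 1$. You instead code $(n')^2$ independent linear orders directly from a shattered family for an IP formula $\varphi(y,z)$, splitting the coordinates into left slots carrying shattered elements and right slots carrying the parameters $c_S$ that cut out the initial segments $S_{a,j}$; the verification against Lemma \ref{Lem_FiniteConfig} goes through (including at $a = a'$, where irreflexivity comes out right, and the map $I$ sending $<_{(i,j)}$ to $\psi_{i,j}$ is uniform in $A$ as required), and $\lfloor n/(d+1)\rfloor^2 \ge n^2/(4(d+1)^2)$ for $n \ge 2(d+1)$ gives the stated bound. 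In effect you re-derive by hand the left/right asymmetry that the paper packages as a bipartite-graph count inside the random graph. What each approach buys: the paper's route reuses machinery that also yields Propositions \ref{Prop_GGRankNotAdditive} and \ref{Prop_EERankNotAdditive} for other classes with no extra work, and isolates the sharp value $n^2-1$ for the random graph itself; your route is self-contained, avoids the detour through $\cC_\GG$ and the composition lemma, and makes the quadratic growth combinatorially transparent (at the cost of a worse constant and the small unproved-but-standard reduction to $|y|=1$, which your argument does not actually need). Neither argument uses the hypothesis $\Rk_\LO(1) < \infty$ in part (2); it is only needed for part (1).
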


\begin{proof}
 (1): As noted in Example \ref{Expl_LONIP}, if $T$ has NIP, then $\Rk_\LO$ is additive.  Thus, if we let $C = \Rk_\LO(1)$, $\Rk_\LO(n) = Cn$ for all $n \ge 1$.
 
 (2): Assume that $T$ has the independence property and $\Mon$ is a monster model for $T$.  By Theorem \ref{Thm_CCExamples} (2) and Lemma \ref{Lem_InjectiveConfig}, there exists an injective $\GG$-configuration into $\Mon^k$ for some $k < \omega$.  Moreover, by Proposition \ref{Prop_KKquad1}, for all $m < \omega$, there exists a parameter-free $\LO^{\sstar (m^2 - 1)}$-configuration into $\Mon_1^m$, where $\Mon_1$ is a monster model for the theory of the \fraisse limit of $\GG$.  By Proposition \ref{Prop_ComposeConfig}, there exists an $\LO^{\sstar (m^2 - 1)}$-configuration into $\Mon^{km}$.  Therefore,
 \[
  \Rk_\LO(km) \ge m^2 - 1.
 \]
 For $n \ge k$, let $m = \lfloor n / k \rfloor$.  Then, by Lemma \ref{Lem_RkKKOrder},
 \[
  \Rk_\LO(n) \ge \Rk_\LO(km) \ge m^2 - 1 \ge \frac{1}{k^2} n^2 - \frac{2}{k} n.
 \]
\end{proof}

In the next few examples, we examine the applicability of the preceding theorem.

\begin{expl}\label{Expl_RKLO10}
 Let $T$ be a complete theory in some language $L$.  Suppose that all indiscernible sequences of singletons are set indiscernible.  By Remark \ref{Rem_GenIndisc}, we can make any $\LO$-configuration into an indiscernible one, so there exists no $\LO$-configuration into singletons of $T$.  Thus, $\Rk_\LO(1) = 0$.  Therefore, for any such theory $T$, $T$ has NIP if and only if there exists $C \in \mathbb{R}$ such that, for all $n \ge 1$, $\Rk_\LO(n) \le Cn$.  This applies, for example, to the theory of the \fraisse limit of any irreflexive, symmetric \fraisse class in a finite relational language.
\end{expl}

\begin{expl}
 Let $L$ consist of infinitely many binary relation symbols $<_i$ for $i < \omega$ and let $T$ be the model companion of the theory which says each $<_i$ is a linear order.  Clearly $T$ has quantifier elimination, so $T$ has NIP.  Thus, $\LO$-rank and op-dimension coincide.  Therefore, $\Rk_\LO(1) = \infty$.  So, there are examples of NIP theories for which the theorem does not apply.  If we replace ``linear order'' with ``partial order'' in the definition of $T$, we obtain an example of a theory with the independence property such that $\Rk_\LO(1) = \infty$.
\end{expl}

Similar to $\LO$-rank, if the target theory has the independence property and $\EE$-rank is finite, then $\EE$-rank necessarily grows quadratically.

\begin{prop}\label{Prop_EERankNotAdditive}
 Let $T$ be any complete first-order theory with the independence property such that $\Rk_\EE(1) < \infty$.  Then, there exists $C \in \mathbb{R}$ such that, for sufficiently large $n$,
 \[
  \Rk_\EE(n) \ge Cn^2.
 \]
\end{prop}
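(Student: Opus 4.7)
The plan is to mirror part (2) of Theorem \ref{Thm_NIPLinearQuadLO}, with $\EE$ playing the role of $\LO$. Since $T$ has the independence property, Theorem \ref{Thm_CCExamples}(2) produces a $\GG$-configuration into $\Mon^k$ for some $k < \omega$. On the index side, I would invoke Example \ref{Expl_ERandGr}, which (for $n \ge 2$) provides a parameter-free $\EE^{\sstar (n^2 - 1)}$-configuration into $\oy = \oy$ for an $n$-tuple $\oy$ in a monster model $\Mon_1$ of the theory of the \fraisse limit of $\GG$.

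With these two ingredients in hand, apply Proposition \ref{Prop_ComposeConfig} with $\KK_0 = \EE^{\sstar(n^2-1)}$, $\KK_1 = \GG$, and $\pi(\oz) = (\oz = \oz)$ for a $k$-tuple $\oz$. The result is an $\EE^{\sstar(n^2-1)}$-configuration into $\pi^{\times n}$, i.e.\ into $\oz = \oz$ on a $kn$-tuple; hence $\Rk_\EE(kn) \ge n^2 - 1$. For $m$ sufficiently large, set $n = \lfloor m/k \rfloor$, so that $kn \le m$ and $n \ge 2$. By Lemma \ref{Lem_RkKKOrder},
\[
  \Rk_\EE(m) \ge \Rk_\EE(kn) \ge n^2 - 1 \ge \frac{1}{k^2} m^2 - \frac{2}{k} m - 1,
\]
and any constant $C < 1/k^2$ then yields $\Rk_\EE(m) \ge C m^2$ for all sufficiently large $m$.

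The argument itself is essentially bookkeeping once the two external inputs are granted, so the real obstacle is concealed in Example \ref{Expl_ERandGr}: one must show that $n^2 - 1$ independent equivalence relations can be simultaneously coded onto an $n$-tuple of random-graph vertices by parameter-free formulas, paralleling the $\LO$-statement used in Proposition \ref{Prop_KKquad1}. The hypothesis $\Rk_\EE(1) < \infty$ plays no direct role in the quadratic bound itself and is present only so that the intended corollary --- that $\EE$-rank is not additive --- is non-vacuous; if $\Rk_\EE(1) = \infty$ then $\Rk_\EE(n)$ is infinite for all $n$ and the quadratic growth rate is trivial.
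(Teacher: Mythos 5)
Your proposal is correct and follows essentially the same route as the paper: the paper's proof of Proposition \ref{Prop_EERankNotAdditive} likewise combines Theorem \ref{Thm_CCExamples}~(2), the parameter-free $\EE^{\sstar(n^2-1)}$-configuration from Example \ref{Expl_ERandGr}, Proposition \ref{Prop_ComposeConfig}, and Lemma \ref{Lem_RkKKOrder} with $n = \lfloor m/k \rfloor$, exactly as in the proof of Theorem \ref{Thm_NIPLinearQuadLO}~(2). Your closing observations about where the real work is hidden and about the role of the hypothesis $\Rk_\EE(1) < \infty$ are also accurate.
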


\begin{proof}
 This is similar to the proof of Theorem \ref{Thm_NIPLinearQuadLO} (2).
 \if\papermode0
 { \color{violet}
 
  \emph{More Detail:} By Theorem \ref{Thm_CCExamples} (2), there exists a $\GG$-configuration into $\Mon^k$ for some $k < \omega$.  By Example \ref{Expl_ERandGr}, for each $n \ge 2$, there exists a parameter-free $\EE^{\sstar (n^2-1)}$ configuration into $\Mon_1^n$, where $\Mon_1$ is the monster model for the theory of the \fraisse limit of $\GG$.  By Proposition \ref{Prop_ComposeConfig}, there exists an $\LO^{\sstar (n^2-1)}$-configuration into $\Mon^{kn}$.  Therefore,
 \[
  \Rk_\EE(kn) \ge n^2 - 1.
 \]
 By Lemma \ref{Lem_RkKKOrder}, for all $m \ge 2k$, let $n = \lfloor m / k \rfloor$.  Then,
 \[
  \Rk_\EE(m) \ge \Rk_\EE(kn) \ge n^2 - 1 \ge \frac{1}{k^2} m^2 - \frac{2}{k} m.
 \]
 }
 \fi
\end{proof}

Similarly, in any theory with the independence property, so long as $\GG$-rank is finite, $\GG$-rank grows quadratically.

\begin{prop}\label{Prop_GGRankNotAdditive}
 If $T$ is any theory and $\pi$ any partial type with $\Rk_\GG(\pi) \ge 1$, then
 \[
  \Rk_\GG(\pi^{\times n}) \ge n^2 - 1.
 \]
\end{prop}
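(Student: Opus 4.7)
The plan is to mirror the strategy of Proposition \ref{Prop_EERankNotAdditive}: reduce to combining the given $\GG$-configuration into $\pi$ with a parameter-free $\GG^{\sstar (n^2-1)}$-configuration into an $n$-tuple of variables in the theory of the random graph (the \fraisse limit of $\GG$), and then compose via Proposition \ref{Prop_ComposeConfig}. From $\Rk_\GG(\pi) \ge 1$ we extract a $\GG$-configuration into $\pi$. If we can produce a parameter-free $\GG^{\sstar(n^2-1)}$-configuration into $\oy = \oy$ (where $|\oy| = n$) inside the theory of the random graph --- the $\GG$-analog of the random-graph computation Example \ref{Expl_ERandGr} furnishes for $\EE$ --- then Proposition \ref{Prop_ComposeConfig} yields a $\GG^{\sstar(n^2-1)}$-configuration into $\pi^{\times n}$ and hence $\Rk_\GG(\pi^{\times n}) \ge n^2 - 1$, as required.

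To establish the needed auxiliary fact, I would use Lemma \ref{Lem_FiniteConfig}: it suffices to produce, for each finite $A \in \GG^{\sstar (n^2-1)}$, an assignment $g \colon A \to \Gamma^n$ (where $\Gamma$ denotes the random graph) together with a fixed collection of parameter-free formulas $\psi_k(\oy_0, \oy_1)$ for $k < n^2 - 1$ such that $E_k(v, w) \mathiff \Gamma \models \psi_k(g(v), g(w))$ for every $v, w \in A$. Enumerating the $n^2 - 1$ edge relations of $\GG^{\sstar(n^2-1)}$ by pairs $(i, j)$ ranging over $(n \times n) \setminus \{(0,0)\}$, I would choose each $\psi_k$ to be a Boolean combination of atomic predicates of the form $R(\oy_{s, i}, \oy_{t, j})$ for $s, t \in \{0, 1\}$ and $i, j < n$ (allowing both intra-tuple and cross predicates), and define $g$ by viewing $A \times n$ as a finite auxiliary graph --- whose edge set simultaneously codes all $n^2 - 1$ relations --- and embedding that graph into $\Gamma$ via the ultrahomogeneity of the random graph.

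The main obstacle is arranging the $\psi_k$ so that the $n^2 - 1$ coded relations are simultaneously symmetric, irreflexive, and independent, in the sense that every admissible edge pattern on $A$ is realized by some choice of $g$. The naive formulas $R(\oy_{0, i}, \oy_{1, j})$ force $E_{(i,j)} = E_{(j,i)}$ under the required symmetry in $(v, w)$ combined with the symmetry of $R$, producing only $n(n+1)/2$ independent relations --- which matches $n^2 - 1$ only at $n = 2$. For $n \ge 3$, one must combine cross-edge predicates with intra-tuple predicates $R(\oy_{s, i}, \oy_{s, j})$ in a subtler Boolean combination to break this identification and recover the full $n^2 - 1$ degrees of freedom. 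This combinatorial encoding is the technical heart of the argument; once it is in place, the proposition follows immediately by applying Proposition \ref{Prop_ComposeConfig} to compose the two configurations.
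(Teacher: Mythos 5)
Your reduction is exactly the paper's: extract a $\GG$-configuration into $\pi$ from $\Rk_\GG(\pi)\ge 1$, produce a parameter-free $\GG^{\sstar(n^2-1)}$-configuration into $\Mon_1^n$ in the theory of the random graph, and compose via Proposition \ref{Prop_ComposeConfig}. The gap is that you never establish the auxiliary configuration, and you say so yourself (``this combinatorial encoding is the technical heart''). That fact is not something you need to build from scratch: it is precisely Proposition \ref{Prop_KKquad1} applied with $\KK = \GG$ (irreflexive and symmetric), equivalently the lower bound in Example \ref{Expl_GRandGr}, and the paper's proof of the present proposition consists of exactly that citation plus Proposition \ref{Prop_ComposeConfig}. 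As written, your argument proves the proposition only modulo an unproved lemma.

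Moreover, the route you sketch for filling the gap is not the one that works in the paper, and it is not clear it can be made to work. You try to assign to each edge relation $E_{(i,j)}$ a single atomic cross-predicate $R(\oy_{0,i},\oy_{1,j})$, observe that symmetry collapses these to $n(n+1)/2$ degrees of freedom, and propose to recover the rest with intra-tuple predicates $R(\oy_{s,i},\oy_{s,j})$; but an intra-tuple predicate evaluated on $g(v)$ alone cannot distinguish the pair $(v,w)$ from $(v,w')$, so it is far from clear that any Boolean combination of this shape codes $n^2-1$ independent symmetric relations. The paper's Proposition \ref{Prop_KKquad1} avoids the issue entirely: it does not assign a formula to each relation symbol directly, but instead assigns to each quantifier-free $2$-type $p \in \cS_2$ of $\GG^{\sstar(n^2-1)}$ an unordered pair $h(p) = \{G, G^*\}$ of bipartite graphs on $n + n$ vertices, takes $\varphi_i$ to be the disjunction of the complete bipartite-graph descriptions $\varphi_G$ over all $G \in h(p)$ for all $p$ with $p \vdash E_i(x_0,x_1)$, and checks the count
\[
 \left| \cGs^n \right| + \tfrac{1}{2}\left| \cGns^n \right| = 2^{n^2-1} + 2^{\binom{n+1}{2}-1} \ge 2^{n^2-1} \ge |\cS_2|,
\]
so that an injective $h$ exists. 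Passing to unordered pairs $\{G,G^*\}$ is what absorbs the symmetry of $R$ and of $\GG$; no intra-tuple predicates are needed. You should replace your sketch with an appeal to Proposition \ref{Prop_KKquad1}, after which the rest of your argument is correct.
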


\begin{proof}
 This is similar to the proof of Theorem \ref{Thm_NIPLinearQuadLO} (2).
 \if\papermode0
 { \color{violet}
 
  \emph{More Detail:} By Proposition \ref{Prop_KKquad1}, there exists a parameter-free $\GG^{\sstar (n^2 - 1)}$-configuration into $\Mon_1^n$, where $\Mon_1$ is the monster model for the theory of the \fraisse limit of $\GG$.  By assumption, there exists a $\GG$-configuration into $\pi$.  By Proposition \ref{Prop_ComposeConfig}, there exists a $\GG^{\sstar (n^2 - 1)}$-configuration into $\pi^{\times n}$.  
 }
 \fi
\end{proof}

From Theorem \ref{Thm_NIPLinearQuadLO}, Proposition \ref{Prop_EERankNotAdditive}, and Proposition \ref{Prop_GGRankNotAdditive}, we obtain the following result about additivity of ranks: If $T$ has the independence property, $\KK = \EE$, $\LO$, or $\GG$, and $\KK$-rank is finite, then $\KK$-rank is not additive.


\section{Future Work}\label{Sect_Conclusion}

From the study of $\KK$-configurations and $\KK$-rank in the previous sections, we are left with a few open questions.

\begin{ques}
 Under what conditions is $\KK$-rank a generalization of a known rank in model theory?
\end{ques}

In Example \ref{Expl_LONIP}, we establish that $\LO$-rank coincides with op-dimension when $T$ has NIP, which implies that $\LO$-rank coincides with dp-rank when $T$ is distal.  On the other hand, $\LO$-rank diverges from op-dimension when $T$ has the independence property.  Similarly, $\EE$-rank appears to be related to dp-rank, but the exact relationship remains unclear.  Proposition \ref{Prop_EEUpperBound} establishes that dp-rank is an upper bound for $\EE$-rank while Corollary \ref{Cor_EEDpRank} shows that these ranks coincide on $\Mon^n$ when $T$ is dp-minimal.  On the other hand, even in NIP theories, dp-rank and $\EE$-rank diverge, as shown in Example \ref{Expl_EENotDp}.  This example is distal, however, which leads to an interesting question

\begin{ques}
 Do dp-rank and $\EE$-rank coincide for stable theories?
\end{ques}

Along a similar line, when is $\KK$-rank additive (Question \ref{Ques_KKRankAdd})?  We see that $\LO$-rank, and even $\GG$-rank (trivially), are additive when $T$ has NIP.  On the other hand, these ranks fail additivity when moving to theories with the independence property.  Is it possible that, more generally, $\KK$-ranks are additive on NIP theories?  In particular, is $\EE$-rank additive on NIP theories?

Although we examined a few examples of \algtriv \fraisse classes in this paper, there are other classes that are currently unexplored.  We have one result on $\TT$-rank, Example \ref{Expl_FFandGr}, and no results on $\HH_k$-rank for $k > 2$.  It is possible, for example, that $\TT$-rank coincides with $\LO$-rank for some types.  Most of the technology developed in this paper relies on the index language being binary, which makes analyzing $\HH_k$-rank more challenging when $k > 2$.  In future work, we would like to examine $\KK$-rank for these and other classes, $\KK$.

Finally, Section \ref{Sect_DividingLines} and the relationships to \cite{gh} reveal other interesting open questions.  For example, we have the strict $\lesssim$-chain
\[
 \EE < \LO < \GG < \HH_3 < \HH_4 < \dots.
\]
This leads to the following question

\begin{ques}
 Is $\lesssim$ a linear quasi-order on \algtriv \fraisse classes?  In particular, are there any classes strictly between $\EE$ and $\LO$?
\end{ques}

In other words, is there a non-trivial dividing line, in the sense of $\cC_\KK$, below stability?

\appendix


\section{Combinatorial Lemmas}\label{App_CombLemmas}

Fix $k < \omega$ and let
\[
 \cD_k = \{ t \in {}^k \{ -1, 0, 1 \} : t(i) = 1 \text{ for } i \text{ minimal such that } t(i) \neq 0 \}.
\]
For $\oa, \ob \in \omega^k$ and $t \in \cD_k$, define
\[
 \oa \le_t \ob \text{ if, for all } i < k,
 \begin{cases}
  a_i < b_i & \text{ if } t(i) = 1, \\
  a_i = b_i & \text{ if } t(i) = 0, \\
  a_i > b_i & \text{ if } t(i) = -1.
 \end{cases}
\]
Finally, for all $\oa, \ob \in \omega^k$, define
\[
 \oa \le_\lex \ob \text{ if } a_i < b_i \text{ for } i \text{ minimal such that } a_i \neq b_i.
\]
Note that $\oa \le_\lex \ob$ if and only if there exists $t \in \cD_k$ such that $\oa \le_t \ob$.

\begin{lem}\label{Lem_ColorBoxes2}
 For all $k, \ell, m < \omega$, there exists $n < \omega$ such that, for all colorings $c : \binom{n^k}{\le 2} \rightarrow \ell$, there exist $Y_0, \dots, Y_{k-1} \in \binom{n}{m}$ such that, for all $t \in \cD_k$, $c$ is constant on the set
 \[
  X_t = \left\{ \{ \oa, \ob \} : \oa, \ob \in \prod_{i < k} Y_i, \oa \le_t \ob \right\}.
 \]
\end{lem}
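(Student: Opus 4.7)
The plan is to induct on $k$, with the engine at each stage being a ``one type at a time'' application of a product Ramsey theorem. The base case $k = 1$ is immediate since $\cD_1 = \{(+1)\}$ and $X_{(+1)} = \binom{Y_0}{2}$, so the claim reduces to the classical Ramsey theorem for pairs; taking $n = R_\ell(m)$ suffices.

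For the inductive step, I would first observe that, for each fixed type $t \in \cD_k$, the set $X_t$ of pairs of type $t$ in $[n]^k$ has a natural product structure. Specifically, a pair $\{\oa, \ob\} \in X_t$ is determined by choosing, in each coordinate $i < k$ with $t(i) = 0$, a single element of $[n]$ (the common value $a_i = b_i$), and in each coordinate $i$ with $t(i) \neq 0$, a two-element subset of $[n]$ (with the sign of $t(i)$ dictating which endpoint is matched to $\oa$ versus $\ob$). Hence $X_t$ is in bijection with $\prod_{i < k} \binom{[n]}{r_i^t}$, where $r_i^t = 1$ if $t(i) = 0$ and $r_i^t = 2$ otherwise. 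The classical product Ramsey theorem (Graham--Rothschild; see, e.g., Graham, Rothschild, and Spencer, \emph{Ramsey Theory}, Chapter 5) guarantees that for any such product and any $\ell, m$, a sufficiently large $n$ admits subsets $Y_i \in \binom{[n]}{m}$ on whose sub-product $\prod_i \binom{Y_i}{r_i^t}$ the coloring is constant.

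Next, I would enumerate $\cD_k = \{t_0, t_1, \ldots, t_s\}$ and iterate. Starting from $[n]^k$ with $n$ chosen sufficiently large, apply product Ramsey for $t_0$ to obtain sets $Y_i^{(1)} \subseteq [n]$ on whose product $c$ is constant on pairs of type $t_0$. Then, working inside $\prod_i Y_i^{(1)}$, apply product Ramsey for $t_1$ to shrink to $Y_i^{(2)} \subseteq Y_i^{(1)}$ with constancy on type $t_1$. The crucial point is that passing to a sub-product preserves the constancy already achieved for earlier types, since a sub-product of a monochromatic set remains monochromatic. After $s+1$ stages, the resulting $Y_i = Y_i^{(s+1)}$ can be forced to have size $\ge m$ by choosing $m_0 \gg m_1 \gg \cdots \gg m_{s+1} = m$ at the outset, with gaps dictated by the relevant product Ramsey function.

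The main obstacle is the invocation of the product Ramsey theorem for a single type $t$, which is the only nontrivial ingredient. For a self-contained account, one can prove the product Ramsey statement itself by induction on $k$, peeling off one factor at a time and coordinating across fibers by an iterated application of Ramsey's theorem for pairs---essentially the same pattern of reasoning as above, but now internal to the product Ramsey argument rather than to the present lemma. Either way, the bounds on $n$ blow up very quickly (tower-type), but only existence is needed.
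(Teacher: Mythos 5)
Your proposal is correct, but it takes a genuinely different route from the paper's proof. The paper gives a self-contained induction on $k$ that peels off the last coordinate: it first applies the inductive hypothesis to the first $k$ coordinates with the color set enriched to ${}^{(n')^2}\ell$ (recording the color of $\{\oa\concat i, \ob\concat j\}$ for every pair $(i,j)$ of last coordinates), and then applies ordinary Ramsey's theorem to the last coordinate with the color set enriched to ${}^{\cD_k \times \{-1,1\}}\ell$, so that all types and both ``orientations'' of the new coordinate are homogenized in one pass. You instead observe that for each fixed $t$ the set $X_t$ is in bijection with a product $\prod_{i<k}\binom{[n]}{r_i^t}$ with $r_i^t \in \{1,2\}$, invoke the Graham--Rothschild product Ramsey theorem once per type, and intersect over the finitely many $t \in \cD_k$; the observation that constancy survives passing to sub-products is exactly what makes the iteration sound, and the bookkeeping $m_0 \gg m_1 \gg \cdots \gg m$ handles the sizes. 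Your argument is shorter if product Ramsey is taken off the shelf (and, as you note, proving product Ramsey from scratch reproduces essentially the paper's coordinate-peeling pattern); the paper's version is self-contained from Ramsey for pairs. Note also that the induction on $k$ you announce at the outset is never actually used at the level of the lemma---the work is done type by type by product Ramsey---so the framing could be simplified.

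One small slip: $\cD_1 = \{\mathbf{0}, (+1)\}$, not $\{(+1)\}$. The identically zero tuple belongs to $\cD_k$ (the defining condition is vacuous for it), and it indexes the singletons---this is precisely why the coloring is on $\binom{n^k}{\le 2}$ rather than $\binom{n^k}{2}$. So the base case needs constancy on $\binom{Y_0}{1}$ as well as on $\binom{Y_0}{2}$, i.e., a pigeonhole step on top of Ramsey for pairs (as the paper does). This is harmless for your argument, since your general step enumerates all of $\cD_k$ and the zero type is just the degenerate product Ramsey instance with all $r_i^t = 1$, but the base case as written omits it.
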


\begin{proof}
 By induction on $k$.  Let $k = 1$ and fix $\ell, m < \omega$.  By Ramsey's Theorem, there exists $n$ such that, for all colorings $c : \binom{n}{\le 2} \rightarrow \ell$, there exists $Y \in \binom{n}{m}$ such that $c$ is constant on $\binom{Y}{1}$ and $c$ is constant on $\binom{Y}{2}$.  Since $X_0 = \binom{Y}{1}$ and $X_{1} = \binom{Y}{2}$, this is the desired conclusion.
 
 Fix $k, m, \ell < \omega$.  Let
 \[
  \ell' = {}^{\cD_k \times \{ -1, 1 \}} \ell.
 \]
 By Ramsey's Theorem, there exists $n' < \omega$ such that, for all colorings $c' : \binom{n'}{\le 2} \rightarrow \ell'$, there exists $Y_k \in \binom{n'}{m}$ such that $c'$ is constant on $\binom{Y_k}{1}$ and $c'$ is constant on $\binom{Y_k}{2}$.  Let
 \[
  \ell'' = {}^{(n')^2} \ell.
 \]
 By the inductive hypothesis, there exists $n'' < \omega$ such that, for all colorings $c'' : \binom{(n'')^k}{\le 2} \rightarrow \ell''$, there exist $Y_0, \dots, Y_{k-1} \in \binom{n''}{m}$ such that, for all $t \in \cD_k$, $c''$ is constant on $X_t$.  Let $n = \max \{ n', n'' \}$.
 
 Fix a coloring $c : \binom{n^{k+1}}{\le 2} \rightarrow \ell$.  This induces a coloring $c'' : \binom{(n'')^k}{\le 2} \rightarrow \ell''$ given by: for each $\oa, \ob \in (n'')^k$ with $\oa \le_\lex \ob$, for each $i, j \in n'$, let
 \[
  c''( \{ \oa, \ob \} )(i, j) = c( \{ \oa \concat i, \ob \concat j \} ).
 \]
 
 Thus, there exist $Y_0, \dots, Y_{k-1} \in \binom{n''}{m}$ such that, for all $t \in \cD_k$, $c''$ is constant on $X_t$.  Now define $c' : \binom{n'}{\le 2} \rightarrow \ell'$ as follows: for each $i \le j < n'$, $t \in \cD_k$, and $s \in \{ -1, 1 \}$, choose $\oa, \ob \in \prod_{i < k} Y_i$ with $\oa \le_t \ob$ and set
 \[
  c'( \{ i, j \} )(t,s) = \begin{cases} c( \{ \oa \concat i, \ob \concat j \} ) & \text{ if } s = 1, \\ c( \{ \oa \concat j, \ob \concat i \} ) & \text{ if } s = -1. \end{cases}
 \]
 Since $c''$ is constant on $X_t$ for each $t$, this function is independent of the choice of $\oa$ and $\ob$.  Thus, there exists $Y_k \in \binom{n'}{m}$ such that $c'$ is constant on $\binom{Y_k}{1}$ and $c'$ is constant on $\binom{Y_k}{2}$.  We claim that $Y_0, \dots, Y_k$ work for $c$.
 
 Fix $t \in \cD_{k+1}$.  If $t(k)=0$, let
 \[
  r = c'( \{ i \} )(t |_k, 1 )
 \]
 for any choice of $i \in Y_k$.  Since $c'$ is constant on $\binom{Y_k}{1}$, this is independent of the choice of $i$.  If $t(k) \neq 0$, let
 \[
  r = c'( \{ i, j \})(t |_k, t(k))
 \]
 for any choice of $i, j \in Y_k$ with $i < j$.  Since $c'$ is constant on $\binom{Y_k}{2}$, this is independent of the choice of $i$ and $j$.  Then, for any $\oa, \ob \in \prod_{i \le k} Y_i$ such that $\oa \le_t \ob$, we have that
 \[
  c( \{ \oa, \ob \} ) = r.
 \]
 This is what we wanted to prove.
\end{proof}

\begin{cor}\label{Cor_ColorBoxes}
 For all $k, \ell, m < \omega$, there exists $n < \omega$ such that, for all colorings $c : n^k \rightarrow \ell$, there exist $Y_0, \dots, Y_{k-1} \in \binom{n}{m}$ such that $c$ is constant on $\prod_{i < k} Y_i$.
\end{cor}

\begin{proof}
 Since any coloring $c : n^k \rightarrow \ell$ can be extended arbitrarily to a coloring $c : \binom{n^k}{\le 2} \rightarrow \ell$, this follows immediately from Lemma \ref{Lem_ColorBoxes2}.
 \if\papermode0
 { \color{violet}
 
  \emph{More Detail:} We prove this by induction on $k$.  When $k = 1$, let $n = (m-1) \ell + 1$ and the conclusion follows by Pigeonhole Principle.
 
 Assume this holds for $k$ and $n$.  Let $n' = (m-1) \ell^{n^k} + 1$.  Fix a coloring $c : (n')^{k+1} \rightarrow \ell$.  This gives a coloring $c^* : n' \rightarrow {}^{n^k} \ell$ via $c^*(i)(\oj) = c(i,\oj)$.  By Pigeonhole principle, there exists $Y_k \in \binom{n'}{m}$ such that $c^*$ is constant on $Y_k$.  Say that the constant value is $s$.  Then, $s : n^k \rightarrow \ell$ is a coloring.  By the induction hypothesis, there exist $Y_0, \dots, Y_{k-1} \in \binom{n}{m}$ such that $s$ is constant on $\prod_{i < k} Y_i$.  Therefore, $c$ is constant on $\prod_{i \le k} Y_i$.
 }
 \fi
\end{proof}

\begin{bibdiv}
\begin{biblist}

\bib{bod1}{article}{
  author = {Bodirsky, Manuel},
  year = {2012},
  pages = {},
  title = {New Ramsey Classes from Old},
  volume = {21},
  journal = {Electronic Journal of Combinatorics},
  doi = {10.37236/2566}
}

\bib{bod2}{article}{
  author = {Bodirsky, Manuel},
  year = {2015},
  pages = {},
  title = {Ramsey Classes: Examples and Constructions},
  doi = {10.1017/CBO9781316106853.002}
}

\bib{cameron_1990}{book}{
 series={London Mathematical Society Lecture Note Series},
 title={Oligomorphic Permutation Groups},
 publisher={Cambridge University Press},
 author={Cameron, Peter J.},
 year={1990}
}

\bib{cpt}{article}{
 author={Chernikov, Artem},
 author={Palacin, Daniel},
 author={Takeuchi, Kota},
 title={On n-Dependence},
 journal={Notre Dame J. Formal Logic},
 volume={60},
 number={2},
 date={2019},
 pages={195-214}
}

\bib{ezs}{article}{
  author = {M. El-Zahar},
  author = {N.W. Sauer},
  title = {Ramsey-type properties of relational structures},
  journal = {Discrete Mathematics},
  volume = {94},
  number = {1},
  pages = {1-10},
  year = {1991},
}

\bib{gh2}{article}{
  author={Guingona, Vince}, 
  author={Hill, Cameron D.},
  title={On a common generalization of Shelah's 2-rank, dp-rank, and o-minimal dimension},
  journal={Ann. Pure Appl. Log},
  volume={166},
  number={4},
  pages={502-525},
  date={2015}
}

\bib{gh}{article}{
  author={Guingona, Vince},
  author={Hill, Cameron D.},
  title={On positive local combinatorial dividing-lines in model theory},
  journal={Arch. Math. Logic},
  volume={58},
  pages={289-–323},
  date={2019}
}

\bib{ghs}{article}{
  author={Guingona, Vince}, 
  author={Hill, Cameron D.},
  author={Scow, Lynn},
  title={Characterizing model-theoretic dividing lines via collapse of generalized indiscernibles},
  journal={Ann. Pure Appl. Log},
  volume={168},
  number={5},
  pages={1091-1111},
  date={2017}
}

\bib{littlehodges}{book}{
 title={A Shorter Model Theory},
 publisher={Cambridge University Press},
 author={Hodges, Wilfred},
 year={1997}
}

\bib{hru}{article}{
  author = {Hrushovski, Ehud},
  year = {2006},
  month = {03},
  pages = {173-198},
  title = {Groupoids, imaginaries and internal covers},
  volume = {36},
  journal = {Turkish Journal of Mathematics}
}

\bib{kou}{article}{
   author={Kaplan, Itay},
	 author={Onshuus, Alf},
	 author={Usvyatsov, Alexander},
	 title={Additivity of the dp-rank},
	 journal={Trans. Amer. Math. Soc.},
	 volume={365},
	 date={2013},
	 pages={5783-5804}
}

\bib{l92}{article}{
   author={Laskowski, M.C.},
	 title={Vapnik-Chervonenkis classes of definable sets},
	 journal={J. London Math. Soc.},
	 volume={45},
	 number={2},
	 date={1992},
	 pages={377–384}
}

\bib{ls03}{article}{
  author = {Laskowski, M.C.},
  author = {Shelah, S.},
  year = {2003},
  pages = {263-283},
  title = {Karp complexity and classes with the independence property},
  volume = {120},
  journal = {Annals of Pure and Applied Logic}
}

\bib{sauer2020}{article}{
	author = {Sauer, N.W.},
	year = {2020},
	title = {Colouring homogeneous structures},
	note = {https://arxiv.org/abs/2008.02375}
}

\bib{scow14}{article}{
   author = {Scow, L.},
   title = {Indiscernibles, EM-Types, and Ramsey Classes of Trees},
   volume = {56},
   journal = {Notre Dame J. Formal Logic},
   number = {3},
   pages = {429 -- 447},
   year = {2015}
}

\bib{sim}{article}{
  author={Simon, P.},
  title={Linear orders in NIP structures},
  journal = {Advances in Mathematics},
  volume={395},
  pages={108069},
  year={2022}
}

\bib{wals}{article}{
  author={Walsburg, E.},
  title={Notes on trace equivalence},
  note={https://arxiv.org/abs/2101.12194}
}

\end{biblist}
\end{bibdiv}

\end{document}